\newtheorem{thm}{Theorem}[section]
\newtheorem*{thm*}{Theorem}
\newtheorem{cor}[thm]{Corollary}
\newtheorem{lem}[thm]{Lemma}
\newtheorem*{lem*}{Lemma}
\newtheorem{mainthm}{Theorem}
\newtheorem*{mainthm*}{Theorem}
\newtheorem{maincor}[mainthm]{Corollary}
\newtheorem{prop}[thm]{Proposition}
\theoremstyle{definition}
\newtheorem*{case*}{Case}
\newtheorem{defn}[thm]{Definition}
\newtheorem*{defn*}{Definition}
\newtheorem*{exmp*}{Example}
\renewcommand{\thestep}{}
\theoremstyle{remark}
\renewcommand{\thecase}{}
\newtheorem{rmk}[thm]{Remark}
\newtheorem*{rmk*}{Remark}
\def\alphenumi{
  \def\theenumi{\alph{enumi}}
  \def\p@enumi{\theenumi}
  \def\labelenumi{(\@alph\c@enumi)}}
\def\thecase{\@arabic\c@case}
\def\thestep{\@arabic\c@step}
\newcommand{\transv}{\mathrel{\text{\tpitchfork}}}
\newcommand{\tpitchfork}{%
  \vbox{
    \baselineskip\z@skip
    \lineskip-.52ex
    \lineskiplimit\maxdimen
    \m@th
    \ialign{##\crcr\hidewidth\smash{$-$}\hidewidth\crcr$\pitchfork$\crcr}
  }%
}
\DeclareFontFamily{U}{mathx}{\hyphenchar\font45}
\DeclareFontShape{U}{mathx}{m}{n}{
      <5> <6> <7> <8> <9> <10>
      <10.95> <12> <14.4> <17.28> <20.74> <24.88>
      mathx10
      }{}
\DeclareSymbolFont{mathx}{U}{mathx}{m}{n}
\DeclareMathAccent{\widecheck}{0}{mathx}{"71}
\DeclareMathAccent{\wideparen}{0}{mathx}{"75}
\def\hhmm{\number\hh:\ifnum\mm<10{}0\fi\number\mm}
\let\oldmarginpar\marginpar
\renewcommand\marginpar[1]{\-\oldmarginpar[\raggedleft\footnotesize #1]%
{\raggedright\footnotesize #1}}
\newcommand\CC{\mathbb{C}}
\newcommand\NN{\mathbb{N}}
\newcommand\RR{\mathbb{R}}
\newcommand\ZZ{\mathbb{Z}}
\newcommand\fg{{\mathfrak{g}}}
\newcommand\ft{{\mathfrak{t}}}
\newcommand\sA{{\mathscr{A}}}
\newcommand\sB{{\mathscr{B}}}
\newcommand\sC{{\mathscr{C}}}
\newcommand\sE{{\mathscr{E}}}
\newcommand\sG{{\mathscr{G}}}
\newcommand\sH{{\mathscr{H}}}
\newcommand\sI{{\mathscr{I}}}
\newcommand\sK{{\mathscr{K}}}
\newcommand\sL{{\mathscr{L}}}
\newcommand\sM{{\mathscr{M}}}
\newcommand\sN{{\mathscr{N}}}
\newcommand\sR{{\mathscr{R}}}
\newcommand\sU{{\mathscr{U}}}
\newcommand\sV{{\mathscr{V}}}
\newcommand\sX{{\mathscr{X}}}
\newcommand\sY{{\mathscr{Y}}}
\newcommand\bH{{\mathbf{H}}}
\newcommand\eps{\varepsilon}
\newcommand\SO{\operatorname{SO}}
\newcommand\SU{\operatorname{SU}}
\newcommand\U{\operatorname{U}}
\newcommand\less{\setminus}
\newcommand\ad{{\operatorname{ad}}}
\newcommand\Ad{{\operatorname{Ad}}}
\newcommand\Aut{\operatorname{Aut}}
\newcommand\Coker{\operatorname{Coker}}
\DeclareMathOperator{\Crit}{Crit}
\newcommand\dist{\operatorname{dist}}
\newcommand\End{\operatorname{End}}
\DeclareMathOperator{\genus}{genus}
\newcommand\grad{\operatorname{grad}}
\newcommand\Hess{\operatorname{Hess}}
\newcommand\Hom{\operatorname{Hom}}
\DeclareMathOperator{\Inj}{Inj}
\newcommand\Ker{\operatorname{Ker}}
\newcommand\Ran{\operatorname{Ran}}
\newcommand\Riem{\operatorname{Riem}}
\newcommand\supp{\operatorname{supp}}
\newcommand\vol{\operatorname{vol}}
\DeclareMathOperator{\YM}{\mathscr{Y}\!\!\mathscr{M}}
\newcommand\apriori{{\emph{a priori }}}
\newcommand\id{{\mathrm{id}}}
\newcommand\loc{{\mathrm{loc}}}
\newcommand\mutatis{{\emph{mutatis mutandis }}}
\numberwithin{equation}{section}
\numberwithin{figure}{section}
\begin{document}

\title[Optimal {\L}ojasiewicz--Simon Inequalities]{Optimal {\L}ojasiewicz--Simon Inequalities and Morse--Bott Yang--Mills Energy Functions}

\author[Paul M. N. Feehan]{Paul M. N. Feehan}
\address{Department of Mathematics, Rutgers, The State University of New Jersey, 110 Frelinghuysen Road, Piscataway, NJ 08854-8019, United States of America}
\email{feehan@math.rutgers.edu}

%COMMENT Remove hours and minutes for arXiv and journal versions and fix date
%\date{\today{ }\hhmm}
\date{November 16, 2020}

\begin{abstract}
For any compact Lie group $G$ and closed, smooth Riemannian manifold $(X,g)$ of dimension $d\geq 2$, we extend a result due to Uhlenbeck (1985) that gives existence of a flat connection on a principal $G$-bundle over $X$ supporting a connection with $L^p$-small curvature, when $p>d/2$, to the case of a connection with $L^{d/2}$-small curvature. We prove an optimal {\L}ojasiewicz--Simon gradient inequality for abstract Morse--Bott functions on Banach manifolds, generalizing an earlier result due to the author and Maridakis (2019), principally by removing the hypothesis that the Hessian operator be Fredholm with index zero. We apply this result to prove the optimal {\L}ojasiewicz--Simon gradient inequality for the self-dual Yang--Mills energy function near regular anti-self-dual connections over closed Riemannian four-manifolds and for the full Yang--Mills energy function over closed Riemannian manifolds of dimension $d \geq 2$, when known to be Morse--Bott at a given Yang--Mills connection. We also prove the optimal {\L}ojasiewicz--Simon gradient inequality by direct analysis near a given flat connection that is a regular point of the curvature map. 
\end{abstract}

\subjclass[2010]{Primary 58E15, 57R57; secondary 37D15, 58D27, 70S15}

% AMS keywords (used in AMS journals)
\keywords{Flat connections, gauge theory, infinite-dimensional Morse theory, {\L}ojasiewicz inequalities, {\L}ojasiewicz--Simon inequalities, Morse--Bott functions, Yang--Mills connections.}

% Acknowledge support
\thanks{The author was partially supported by National Science Foundation grant DMS-1510064, the Simons Center for Geometry and Physics, Stony Brook, the Dublin Institute for Advanced Studies, and the Institut des Hautes {\'E}tudes Scientifiques, Bures-sur-Yvette, during the preparation of this article.}

\maketitle
\tableofcontents

\section{Introduction}
\label{sec:Introduction}
Since its discovery by {\L}ojasiewicz in the context of analytic functions on Euclidean spaces\footnote{The first page number refers to the version of {\L}ojasiewicz's original manuscript mimeographed by IHES while the page number in parentheses refers to the cited LaTeX version of his manuscript prepared by M. Coste and available on the Internet.} \cite[Proposition 1, p. 92 (67)]{Lojasiewicz_1965} and subsequent generalization by Simon to a class of analytic functions on certain H\"older spaces \cite[Theorem 3]{Simon_1983}, the \emph{{\L}ojasiewicz--Simon gradient inequality} has played a significant role in analyzing questions such as
\begin{inparaenum}[\itshape a\upshape)]
\item global existence, convergence, and analysis of singularities for solutions to nonlinear evolution equations that are realizable as gradient-like systems for an energy function,
\item uniqueness of tangent cones, and
\item energy gaps and discreteness of energies.
\end{inparaenum}
For a survey of applications of the {\L}ojasiewicz--Simon gradient inequality to gradient flows for real analytic functions on Banach spaces, including energy functions arising in applied mathematics, geometric analysis, or mathematical physics, we refer the reader to our article \cite{Feehan_Maridakis_Lojasiewicz-Simon_Banach} and monograph \cite{Feehan_yang_mills_gradient_flow_v4}.

In this article, which complements \cite{Feehan_yangmillsenergygap, Feehan_yangmillsenergygapflat_aim}, we establish optimal gradient inequalities of {\L}ojasiewicz--Simon type for the Yang--Mills and self-dual Yang--Mills energy functions and for $C^2$ functions on Banach spaces that are Morse--Bott near a critical point. These inequalities are proved by direct analysis and, in particular, none are proved by reduction to a {\L}ojasiewicz gradient inequality that is known to hold in finite dimensions. Optimal gradient inequalities (exponent $1/2$) are important because they imply that the gradient flow converges at an exponential (rather than power law) rate to the critical set \cite{Feehan_yang_mills_gradient_flow_v4}.

Suppose now that $G$ is a compact Lie group $G$ and $(X,g)$ is a closed, smooth Riemannian manifold of dimension $d\geq 2$. Our first main result, Theorem \ref{mainthm:Uhlenbeck_Chern_corollary_4-3_1_lessthan_p_lessthan_d}, extends a result due to Uhlenbeck \cite[Corollary 4.3]{UhlChern}, which gives existence of a flat connection on a principal $G$-bundle supporting a $W^{1,p}$ connection with $L^p$-small curvature, for $p>d/2$, to the case of a $W^{1,p}$ connection with $L^{d/2}$-small curvature. 

Next, we establish an optimal {\L}ojasiewicz gradient inequality (see Theorem \ref{mainthm:Lojasiewicz-Simon_gradient_inequality_Morse-Bott}) for abstract Morse--Bott functions on Banach manifolds, generalizing an earlier result due to the author and Maridakis \cite[Theorem 4]{Feehan_Maridakis_Lojasiewicz-Simon_Banach}. Our proof of Theorem \ref{mainthm:Lojasiewicz-Simon_gradient_inequality_Morse-Bott} is simpler than that of \cite[Theorem 4]{Feehan_Maridakis_Lojasiewicz-Simon_Banach} and, moreover, allows us to drop the requirement that the Hessian operator is Fredholm with index zero.

We apply Theorem \ref{mainthm:Lojasiewicz-Simon_gradient_inequality_Morse-Bott} to prove that the Yang--Mills energy function obeys the optimal {\L}ojasiewicz gradient inequality given by Theorem \ref{mainthm:Lojasiewicz-Simon_inequalities_Yang-Mills_energy_flat_Morse-Bott} when one restricts to a neighborhood of a flat connection $\Gamma$ that is a regular point of the curvature map, $A \mapsto F_A$, and hence that the Yang--Mills energy function is Morse--Bott near $\Gamma$. We also prove Theorem \ref{mainthm:Lojasiewicz-Simon_inequalities_Yang-Mills_energy_flat_Morse-Bott} by direct analysis without appealing to Theorem \ref{mainthm:Lojasiewicz-Simon_gradient_inequality_Morse-Bott}. 

We prove an optimal {\L}ojasiewicz gradient inequality for the self-dual Yang--Mills energy function near anti-self-dual connections, over closed Riemannian four-manifolds, that are regular points of the self-dual curvature map, $A \mapsto F_A^+$ (see Theorem \ref{mainthm:Optimal_Lojasiewicz-Simon_inequalities_self-dual_Yang-Mills_energy_function}). Finally, we prove an optimal {\L}ojasiewicz gradient inequality for the full Yang--Mills energy function over closed Riemannian manifolds of dimension $d \geq 2$, when known to be Morse--Bott at a given Yang--Mills connection (see Theorem \ref{mainthm:Lojasiewicz-Simon_inequalities_Yang-Mills_energy_Morse-Bott}).

Throughout this article, our conventions and notation are consistent with those of its two predecessors \cite{Feehan_yangmillsenergygap, Feehan_yangmillsenergygapflat_aim} and generally follow those of standard references such as Donaldson and Kronheimer \cite{DK}, Freed and Uhlenbeck \cite{FU}, and Friedman and Morgan \cite{FrM}. We shall not repeat those explanations here but we include a brief summary of our conventions and notation in Section \ref{subsec:Notation} for ease of reference.

\subsection{Existence of a flat connection in the case of critical Sobolev exponents}
Our first main result is a generalization, Theorem \ref{mainthm:Uhlenbeck_Chern_corollary_4-3_1_lessthan_p_lessthan_d} below, of part of Uhlenbeck's \cite[Corollary 4.3]{UhlChern} from the non-borderline case, $d/2<p<d$ and $L^p$-small curvature $F_A$, to $1<p<d$ and the \emph{borderline} case of $L^{d/2}$-small curvature. The relevant part of Uhlenbeck's \cite[Corollary 4.3]{UhlChern} is quoted in this article as Theorem \ref{thm:Uhlenbeck_Chern_corollary_4-3}. 

\begin{mainthm}[Existence of a flat connection on a principal bundle supporting a $W^{1,q}$ connection with $L^{d/2}$-small curvature, Coulomb gauge transformation, and Sobolev distance estimate]
\label{mainthm:Uhlenbeck_Chern_corollary_4-3_1_lessthan_p_lessthan_d}
Let $(X,g)$ be a closed, smooth Riemannian manifold of dimension $d\geq 2$, and $G$ be a compact Lie group, and $q \in (d/2, \infty]$ and $s_0 > 1$ be constants. Then there is a constant $\eps=\eps(g,G,s_0) \in (0,1]$ with the following significance. If $A$ is a $W^{1,q}$ connection on a smooth principal $G$-bundle $P$ over $X$ such that\footnote{We may choose $s_0>1$ arbitrarily close to $1$ when $d=2$ and, in particular, small enough that $|F_A| \in L^{s_0}(X;\RR)$.}
\begin{equation}
\label{eq:mainCurvature_Ldover2_small}
\|F_A\|_{L^{s_0}(X)} \leq \eps,
\end{equation}
where $s_0 = d/2$ when $d\geq 3$ or $s_0 > 1$ when $d=2$, then there is a $W^{1,q}$ flat connection $\Gamma$ on $P$. If $d\geq 3$ or $q \geq 4/3$ when $d=2$, then\footnote{By Wehrheim \cite[Theorem 9.4 (i)]{Wehrheim_2004}.} there is a $W^{2,q}$ gauge transformation $u$ of $P$ such that $u(\Gamma)$ is $C^\infty$.
\end{mainthm}

Theorem \ref{mainthm:Uhlenbeck_Chern_corollary_4-3_1_lessthan_p_lessthan_d} is most striking and useful in applications to Morse theory for the Yang--Mills energy function \eqref{eq:Yang-Mills_energy_function} in dimension $d=4$, in which case the condition \eqref{eq:mainCurvature_Ldover2_small} is equivalent to a requirement that the energy
\[
\YM(A) = \frac{1}{2}\int_X |F_A|^2\,d\vol_g
\]
be sufficiently small. We prove Theorem \ref{mainthm:Uhlenbeck_Chern_corollary_4-3_1_lessthan_p_lessthan_d} in Section \ref{sec:Existence_flat_connection_critical_exponents_Coulomb_distance}.

\begin{rmk}[Existence of flat connections in the case of borderline control over curvature]
\label{rmk:Existence_flat_connection_borderline_curvature_small}
The novel aspect of Theorem \ref{mainthm:Uhlenbeck_Chern_corollary_4-3_1_lessthan_p_lessthan_d} is the sufficiency (when $d \geq 3$) of the borderline hypothesis $\|F_A\|_{L^{d/2}(X)} \leq \eps$ in \eqref{eq:mainCurvature_Ldover2_small} to provide existence of a flat connection $\Gamma$ on the \emph{same} principal $G$-bundle $P$ as that supporting the connection $A$ with $L^{d/2}$-small curvature. The well-known argument due to Sedlacek \cite{Sedlacek} when $d=4$ would produce a flat connection $\Gamma$ on a possibly different principal $G$-bundle $Q$ but the classification of principal $G$-bundles, knowledge of the vector Pontrjagin classes, and the behavior of Sedlacek's obstruction class under weak limits ensures that $Q \cong P$ as continuous principal $G$-bundles. However, this is not how we prove Theorem \ref{mainthm:Uhlenbeck_Chern_corollary_4-3_1_lessthan_p_lessthan_d}. See the Introduction to Section \ref{subsec:Uhlenbeck_existence_flat_connection_and_apriori_estimate} for a discussion of this approach and further details. 

Instead, recall that Uhlenbeck's \cite[Theorem 1.3 or Theorem 2.1 and Corollary 2.2]{UhlLp} gives existence of local Coulomb gauges and \apriori estimates for local connection one-forms with $L^{d/2}$-small curvature. An application of her \cite[Theorem 1.3 or Theorem 2.1 and Corollary 2.2]{UhlLp} to a minimizing sequence of connections yields $W^{1,d/2}$ convergence of local connection one-forms and $W^{2,d/2}$ convergence of local gauge transformations. The Sobolev Embedding \cite[Theorem 4.12]{AdamsFournier} implies that $W^{2,p}(X;\RR) \subset C^0(X;\RR)$ is a continuous embedding when $p>d/2$ but not when $p=d/2$ and thus Uhlenbeck's patching arguments do not appear applicable at first glance. However, as we explain in Sections \ref{subsec:Continuous_principal_bundles} and \ref{subsec:Uhlenbeck_Chern_corollary_4-3_existence_flat_connection_critical_exponent}, the fact that the gauge-transformed local connection one-forms obey a Coulomb gauge condition is sufficient to give us $W^{2,p}$ and thus $C^0$ control over local gauge transformations with $p>d/2$ and this directly yields the isomorphism $Q \cong P$, without appeal to the classification of principal $G$-bundles --- see Theorems \ref{thm:Uhlenbeck_Lp_bound_3-2_Sobolev_Ld_small_connection_oneforms} and
\ref{thm:Uhlenbeck_Chern_corollary_4-3_critical_existence}. Partly related results were proved by Taubes \cite[Proposition 4.5 and Lemma A.1]{TauPath} when $d=4$, using a more difficult method, and by Riv{\`e}re \cite[Theorem IV.1]{Riviere_2002} when $d \geq 4$, using Lorentz spaces rather than the standard Sobolev spaces that we employ throughout this article. See Remark \ref{rmk:Related_results_Rivere_Taubes} for further discussion of the results due to Riv{\`e}re and Taubes and Remark \ref{rmk:Related_results_Isobe_Shevchishin} for a discussion of related results due to Isobe \cite{Isobe_2009} and Shevchishin \cite{Shevchishin_2002}.
\end{rmk}

\subsection{{\L}ojasiewicz--Simon gradient inequalities for Morse--Bott functions}
\label{subsec:Lojasiewicz-Simon_gradient_inequality_Morse-Bott}
In applications to geometry and topology, it is very useful to know when a given energy function is a Morse function (isolated critical points) or more generally a Morse--Bott function (non-isolated critical points). 

\begin{defn}[Morse--Bott function]
\label{defn:Morse-Bott_function}
(See Austin and Braam \cite[Section 3.1]{Austin_Braam_1995}.)
Let $\sB$ be a smooth Banach manifold, $\sE: \sB \to \RR$ be a $C^2$ function, and $\Crit\sE := \{x\in\sB:\sE'(x) = 0\}$. A smooth submanifold $\sC \hookrightarrow \sB$ is called a \emph{nondegenerate critical submanifold of $\sE$} if $\sC \subset \Crit\sE$ and
\begin{equation}
\label{eq:Nondegenerate_critical_submanifold}
(T\sC)_x = \Ker \sE''(x), \quad\forall\,x\in \sC,
\end{equation}
where $\sE''(x):(T\sB)_x \to (T\sB)_x^*$ is the Hessian of $\sE$ at the point $x \in \sC$. One calls $\sE$ a \emph{Morse--Bott function} if its critical set $\Crit\sE$ consists of nondegenerate critical submanifolds.

We say that a $C^2$ function $\sE:\sB\to\RR$ is \emph{Morse--Bott at a point $x_0 \in \sB$} if there is an open neighborhood $\sU\subset\sB$ of $x_0$ such that $\sU\cap\Crit\sE$ is a relatively open, smooth submanifold of $\sB$ and \eqref{eq:Nondegenerate_critical_submanifold} holds at $x_0$.
\end{defn}

In Definition \ref{defn:Morse-Bott_function}, if we had only assumed that $\sC \hookrightarrow \sB$ is a smooth submanifold with $\sC \subset \Crit\sE$, we would still have the inclusion,
\[
(T\sC)_x \subset \Ker \sE''(x),
\]
for each $x\in \sC$. Hence, the key assertion in \eqref{eq:Nondegenerate_critical_submanifold} is that \emph{equality} holds and thus each vector $v \in (T\sB)_x\cap\Ker \sE''(x)$ is \emph{integrable}, the tangent vector to a smooth path in $\sC$ through $x$.

Definition \ref{defn:Morse-Bott_function} is a restatement of definitions of a Morse--Bott function on a finite-dimensional manifold, but we omit the condition that $\sC$ be compact and connected as in Nicolaescu \cite[Definition 2.41]{Nicolaescu_morse_theory} or the condition that $\sC$ be compact in Bott \cite[Definition, p. 248]{Bott_1954}. Note that if $\sB$ is a Riemannian manifold and $\sN$ is the normal bundle of $\sC \hookrightarrow \sB$, so $\sN_x = (T\sC)_x^\perp$ for all $x \in \sC$, where $(T\sC)_x^\perp$ is the orthogonal complement of $(T\sC)_x$ in $(T\sB)_x$, then \eqref{eq:Nondegenerate_critical_submanifold} is equivalent to the assertion that the restriction of the Hessian to the fibers of the normal bundle of $\sC$,
\[
\sE''(x):\sN_x \to (T\sB)_x^*,
\]
is \emph{injective} for all $x \in \sC$; using the Riemannian metric on $\sB$ to identify $(T\sB)_x^* \cong (T\sB)_x$, we see that $\sE''(x):\sN_x \cong \sN_x$ is an isomorphism for all $x \in \sC$. In other words, the condition \eqref{eq:Nondegenerate_critical_submanifold} is equivalent to the assertion that the Hessian of $\sE$ is an isomorphism of the normal bundle $\sN$ when $\sB$ has a Riemannian metric.

For a development of Morse--Bott theory and a discussion of and references to its numerous applications, we refer to Austin and Braam \cite{Austin_Braam_1995}, Banyaga and Hurtubise \cite{Banyaga_Hurtubise_lectures_morse_homology, Banyaga_Hurtubise_2009, Banyaga_Hurtubise_2010, Banyaga_Hurtubise_2013}, Nicolaescu \cite{Nicolaescu_morse_theory}, and references cited therein.

\begin{defn}[Gradient map]
\label{defn:Huang_2-1-1}
(See Berger \cite[Section 2.5]{Berger_1977}, Huang \cite[Definition 2.1.1]{Huang_2006}.)
Let $\sU\subset \sX$ be an open subset of a Banach space $\sX$ and let $\sY$ be a Banach space with continuous embedding $\sY \subseteqq \sX^*$. A continuous map $\sM:\sU\to \sY$ is called a \emph{gradient map} if there exists a $C^1$ function $\sE:\sU\to\RR$ such that
\begin{equation}
\label{eq:Differential_and_gradient_maps}
\sE'(x)v = \langle v,\sM(x)\rangle_{\sX\times\sX^*}, \quad \forall\, x \in \sU, \quad v \in \sX,
\end{equation}
where $\langle \cdot , \cdot \rangle_{\sX\times\sX^*}$ is the canonical bilinear form on $\sX\times\sX^*$. The real-valued function $\sE$ is called a \emph{potential} for the gradient map $\sM$.
\end{defn}

When $\sY = \sX^*$ in Definition \ref{defn:Huang_2-1-1}, then the differential and gradient maps coincide.

\begin{mainthm}[{\L}ojasiewicz--Simon gradient inequality for $C^2$ Morse--Bott functions on Banach spaces]
\label{mainthm:Lojasiewicz-Simon_gradient_inequality_Morse-Bott}
(Compare Feehan and Maridakis \cite[Theorems 3 and 4]{Feehan_Maridakis_Lojasiewicz-Simon_Banach}.)
Let $\sX$, $\sY$, $\sG$, and $\sH$ be Banach spaces with continuous embeddings,
\[
\sX\subset \sG \quad\text{and}\quad \sY \subset \sH \subset \sG^* \subset \sX^*.
\]
Let $\sU \subset \sX$ be an open subset, $\sE:\sU\to\RR$ be a $C^2$ function, and $x_\infty\in\sU$ be a critical point of $\sE$, so $\sE'(x_\infty) = 0$. Let $\sM:\sU\to\sY$ be a $C^1$ gradient map for $\sE$ in the sense of Definition \ref{defn:Huang_2-1-1} and require that $\sE$ be Morse--Bott at $x_\infty$ in the sense of Definition \ref{defn:Morse-Bott_function}, so $\sU\cap\Crit\sE$ is a relatively open, smooth submanifold of $\sX$ and $K := \Ker\sE''(x_\infty) = T_{x_\infty}\Crit\sE$. Suppose that for each $x \in \sU$, the bounded linear operator
\[
\sM'(x): \sX \to \sY
\]
has an extension
\[
\sM_1(x): \sG \to \sH
\]
such that the following map is continuous,
\[
\sU \ni x \mapsto \sM_1(x) \in \sL(\sG, \sH).
\]
Assume that $K\subset\sX$ has a closed complement $\sX_0 \subset \sX$, that $\sK := \Ker\sM_1(x_\infty)\subset\sG$ has a closed complement $\sG_0 \subset \sG$ with $\sX_0 \subset \sG_0$, and that $\Ran\sM_1(x_\infty) \subset \sH$ is a closed subspace. Then there are constants $Z \in (0,\infty)$ and $\sigma \in (0,1]$ with the following significance. If $x \in \sU$ obeys
\begin{equation}
\label{eq:Lojasiewicz-Simon_gradient_inequality_neighborhood_Morse-Bott}
\|x-x_\infty\|_\sX < \sigma,
\end{equation}
then
\begin{equation}
\label{eq:Lojasiewicz-Simon_gradient_inequality_Morse-Bott}
\|\sM(x)\|_{\sH} \geq Z|\sE(x) - \sE(x_\infty)|^{1/2}.
\end{equation}
\end{mainthm}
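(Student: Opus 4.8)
The plan is to prove the gradient inequality \eqref{eq:Lojasiewicz-Simon_gradient_inequality_Morse-Bott} by a direct Taylor-expansion argument, in the spirit of \cite[Theorem 4]{Feehan_Maridakis_Lojasiewicz-Simon_harmonic_maps_v6}, that uses only $\sE \in C^2$ and so avoids any infinite-dimensional Morse--Bott normal form (such a normal form would demand stronger regularity than $C^2$). The underlying idea is that near the critical submanifold $\Crit\sE$ the function $\sE$ agrees, to leading order, with the \emph{nondegenerate} quadratic form $\tfrac12\sE''(x_\infty)(\xi,\xi)$ in the variable $\xi$ transverse to $\Crit\sE$, and for such a form the gradient inequality is elementary; the role of the two Banach scales $\sX\subset\sG$ and $\sY\subset\sH$ is precisely to allow the transverse variable to be controlled in the weaker norm $\|\cdot\|_\sG$ in which the Hessian is bounded and the gradient is coercive.

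\emph{Step 1: geometry near $\Crit\sE$.} First I would shrink $\sU$ so that $\sU\cap\Crit\sE$ is connected; since it is a smooth submanifold lying in $\Crit\sE$, the restriction $\sE|_{\sU\cap\Crit\sE}$ has vanishing differential, hence $\sE\equiv\sE(x_\infty)$ on $\sU\cap\Crit\sE$. Using the splitting $\sX=K\oplus\sX_0$ with $K=T_{x_\infty}\Crit\sE$ — here the Morse--Bott identity $K=\Ker\sE''(x_\infty)=T_{x_\infty}\Crit\sE$ is essential, so that $\sX_0$ is complementary to the \emph{actual} tangent space of $\Crit\sE$ — the Implicit Function Theorem produces, after passing to a slice neighborhood, a $C^1$ retraction $\Pi:\sU\to\sU\cap\Crit\sE$ of the form $\Pi(x)=x_\infty+P_K(x-x_\infty)+\varphi(P_K(x-x_\infty))$, where $P_K:\sX\to K$ is the projection along $\sX_0$ and $\varphi$ is $C^1$ into $\sX_0$ with $\varphi(0)=0$ and $\varphi'(0)=0$. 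Setting $\xi(x):=x-\Pi(x)$, one has $\xi(x)\in\sX_0$ for every $x\in\sU$, together with $\|\xi(x)\|_\sX+\|\Pi(x)-x_\infty\|_\sX\le C\|x-x_\infty\|_\sX$ on a small enough ball, and hence $\|\xi(x)\|_\sG\le C\|x-x_\infty\|_\sX$ since $\sX\subset\sG$. \emph{Step 2: linear coercivity.} Since $\sM$ is a gradient map, differentiating \eqref{eq:Differential_and_gradient_maps} gives $\sE''(y)(v,w)=\langle v,\sM_1(y)w\rangle_{\sG\times\sG^*}$ for all $v,w\in\sX$ (using $\sM'(y)=\sM_1(y)|_\sX$, compatibility of the dualities, and symmetry of $\sE''$), whence $K=\Ker\sE''(x_\infty)=\sX\cap\Ker\sM_1(x_\infty)=\sX\cap\sK$; this makes the hypotheses $\sX=K\oplus\sX_0$, $\sG=\sK\oplus\sG_0$, and $\sX_0\subset\sG_0$ mutually consistent. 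Because $\sG_0$ is closed in $\sG$ and $\Ran\sM_1(x_\infty)$ is closed in $\sH$, the map $\sM_1(x_\infty)|_{\sG_0}:\sG_0\to\Ran\sM_1(x_\infty)$ is a continuous bijection of Banach spaces, so by the Open Mapping Theorem its inverse is bounded; thus there is $c_0\in(0,\infty)$ with $\|\sM_1(x_\infty)\eta\|_\sH\ge c_0\|\eta\|_\sG$ for all $\eta\in\sG_0$, in particular for all $\eta\in\sX_0$.

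\emph{Step 3: the two estimates and conclusion.} For the gradient lower bound, using $\sM(\Pi(x))=0$ and the Fundamental Theorem of Calculus, $\sM(x)=\int_0^1\sM_1(\Pi(x)+t\xi(x))\xi(x)\,dt=\sM_1(x_\infty)\xi(x)+\int_0^1(\sM_1(\Pi(x)+t\xi(x))-\sM_1(x_\infty))\xi(x)\,dt$; by the continuity of $y\mapsto\sM_1(y)\in\sL(\sG,\sH)$ at $x_\infty$, the coercivity of Step 2, and the bounds of Step 1, there is $\sigma\in(0,1]$ such that $\|x-x_\infty\|_\sX<\sigma$ implies $\|\sM(x)\|_\sH\ge\tfrac{c_0}{2}\|\xi(x)\|_\sG$. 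For the energy upper bound, the identity $\sE''(y)(v,w)=\langle v,\sM_1(y)w\rangle_{\sG\times\sG^*}$ shows $\sE''(y)$ extends to a bounded symmetric bilinear form on $\sG\times\sG$ with norm $\le C\|\sM_1(y)\|_{\sL(\sG,\sH)}$, uniformly bounded for $y$ near $x_\infty$; combining this with $\sE'(\Pi(x))=0$, $\sE(\Pi(x))=\sE(x_\infty)$, and Taylor's theorem with integral remainder yields $|\sE(x)-\sE(x_\infty)|=|\sE(x)-\sE(\Pi(x))|\le C_2\|\xi(x)\|_\sG^2$ for $x$ near $x_\infty$. Shrinking $\sigma$ so that all of the above hold, we conclude $\|\sM(x)\|_\sH\ge\tfrac{c_0}{2}\|\xi(x)\|_\sG\ge\tfrac{c_0}{2\sqrt{C_2}}\,|\sE(x)-\sE(x_\infty)|^{1/2}$, which is \eqref{eq:Lojasiewicz-Simon_gradient_inequality_Morse-Bott} with $Z:=c_0/(2\sqrt{C_2})$.

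The main obstacle I expect is not any single estimate but the two-scale bookkeeping: the transverse variable $\xi(x)$ must be measured in the norm $\|\cdot\|_\sG$, not $\|\cdot\|_\sX$, so that the quadratic upper bound on $|\sE(x)-\sE(x_\infty)|$ and the coercive lower bound on $\|\sM(x)\|_\sH$ are expressed in the same norm — this is exactly why one hypothesizes the continuously-varying extension $\sM_1(x)\in\sL(\sG,\sH)$ of $\sM'(x)$, together with the closed-range and complemented-subspace conditions at $x_\infty$, and it is where the four-space structure does real work. A related point is checking that these hypotheses are internally consistent and compatible with the Morse--Bott identity $K=\Ker\sE''(x_\infty)=T_{x_\infty}\Crit\sE$ (without which $\sX_0$ need not be transverse to $\Crit\sE$ and the slice chart fails). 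By comparison, constructing $\Pi$ with the slice property $\xi(x)\in\sX_0$ and the quantitative bounds of Step 1, and carrying out the Fundamental Theorem of Calculus and Taylor estimates, are routine.
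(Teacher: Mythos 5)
Your proof is correct and proceeds by essentially the same argument as the paper's: split off the transverse component $\xi\in\sX_0$, establish coercivity $\|\sM_1(x_\infty)\eta\|_\sH\ge c_0\|\eta\|_\sG$ on $\sG_0$ via the Open Mapping Theorem, then use the Fundamental Theorem of Calculus for the lower bound on $\|\sM(x)\|_\sH$ and a Taylor-with-integral-remainder estimate for the upper bound on $|\sE(x)-\sE(x_\infty)|$, both measured in $\|\xi\|_\sG$. The one structural difference is that the paper first applies a $C^2$ diffeomorphism (justified since $\Crit\sE$ is a smooth submanifold tangent to $K$ at $x_\infty$) to reduce to the case $\sU\cap\Crit\sE=\sU\cap K$, and then writes $x=\xi+k$ with $k\in K$ and uses $\sM(k)=0$ directly; you instead keep $\Crit\sE$ curved and build a $C^1$ retraction $\Pi$ onto it, using $\sM(\Pi(x))=0$. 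Your variant has the modest advantage of avoiding any need to track how the gradient map and its $\sG\to\sH$ extension transform under the straightening diffeomorphism, which the paper leaves implicit; the paper's variant has slightly cleaner bookkeeping once the diffeomorphism is in place. Either way the heart of the proof — the two-scale estimates in $\|\cdot\|_\sG$ that you correctly identify as the crux — is identical.
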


We prove Theorem \ref{mainthm:Lojasiewicz-Simon_gradient_inequality_Morse-Bott} in Section \ref{sec:Lojasiewicz-Simon_gradient_inequality_abstract_functional_Morse-Bott}.

\begin{rmk}[Previous versions of the {\L}ojasiewicz--Simon gradient inequality for $C^2$ Morse--Bott functions on abstract Banach spaces]
\label{rmk:Previous_versions_Lojasiewicz-Simon_gradient_inequality_Morse-Bott_abstract_Banach_spaces}
Previous versions of Theorem \ref{mainthm:Lojasiewicz-Simon_gradient_inequality_Morse-Bott} were proved by Simon \cite[Lemma 3.13.1]{Simon_1996} (for a harmonic map energy function on a Banach space of $C^{2,\alpha}$ sections of a Riemannian vector bundle), Haraux and Jendoubi \cite[Theorem 2.1]{Haraux_Jendoubi_2007} (for functions on abstract Hilbert spaces) and in more generality by Chill in \cite[Corollary 3.12]{Chill_2003} (for functions on abstract Banach spaces); a more elementary version was proved by Huang as \cite[Proposition 2.7.1]{Huang_2006} (for functions on abstract Banach spaces). These authors do not use Morse--Bott terminology but their hypotheses imply this condition --- directly in the case of Haraux and Jendoubi and Chill and by a remark due to Simon in \cite[p. 80] {Simon_1996} that his integrability condition \cite[Equation (iii), p. 79]{Simon_1996} is equivalent to a restatement of the Morse--Bott condition. Their gradient inequalities are less general than our Theorem \ref{mainthm:Lojasiewicz-Simon_gradient_inequality_Morse-Bott}. See Feehan \cite[Remark 1.16 and Appendix C]{Feehan_lojasiewicz_inequality_all_dimensions_morse-bott} for further discussion of the relationship between definitions of integrability, such as those described by Adams and Simon \cite{Adams_Simon_1988}, and the Morse--Bott condition.
\end{rmk}

\begin{rmk}[On the proof of Theorem \ref{mainthm:Lojasiewicz-Simon_gradient_inequality_Morse-Bott}]
\label{rmk:Lojasiewicz-Simon_gradient_inequality_Morse-Bott_on_proof}
Special cases of Theorem \ref{mainthm:Lojasiewicz-Simon_gradient_inequality_Morse-Bott} can be obtained as consequences of suitable Morse--Bott lemmas (see Feehan \cite{Feehan_lojasiewicz_inequality_all_dimensions_morse-bott} for a discussion and references). However, proofs of Morse--Bott lemmas require care and it is unclear whether one would hold in the generality provided by Theorem \ref{mainthm:Lojasiewicz-Simon_gradient_inequality_Morse-Bott}. On the other hand, the proof of Theorem \ref{mainthm:Lojasiewicz-Simon_gradient_inequality_Morse-Bott} provided in Section \ref{sec:Lojasiewicz-Simon_gradient_inequality_abstract_functional_Morse-Bott} is quite direct.
\end{rmk}

\begin{rmk}[Comparison between Inequality \eqref{eq:Lojasiewicz-Simon_gradient_inequality_Morse-Bott} and other {\L}ojasiewicz-Simon gradient inequalities]
In \cite[Theorems 1, 2, and 3]{Feehan_Maridakis_Lojasiewicz-Simon_Banach}, Maridakis and the author establish versions of Theorem \ref{mainthm:Lojasiewicz-Simon_gradient_inequality_Morse-Bott} where the inequality \eqref{eq:Lojasiewicz-Simon_gradient_inequality_Morse-Bott} is replaced by
\begin{equation}
\label{eq:Lojasiewicz-Simon_gradient_inequality}
\|\sM(x)\|_{\sH} \geq Z|\sE(x) - \sE(x_\infty)|^\theta,
\end{equation}
for some $\theta \in [1/2,1)$, the operators $\sM'(x_\infty)$ and $\sM_1(x_\infty)$ are Fredholm with index zero, and $\sM:\sU\to\sY$ is real analytic. Those results are proved with the aid of a \emph{Lyapunov-Schmidt reduction} of $\sE$ (for example, \cite[Proposition 5.1]{Huang_2006}) to a real analytic function on an open neighborhood of the origin in Euclidean space and appealing to {\L}ojasiewicz's gradient inequality \cite{Lojasiewicz_1959, Lojasiewicz_1961, Lojasiewicz_1965}, with a simplified proof provided by Bierstone and Milman \cite[Theorem 6.4 and Remark 6.5]{BierstoneMilman}. However, the requirement that the operators $\sM'(x_\infty)$ and $\sM_1(x_\infty)$ be Fredholm can be restrictive. For example, in the context of Yang--Mills or coupled Yang--Mills energy functions, one must take a quotient of the affine space of all $W^{1,q}$ connections or pairs by the Banach Lie group $\Aut(P)$ of $W^{2,q}$ gauge transformations and that action can introduce singularities in the quotient space as we recall in Section \ref{subsec:Optimal_Lojasiewicz-Simon_inequalities_self-dual_Yang-Mills_energy_Morse-Bott}.
\end{rmk}

\begin{rmk}[Optimal {\L}ojasiewicz-Simon gradient inequalities and exponential convergence of gradient flow]
It is of considerable interest to know when the optimal exponent $\theta = 1/2$ is achieved, since in that case one can prove (for example, \cite[Theorem 24.21]{Feehan_yang_mills_gradient_flow_v4}) that a global solution $u:[0,\infty)\to\sX$ to a gradient system governed by the {\L}ojasiewicz--Simon gradient inequality,
\[
\frac{du}{dt} = -\sE'(u(t)), \quad u(0) = x_0,
\]
has \emph{exponential} rather than mere power-law rate of convergence
to the critical point $x_\infty$. See \cite[Section 2.1]{Feehan_yang_mills_gradient_flow_v4} for a detailed summary of results of this kind.
\end{rmk}

\begin{rmk}[Comparison between Theorem \ref{mainthm:Lojasiewicz-Simon_gradient_inequality_Morse-Bott} and a previous result due to the author and Maridakis]
Theorem \ref{mainthm:Lojasiewicz-Simon_gradient_inequality_Morse-Bott} is a generalization of our previous \cite[Theorems 3 and 4]{Feehan_Maridakis_Lojasiewicz-Simon_Banach}, but the advantage of Theorem \ref{mainthm:Lojasiewicz-Simon_gradient_inequality_Morse-Bott} here is that the operators $\sM'(x_\infty)$ and $\sM_1(x_\infty)$ are not required to be Fredholm with index zero. While that generalization can be established by modifying the proofs of \cite[Theorems 3 and 4]{Feehan_Maridakis_Lojasiewicz-Simon_Banach}, we instead give a more direct and much simpler proof in Section \ref{subsec:Lojasiewicz-Simon_gradient_inequality_Morse-Bott}. The latter proof also allows us to slightly relax other hypotheses on the Banach spaces and their embeddings. Of course, when $\sM'(x_\infty)$ or $\sM_1(x_\infty)$ are Fredholm operators, then their kernels are finite-dimensional and thus have closed complements by \cite[Lemma 4.21 (a)]{Rudin}, and their ranges are closed.
\end{rmk}

\begin{rmk}[Choices of the Banach spaces $\sG$ and $\sH$]
In typical applications of Theorem \ref{mainthm:Lojasiewicz-Simon_gradient_inequality_Morse-Bott} one chooses $\sG$ and $\sH$ to be Hilbert spaces and that simplifies the statement of the theorem since a closed subspace of a Hilbert space necessarily has a closed (orthogonal) complement \cite[Theorem 12.4]{Rudin}. However, the greater generality allows us to quickly infer several corollaries (see the forthcoming Corollaries \ref{maincor:Lojasiewicz-Simon_gradient_inequality_Morse-Bott_GisX} and \ref{maincor:Lojasiewicz-Simon_gradient_inequality_Morse-Bott_YisXdual}) analogous to \cite[Theorems 1, 2, and 4]{Feehan_Maridakis_Lojasiewicz-Simon_Banach} and whose statements are shorter and thus more easily understood, but Theorem \ref{mainthm:Lojasiewicz-Simon_gradient_inequality_Morse-Bott} is the most useful version in applications to proofs of global existence and convergence of gradient flows. For example, Theorem \ref{mainthm:Lojasiewicz-Simon_gradient_inequality_Morse-Bott} is the only version that yields Simon's \cite[Theorem 3]{Simon_1983} for all dimensions of the base manifold, $X$, with $\sX = C^{2,\alpha}(X;V)$ and $\sH = L^2(X;V)$ (where $V$ is a Riemannian vector bundle over $X$), and, moreover, for a wide variety of alternative choices of H{\"o}lder or Sobolev spaces for $\sX$; see \cite[Remark 1.14]{Feehan_Maridakis_Lojasiewicz-Simon_Banach}.
\end{rmk}

\begin{rmk}[Harmonic map energy function for maps from a Riemann surface into a closed Riemannian manifold]
For the harmonic map energy function, an optimal {\L}ojasiewicz--Simon gradient inequality,
\[
\|\sE'(f)\|_{L^p(S^2)} \geq Z|\sE(f) - \sE(f_\infty)|^{1/2},
\]
has been obtained by Kwon \cite[Theorem 4.2]{KwonThesis} for maps $f:S^2\to N$, where $N$ is a closed Riemannian manifold and $f$ is close to a harmonic map $f_\infty$ in the sense that
\[
\|f - f_\infty\|_{W^{2,p}(S^2)} < \sigma,
\]
where $p$ is restricted to the range $1 < p \leq 2$, and $f_\infty$ is assumed to be \emph{integrable} in the sense of \cite[Definitions 4.3 or 4.4 and Proposition 4.1]{KwonThesis}. Her \cite[Proposition 4.1]{KwonThesis} quotes results of Simon \cite[pp. 270--272]{Simon_1985} and Adams and Simon \cite{Adams_Simon_1988}. The \cite[Lemma 3.3]{Liu_Yang_2010} due to Liu and Yang is another example of an optimal {\L}ojasiewicz--Simon gradient inequality for the harmonic map energy function, but restricted to the setting of maps $f:S^2\to N$, where $N$ is a K{\"a}hler manifold of complex dimension $n \geq 1$ and nonnegative bisectional curvature, and the energy $\sE(f)$ is sufficiently small. The result of Liu and Yang generalizes that of Topping \cite[Lemma 1]{Topping_1997}, who assumes that $N = S^2$.
\end{rmk}

\begin{rmk}[Yamabe function for Riemannian metrics on a closed manifold]
For the Yamabe function, an optimal {\L}ojasiewicz--Simon gradient inequality, has been obtained by Carlotto, Chodosh, and Rubinstein \cite{Carlotto_Chodosh_Rubinstein_2015} under the hypothesis that the critical point is \emph{integrable} in the sense of their
\cite[Definition 8]{Carlotto_Chodosh_Rubinstein_2015}, a condition that they observe in \cite[Lemma 9]{Carlotto_Chodosh_Rubinstein_2015} (quoting \cite[Lemma 1]{Adams_Simon_1988} due to Adams and Simon) is equivalent to a function on Euclidean space given by the \emph{Lyapunov-Schmidt reduction} of $\sE$ being constant on an open neighborhood of the critical point.
\end{rmk}

\begin{rmk}[Yang--Mills energy function over a Riemann surface]
For the Yang--Mills energy function for connections on a principal $\U(n)$-bundle over a closed Riemann surface, an optimal {\L}ojasiewicz--Simon gradient inequality, has been obtained by R\r{a}de \cite[Proposition 7.2]{Rade_1992} when the Yang--Mills connection is \emph{irreducible}.
\end{rmk}

\begin{rmk}[$F$-function on the space of hypersurfaces in Euclidean space]
Colding and Minicozzi \cite{Colding_Minicozzi_2014sdg, Colding_Minicozzi_2015} have directly proved {\L}ojasiewicz--Simon gradient and distance inequalities
\cite[Equations (5.9) and (5.10)]{Colding_Minicozzi_Pedersen_2015bams} that do not involve Lyapunov-Schmidt reduction to a finite-dimensional gradient inequality. Their gradient inequality applies to the $F$ function \cite[Section 2.4]{Colding_Minicozzi_Pedersen_2015bams} on the space of hypersurfaces $\Sigma \subset \RR^{d+1}$ and is analogous to \eqref{eq:Lojasiewicz-Simon_gradient_inequality} with $\theta=2/3$. Their cited articles contain detailed technical statements of their inequalities while their article with Pedersen \cite{Colding_Minicozzi_Pedersen_2015bams} contains a less technical summary of some of their main results.
\end{rmk}

If $\sG = \sX$ and $\sH = \sY$, then the statement of Theorem \ref{mainthm:Lojasiewicz-Simon_gradient_inequality_Morse-Bott} simplifies to give the following generalization of \cite[Theorems 2 and 4]{Feehan_Maridakis_Lojasiewicz-Simon_Banach}.

\begin{maincor}[{\L}ojasiewicz--Simon gradient inequality for $C^2$ Morse--Bott functions on Banach spaces]
\label{maincor:Lojasiewicz-Simon_gradient_inequality_Morse-Bott_GisX}
(Compare Feehan and Maridakis \cite[Theorems 2 and 4]{Feehan_Maridakis_Lojasiewicz-Simon_Banach}.)
Let $\sX$ and $\sY$ be Banach spaces with a continuous embedding $\sY \subset \sX^*$. Let $\sU \subset \sX$ be an open subset, $\sE:\sU\to\RR$ be a $C^2$ function, and $x_\infty\in\sU$ be a critical point of $\sE$, so $\sE'(x_\infty) = 0$. Let $\sM:\sU\to\sY$ be a $C^1$ gradient map for $\sE$ in the sense of Definition \ref{defn:Huang_2-1-1} and require that $\sE$ be Morse--Bott at $x_\infty$ in the sense of Definition \ref{defn:Morse-Bott_function}, so $\sU\cap\Crit\sE$ is a relatively open, smooth submanifold of $\sX$ and $K := \Ker\sE''(x_\infty) = T_{x_\infty}\Crit\sE$. Assume that $K\subset\sX$ has a closed complement $\sX_0 \subset \sX$, and that $\Ran\sM'(x_\infty) \subset \sY$ is a closed subspace. Then there are constants $Z \in (0,\infty)$ and $\sigma \in (0,1]$ with the following significance. If $x \in \sU$ obeys
\begin{equation}
\label{eq:Lojasiewicz-Simon_gradient_inequality_neighborhood_Morse-Bott_GisX}
\|x-x_\infty\|_\sX < \sigma,
\end{equation}
then
\begin{equation}
\label{eq:Lojasiewicz-Simon_gradient_inequality_Morse-Bott_GisX}
\|\sM(x)\|_{\sY} \geq Z|\sE(x) - \sE(x_\infty)|^{1/2}.
\end{equation}
\end{maincor}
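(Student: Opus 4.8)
The plan is to obtain Corollary~\ref{maincor:Lojasiewicz-Simon_gradient_inequality_Morse-Bott_GisX} as a direct specialization of Theorem~\ref{mainthm:Lojasiewicz-Simon_gradient_inequality_Morse-Bott}, taking the auxiliary Banach spaces to be $\sG := \sX$ and $\sH := \sY$. With these choices the chain of continuous embeddings $\sX \subset \sG$ and $\sY \subset \sH \subset \sG^* \subset \sX^*$ demanded by the theorem collapses: three of the four inclusions become the identity map, and the remaining one, $\sH = \sY \subset \sX^* = \sG^*$, is precisely the hypothesis $\sY \subset \sX^*$ of the corollary. The open set $\sU$, the $C^2$ potential $\sE$, the critical point $x_\infty$, the $C^1$ gradient map $\sM \colon \sU \to \sY$, the Morse--Bott hypothesis at $x_\infty$, and the existence of a closed complement $\sX_0 \subset \sX$ of $K = \Ker \sE''(x_\infty) = T_{x_\infty}\Crit\sE$ are all carried over verbatim.

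Next I would verify the two derivative hypotheses of Theorem~\ref{mainthm:Lojasiewicz-Simon_gradient_inequality_Morse-Bott}. Since $\sG = \sX$ and $\sH = \sY$, the bounded operator $\sM'(x) \colon \sX \to \sY$ already maps $\sG \to \sH$, so one simply sets $\sM_1(x) := \sM'(x)$; this is trivially an extension, and continuity of the map $\sU \ni x \mapsto \sM_1(x) \in \sL(\sG,\sH) = \sL(\sX,\sY)$ is exactly the assumption that $\sM$ is of class $C^1$. The only point requiring a short argument is the identification $\sK := \Ker \sM_1(x_\infty) = K$: differentiating the defining relation $\sE'(x)v = \langle v, \sM(x)\rangle_{\sX\times\sX^*}$ at $x_\infty$ shows that $\sE''(x_\infty)w$ and $\sM'(x_\infty)w$ coincide as elements of $\sX^*$ for each $w \in \sX$, so, since the embedding $\sY \subset \sX^*$ is injective, $\sM'(x_\infty)w = 0$ in $\sY$ precisely when $\sE''(x_\infty)w = 0$ in $\sX^*$; hence $\sK = \Ker\sM'(x_\infty) = \Ker\sE''(x_\infty) = K$.

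With this identification in hand I would take $\sG_0 := \sX_0$ as the closed complement of $\sK$ in $\sG = \sX$, so that the condition $\sX_0 \subset \sG_0$ holds trivially, and note that $\Ran \sM_1(x_\infty) = \Ran \sM'(x_\infty) \subset \sY = \sH$ is closed by hypothesis of the corollary. All hypotheses of Theorem~\ref{mainthm:Lojasiewicz-Simon_gradient_inequality_Morse-Bott} are then in force, and its conclusion~\eqref{eq:Lojasiewicz-Simon_gradient_inequality_Morse-Bott}, read with $\sH = \sY$, is exactly~\eqref{eq:Lojasiewicz-Simon_gradient_inequality_Morse-Bott_GisX}, with the neighborhood radius $\sigma$ and constant $Z$ inherited directly from the theorem. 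I do not expect any genuine analytic obstacle here: the entire content lies in Theorem~\ref{mainthm:Lojasiewicz-Simon_gradient_inequality_Morse-Bott}, and the only point of care is that the replacement of the Fredholm hypotheses of \cite[Theorems~2 and~4]{Feehan_Maridakis_Lojasiewicz-Simon_harmonic_maps_v6} by the weaker ``closed range together with closed complements'' hypotheses is precisely what makes the specialization $\sG = \sX$, $\sH = \sY$ admissible while keeping the statement nontrivial.
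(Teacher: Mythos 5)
Your proof is correct and follows exactly the route the paper intends: specialize Theorem \ref{mainthm:Lojasiewicz-Simon_gradient_inequality_Morse-Bott} by setting $\sG := \sX$, $\sH := \sY$, $\sM_1 := \sM'$, and $\sG_0 := \sX_0$, after noting that injectivity of the embedding $\sY \subset \sX^*$ gives $\Ker\sM'(x_\infty) = \Ker\sE''(x_\infty) = K$. The paper treats this specialization as immediate and does not spell out the details you checked, but there is no discrepancy in substance.
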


For example, Corollary \ref{maincor:Lojasiewicz-Simon_gradient_inequality_Morse-Bott_GisX} yields a version of Simon's \cite[Theorem 3]{Simon_1983} when $X$ has dimension $d=2$ or $3$ and choose $\sX = W^{1,p}(X;V)$ and $\sY = W^{-1,p}(X;V)$, where $p>d$ is small enough that $L^2(X;V) \subset W^{-1,p}(X;V)$; see \cite[Remark 1.15]{Feehan_Maridakis_Lojasiewicz-Simon_Banach}.

If in addition $\sY = \sX^*$, then the statement of Theorem \ref{mainthm:Lojasiewicz-Simon_gradient_inequality_Morse-Bott} simplifies further to give the following generalization of \cite[Theorems 1 and 4]{Feehan_Maridakis_Lojasiewicz-Simon_Banach}.

\begin{maincor}[{\L}ojasiewicz--Simon gradient inequality for $C^2$ Morse--Bott functions on Banach spaces]
\label{maincor:Lojasiewicz-Simon_gradient_inequality_Morse-Bott_YisXdual}
(Compare Feehan and Maridakis \cite[Theorems 1 and 4]{Feehan_Maridakis_Lojasiewicz-Simon_Banach}.)
Let $\sX$ be a Banach space, $\sU \subset \sX$ be an open subset, $\sE:\sU\to\RR$ be a $C^2$ function, and $x_\infty\in\sU$ be a critical point of $\sE$, so $\sE'(x_\infty) = 0$. Require that $\sE$ be Morse--Bott at $x_\infty$ in the sense of Definition \ref{defn:Morse-Bott_function}, so $\sU\cap\Crit\sE$ is a relatively open, smooth submanifold of $\sX$ and $K := \Ker\sE''(x_\infty) = T_{x_\infty}\Crit\sE$. Assume that $K\subset\sX$ has a closed complement, $\sX_0 \subset \sX$, and that $\Ran\sE''(x_\infty) \subset \sX^*$ is a closed subspace. Then there are constants, $Z \in (0,\infty)$ and $\sigma \in (0,1]$, with the following significance. If $x \in \sU$ obeys
\begin{equation}
\label{eq:Lojasiewicz-Simon_gradient_inequality_neighborhood_Morse-Bott_YisXdual}
\|x-x_\infty\|_\sX < \sigma,
\end{equation}
then
\begin{equation}
\label{eq:Lojasiewicz-Simon_gradient_inequality_Morse-Bott_YisXdual}
\|\sE'(x)\|_{\sX^*} \geq Z|\sE(x) - \sE(x_\infty)|^{1/2}.
\end{equation}
\end{maincor}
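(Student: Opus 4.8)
The plan is to obtain Corollary \ref{maincor:Lojasiewicz-Simon_gradient_inequality_Morse-Bott_YisXdual} as the special case of Theorem \ref{mainthm:Lojasiewicz-Simon_gradient_inequality_Morse-Bott} in which all of the auxiliary Banach spaces collapse onto $\sX$ and $\sX^*$. Concretely, I would set $\sG := \sX$ and $\sH := \sY := \sX^*$, so that the chain of continuous embeddings $\sX \subset \sG$ and $\sY \subset \sH \subset \sG^* \subset \sX^*$ demanded by Theorem \ref{mainthm:Lojasiewicz-Simon_gradient_inequality_Morse-Bott} degenerates to the trivial chain $\sX = \sG$ and $\sX^* = \sY = \sH = \sG^*$, each ``embedding'' being the identity. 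As noted immediately after Definition \ref{defn:Huang_2-1-1}, when $\sY = \sX^*$ the gradient map and the differential map coincide, so the required $C^1$ gradient map is simply $\sM := \sE' : \sU \to \sX^*$, which is $C^1$ precisely because $\sE$ is assumed to be $C^2$.

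Next I would check the remaining hypotheses of Theorem \ref{mainthm:Lojasiewicz-Simon_gradient_inequality_Morse-Bott} in this setting. For each $x \in \sU$ the operator $\sM'(x) = \sE''(x) : \sX \to \sX^*$ already maps $\sG \to \sH$ since $\sG = \sX$ and $\sH = \sX^*$, so one takes $\sM_1(x) := \sM'(x)$; the continuity of $\sU \ni x \mapsto \sM_1(x) = \sE''(x) \in \sL(\sX,\sX^*)$ is again immediate from $\sE \in C^2(\sU;\RR)$. The space $\sK := \Ker \sM_1(x_\infty) = \Ker \sE''(x_\infty) = K$ coincides with the kernel appearing in the Morse--Bott hypothesis, so the closed complement $\sX_0 \subset \sX$ of $K$ supplied in the statement of the corollary simultaneously serves as the closed complement $\sG_0 := \sX_0$ of $\sK$ in $\sG = \sX$, and the containment $\sX_0 \subset \sG_0$ holds trivially with equality. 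Finally, the hypothesis that $\Ran \sE''(x_\infty) \subset \sX^*$ is a closed subspace is exactly the hypothesis that $\Ran \sM_1(x_\infty) \subset \sH$ is closed.

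With all hypotheses of Theorem \ref{mainthm:Lojasiewicz-Simon_gradient_inequality_Morse-Bott} verified, that theorem produces constants $Z \in (0,\infty)$ and $\sigma \in (0,1]$ such that $\|x - x_\infty\|_\sX < \sigma$ implies $\|\sM(x)\|_\sH \geq Z|\sE(x) - \sE(x_\infty)|^{1/2}$; substituting $\sM = \sE'$ and $\sH = \sX^*$ gives exactly \eqref{eq:Lojasiewicz-Simon_gradient_inequality_Morse-Bott_YisXdual}. There is no real obstacle here: all the analytic content lives in Theorem \ref{mainthm:Lojasiewicz-Simon_gradient_inequality_Morse-Bott}, and the only points needing a moment's attention are the bookkeeping identifications $\sM = \sE'$, $\sM_1 = \sE''$, and $\sK = K$, together with the observation that a single closed complement $\sX_0$ discharges both complementation hypotheses once $\sG = \sX$. (Equivalently, one may first pass to Corollary \ref{maincor:Lojasiewicz-Simon_gradient_inequality_Morse-Bott_GisX} by taking $\sG = \sX$ and $\sH = \sY$, and then set $\sY = \sX^*$; the two routes are the same.)
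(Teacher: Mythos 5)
Your proof is correct and takes exactly the paper's route: the paper also obtains Corollary \ref{maincor:Lojasiewicz-Simon_gradient_inequality_Morse-Bott_YisXdual} by specializing Theorem \ref{mainthm:Lojasiewicz-Simon_gradient_inequality_Morse-Bott} with $\sG = \sX$ and $\sH = \sY = \sX^*$, so that $\sM = \sE'$, $\sM_1 = \sE''$, $\sK = K$, and a single closed complement $\sX_0$ serves for both complementation hypotheses. Your verification of the hypotheses is careful and complete.
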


For example, Corollary \ref{maincor:Lojasiewicz-Simon_gradient_inequality_Morse-Bott_GisX} yields R\r{a}de's {\L}ojasiewicz-Simon gradient inequality for the Yang--Mills energy function when the base manifold has dimension $d=2$ or $3$ and our version of the same inequality \cite[Theorem 23.17]{Feehan_yang_mills_gradient_flow_v4} when $d=4$, for $\sX = W^{1,2}(X;\ad P)$, but not $d\geq 5$, nor does it yield any version of Simon's \cite[Theorem 3]{Simon_1983}.

\subsection{Optimal {\L}ojasiewicz--Simon inequalities and Morse--Bott properties for the self-dual Yang--Mills energy function near anti-self-dual connections}
\label{subsec:Optimal_Lojasiewicz-Simon_inequalities_self-dual_Yang-Mills_energy_Morse-Bott}
We define the \emph{Yang--Mills-energy function} by \cite[p. 548]{Atiyah_Bott_1983}
\begin{equation}
\label{eq:Yang-Mills_energy_function}
\YM(A)  := \frac{1}{2}\int_X |F_A|^2\,d\vol_g,
\end{equation}
where $A$ is a $W^{1,q}$ connection on $P$ and curvature \cite[Equation (2.1.13)]{DK},
\[
F_A = d_A\circ d_A \in L^2(X; \wedge^2(T^*X)\otimes\ad P),
\]
where $q \geq \max\{2, 4d/(d+4)\}$. Writing $A = A_1+a$, for any $C^\infty$ connection $A_1$ on $P$, we have \cite[Equation (2.1.14)]{DK}
\begin{equation}
\label{eq:Curvature_FA_as FA1+a}
F_A = F_{A_1} + d_{A_1}a + a\wedge a.
\end{equation}
The constraint $q\geq 2$ ensures that $d_{A_1}a \in L^2(X; \wedge^2(T^*X)\otimes\ad P)$ and the constraint $q \geq 4d/(d+4)$ is equivalent to $q^* := dq/(d-q) \geq 4$ and thus $W^{1,q}(X;\RR) \subset L^4(X;\RR)$ when $q<d$ by \cite[Theorem 4.12, Part I (C)]{AdamsFournier}. Hence, $a \in L^4(X; T^*X\otimes\ad P)$ and $a\wedge a \in L^2(X; \wedge^2(T^*X)\otimes\ad P)$, which gives $F_A \in L^2(X; \wedge^2(T^*X)\otimes\ad P)$, as desired. Note that $d/2 \geq 4d/(d+4) \iff d \geq 4$ and $4d/(d+4) < 2$ only when $d=2,3$.

In order to ensure that the energy $\YM(A)$ in \eqref{eq:Yang-Mills_energy_function} is well-defined for a $W^{1,q}$ connection $A$ and that the action of gauge transformations on $P$ is also well-defined, we shall assume for consistency and simplicity throughout this article that $q \in [2,\infty)$ and obeys $q>d/2$ in this context, even though that condition may be stronger than necessary in some instances.

In writing \eqref{eq:Curvature_FA_as FA1+a}, we are slightly abusing notation since in the setting of \cite[Section 2.1]{DK}, for example, a representation, $\rho:G \hookrightarrow \End_\CC(\CC^n)$, is assumed and $a\wedge a$ denotes a combination of wedge product of one-forms $a \in \Omega^1(X;\End_\CC(E))$ and multiplication in $\End_\CC(E)$, where $E$ is the complex vector bundle, $P\times_\rho \CC^n$. Since we view $a \in \Omega^1(X;\ad P)$ and $F_A \in \Omega^2(X;\ad P)$ (as in \cite{AHS}) rather than $F_A \in \Omega^2(X;\End_\CC(E))$ (as in \cite{DK}), we should more precisely write (see the parenthetical remark just below \cite[Equation (2.1.14)]{DK} or \cite[Lemma 4.5]{AHS})
\begin{equation}
\label{eq:Curvature_FA_as FA1+a_bracket}
F_A = F_{A_1} + d_{A_1}a + \frac{1}{2}[a,a],
\end{equation}
where $[a,a](\eta,\zeta) := [a(\eta),a(\zeta)]$ for vector fields $\eta, \zeta \in C^\infty(TX)$ and $[\,,\,]$ denotes the Lie bracket on the Lie algebra $\fg$ of $G$. Compare \cite[Theorem II.5.2]{Kobayashi_Nomizu_v1} or \cite[p. 430]{AHS}. On the other hand, for $a,b \in \Omega^1(X;\ad P)$, the exterior covariant derivative $d_Ab$ is expressed in terms of $d_{A_1}b$ when $A=A_1+a$ by (see \cite[Sections 2.1.1 and 2.1.2]{DK} or \cite[Equations (3.3) and (4.1)]{AHS})
\begin{equation}
\label{eq:Exterior_covariant_derivative_dA_as dA1+a_bracket}
d_Ab = d_{A_1+a}b = d_{A_1}b + [a,b] = d_{A_1}b + 2 a\wedge b.
\end{equation}
Normally, these factors of $\frac{1}{2}$ or $2$ are immaterial and in such cases we abuse notation and omit them.

For $q \in [2,\infty)$ obeying $q>d/2$, let $\sB(P) := \sA(P)/\Aut(P)$ denote the quotient of the affine space $\sA(P)$ of $W^{1,q}$ connections on $P$, modulo the action of the group $\Aut(P)$ of $W^{2,q}$ automorphisms (or gauge transformations) of the principal $G$-bundle, $P$. We refer the reader to Donaldson and Kronheimer \cite[Section 4.2]{DK} or Freed and Uhlenbeck \cite[Chapter 3]{FU} for constructions of a smooth Banach manifold structure on the quotient, $\sB^*(P) := \sA^*(P)/\Aut(P)$, where $\sA^*(P)\subset \sA(P)$ is by definition the open subset consisting of $W^{1,q}$ connections on $P$ whose isotropy group is minimal, namely the center of $G$ \cite[p. 132]{DK}. Let
\begin{equation}
\label{eq:Moduli_space_flat_connections}
M_0(P) := \{A \in \sA(P): F_A = 0\}/\Aut(P)
\end{equation}
denote the moduli space of $W^{1,q}$ \emph{flat connections} on $P$. We write
\begin{equation}
\label{eq:W12_distance_to_moduli_space_flat_connections}
\dist_{W^{1,2}(X)} \left([A], M_0(P)\right)
:=
\inf_{\begin{subarray}{c} u \in \Aut(P), \\ [\Gamma] \in M_0(P) \end{subarray}}
\|u(A) - \Gamma\|_{W_\Gamma^{1,2}(X)}.
\end{equation}
Recall that the Yang--Mills energy function, $\YM:\sA(P)\to\RR$, in \eqref{eq:Yang-Mills_energy_function} has differential map, $\YM':\sA(P)\to T^*\sA(P)$, given by
\begin{equation}
\label{eq:Gradient_Yang-Mills_energy_function}
\YM'(A)(a) = (F_A,d_Aa)_{L^2(X)}= (d_A^*F_A,a)_{L^2(X)},
\end{equation}
for all $A \in \sA(P)$ and $a \in T_A\sA(P) = W^{1,q}(X;T^*X\otimes\ad P)$, where $T_A^*\sA(P) \cong W^{-1,q'}(X;T^*X\otimes\ad P)$ and $q' \in (1,2]$ is the dual H\"older exponent defined by $1/q+1/q'=1$.

We temporarily now restrict our attention to the case of $X$ of dimension $d=4$. For a $C^\infty$ connection, $A$, on $P$ we recall the splitting \cite[Equation (2.1.25)]{DK},
\begin{equation}
\label{eq:Donaldson_Kronheimer_2-1-25}
F_A = F_A^+ + F_A^- \in \Omega^2(X;\ad P) = \Omega^+(X;\ad P) \oplus \Omega^-(X;\ad P),
\end{equation}
corresponding to the splitting, $\wedge^2(T^*X) = \wedge^+(T^*X)\oplus\wedge^-(T^*X)$, into positive and negative eigenspaces, $\wedge^\pm(T^*X)$, of the Hodge star operator $*$ on $\wedge^2(T^*X)$, defined by the metric $g$, so $\Omega^\pm(X; \ad P) = C^\infty(X; \wedge^\pm(T^*X)\otimes\ad P)$ and \cite[Equation (1.3)]{TauSelfDual}
\begin{equation}
\label{eq:Taubes_1982_1-3}
F_A^\pm = \frac{1}{2}(1 \pm *)F_A \in \Omega^\pm(X; \ad P).
\end{equation}
Rather than consider the full Yang--Mills energy function \eqref{eq:Yang-Mills_energy_function}, for which it appears difficult to show has the Morse--Bott property at critical points that are not flat connections, we shall consider the \emph{self-dual Yang--Mills energy function}, $\YM_+:\sA(P) \to \RR$, on the affine space of $W^{1,q}$ connections $A$ on $P$ (with $q\geq 2$),
\begin{equation}
\label{eq:Self-dual_Yang-Mills_energy_function}
\YM_+(A)  := \frac{1}{2}\int_X |F_A^+|^2\,d\vol_g.
\end{equation}
Our definition \eqref{eq:Self-dual_Yang-Mills_energy_function} is partly motivated by the fact that when, for example, $G=\SU(n)$ and the second Chern number of $P$ is non-negative, $c_2(P)[X] \geq 0$, the energy function $\YM:\sA(P) \to \RR$, achieves its absolute minimum value at a connection $A$ if and only if $A$ is \emph{anti-self-dual}, so $F_A^+ = 0$, and $\YM(A)=16\pi^2c_2(P)[X]$, a constant that depends only on the topology of the principal $G$-bundle $P$; see \cite[Equation (2.1.33)]{DK} for $G=\SU(n)$ and \cite[Section 10]{Feehan_yang_mills_gradient_flow_v4} for more general formulae for the energies of anti-self-dual connections in the case of compact Lie groups. Our definition \eqref{eq:Self-dual_Yang-Mills_energy_function} of $\YM_+$ effectively subtracts this topological constant from $\YM$ in \eqref{eq:Yang-Mills_energy_function}.

Proceeding as in the case of the full Yang--Mills energy function, we see that $\YM_+:\sA(P) \to \RR$ has differential map, $\YM_+':\sA(P) \to T^*\sA(P)$, given by
\begin{equation}
\label{eq:Gradient_self-dual_Yang-Mills_energy_function}
\YM_+'(A)(a) = (F_A^+,d_A^+a)_{L^2(X)}= (d_A^{+,*}F_A^+,a)_{L^2(X)},
\end{equation}
for all $a \in T_A\sA(P) = W^{1,q}(X;T^*X\otimes\ad P)$.

We denote the finite-dimensional subvariety of gauge-equivalence classes of solutions to the \emph{anti-self-dual} equation with respect to $g$ by
\begin{equation}
\label{eq:Moduli_space_anti-self-dual_connections}
M_+(P,g) := \left\{[A] \in \sB(P): F_A^+ = 0 \quad\text{a.e. on } X\right\}.
\end{equation}
As usual \cite[Section 2.3.1]{DK}, one denotes $d_A^+ = \frac{1}{2}(1+*)d_A:\Omega^1(X; \ad P)\to \Omega^+(X; \ad P)$ and $H_A^2 = \Coker d_A^+$ \cite[Equation (4.2.27)]{DK}. We recall from \cite[Section 4.2.5]{DK} that if $H_A^2 = 0$ then
\[
\widetilde M_+(P,g) := \{B \in \sA(P): F_B^+ = 0 \quad\text{a.e. on } X\}
\]
is a smooth manifold near $A$ and
\[
M_+^*(P,g) := M_+(P,g)\cap \sB^*(P)
\]
is a smooth manifold near $[A]$. The Generic Metrics Theorem \cite[Corollary 4.3.18]{DK} due to Freed and Uhlenbeck implies that $H_A^2 = 0$ for all $[A] \in M_+^*(P,g)$ if $G=\SU(2)$ or $\SO(3)$ and $g$ is suitably generic.

If $F_A^+=0$, then $\YM_+'(A) \equiv 0$ by \eqref{eq:Gradient_self-dual_Yang-Mills_energy_function} and $A$ is a critical point of $\YM_+:\sA(P) \to \RR$, so that
\[
\widetilde M_+(P,g) \subset \widetilde{\Crit}\YM_+ \cap \sA(P),
\]
where $\widetilde{\Crit}\YM_+$ denotes the critical set of $\YM_+:\sA(P) \to \RR$. Conversely, if $A \in \widetilde{\Crit}\YM_+$ and $\Coker d_A^+ = 0$ then \eqref{eq:Gradient_self-dual_Yang-Mills_energy_function} implies that $F_A^+=0$ and $A \in \widetilde M_+(P,g)$.

By gauge invariance, the self-dual Yang--Mills energy function is well-defined on the quotient, $\YM_+:\sB^*(P) \to \RR$ (with $q>2$), and we have the inclusion,
\[
M_+^*(P,g) \subset \Crit\YM_+ \cap \sB^*(P),
\]
where $\Crit\YM_+$ denotes the critical set of $\YM_+:\sB^*(P) \to \RR$. Conversely, if $[A] \in \Crit\YM_+$ and $\Coker d_A^+ = 0$, then $[A] \in M_+^*(P,g)$.

We have the following analogue of \cite[Theorem 4]{Feehan_Maridakis_Lojasiewicz-Simon_coupled_Yang-Mills_v6} for the coupled boson Yang--Mills energy function, but with the improvement that $\theta=1/2$, the optimal {\L}ojasiewicz--Simon exponent.

\begin{mainthm}[Optimal {\L}ojasiewicz--Simon inequalities for the self-dual Yang--Mills energy function]
\label{mainthm:Optimal_Lojasiewicz-Simon_inequalities_self-dual_Yang-Mills_energy_function}
Let $(X,g)$ be a closed, four-dimensional, smooth Riemannian manifold, $G$ be a compact Lie group, $P$ be a smooth principal $G$-bundle over $X$, and $q>2$ be a constant. If $A_\infty$ is a $W^{1,q}$ anti-self-dual Yang--Mills connection on $P$ that is \emph{regular},
\begin{equation}
\label{eq:Regular_ASD_connection}
H_{A_\infty}^2
:=
\Coker \left( d_{A_\infty}^+: W^{1,q}(X;T^*X\otimes\ad P) \to L^q(X;\wedge^+(T^*X)\otimes\ad P)\right) = 0,
\end{equation}
then $\YM_+:\sA(P) \to \RR$ is a Morse--Bott function at $A_\infty$ and there are constants $C, Z \in (0, \infty)$ and $\sigma \in (0,1]$, depending on $A_\infty$, $g$, and $G$, with the following significance. If $A$ is a $W^{1,q}$ connection on $P$ obeying the \emph{{\L}ojasiewicz--Simon neighborhood} condition,
\begin{equation}
\label{eq:Lojasiewicz-Simon_gradient_inequality_self-dual_Yang-Mills_energy_L4_neighborhood}
\|A - A_\infty\|_{L^4(X)} < \sigma,
\end{equation}
then the self-dual Yang--Mills energy function \eqref{eq:Self-dual_Yang-Mills_energy_function} obeys the \emph{optimal {\L}ojasiewicz--Simon distance and gradient inequalities},
\begin{align}
\label{eq:Lojasiewicz-Simon_distance_inequality_self-dual_Yang-Mills_energy}
\YM_+(A)^{1/2} &\geq C\|A - A_\infty\|_{W_{A_\infty}^{1,2}(X)},
\\
\label{eq:Lojasiewicz-Simon_gradient_inequality_self-dual_Yang-Mills_energy}
\|\YM_+'(A)\|_{W_{A_\infty}^{-1,2}(X)}
&\geq
Z|\YM_+(A)|^{1/2}.
\end{align}
Moreover, if the isotropy group of $A_\infty$ in $\Aut(P)$ is minimal (the center of $G$), then $\YM_+:\sB^*(P) \to \RR$ is a Morse--Bott function at $[A_\infty]$.
\end{mainthm}

\begin{rmk}[Extension of the Morse--Bott property for Yang--Mills energy functions]
\label{rmk:Morse-Bott_quotient_space}
There are ways of defining the Morse--Bott property of the self-dual Yang--Mills energy function \eqref{eq:Self-dual_Yang-Mills_energy_function} when passing to the quotient by $\Aut(P)$ that are less restrictive than that implied by Theorem \ref{mainthm:Optimal_Lojasiewicz-Simon_inequalities_self-dual_Yang-Mills_energy_function}. For example, one could fix a base point $x_0\in X$ and consider the quotient space $\sB(P,x_0) = (\sA(P)\times P|_{x_0})/\Aut(P)$ of based connections, as in Atiyah and Bott \cite{Atiyah_Bott_1983} or Taubes \cite[p. 328]{TauFrame}, and use the fact that $\sB(P,x_0)$ is necessarily a Banach manifold, since $\Aut(P)$ acts freely in this quotient. Another approach (see Feehan \cite[Definition 7.6]{Feehan_nonlinear_uhlenbeck_estimate}) is to consider the restriction of the self-dual Yang--Mills energy function \eqref{eq:Self-dual_Yang-Mills_energy_function} to a Coulomb-gauge slice $A_\infty+\Ker d_{A_\infty}^*\cap W^{1,q}(X;T^*X\otimes\ad P)$ through $A_\infty$. Similar remarks apply to the full Yang--Mills energy function \eqref{eq:Yang-Mills_energy_function} in Theorem \ref{mainthm:Lojasiewicz-Simon_inequalities_Yang-Mills_energy_flat_Morse-Bott} (near flat connections) and Theorem \ref{mainthm:Lojasiewicz-Simon_inequalities_Yang-Mills_energy_Morse-Bott} (near Yang--Mills connections).
\end{rmk}

We prove Theorem \ref{mainthm:Optimal_Lojasiewicz-Simon_inequalities_self-dual_Yang-Mills_energy_function} in Section \ref{sec:Morse-Bott_property_Yang-Mills_energy_functions}.

\begin{rmk}[Replacement of $W^{-1,2}$ by $L^2$ norm in the gradient inequality]
\label{rmk:Replacement_W_minus_12_by_L2_norm_gradient_inequality}  
Because $W_\Gamma^{1,2}(X;T^*X\otimes \ad P) \subset L^2(X;T^*X\otimes \ad P)$ is a continuous, dense embedding of Sobolev spaces (for any $d \geq 2$), we obtain a continuous embedding of Sobolev spaces,
\[
L^2(X;T^*X\otimes \ad P) \subset W_\Gamma^{-1,2}(X;T^*X\otimes \ad P),
\]
by duality and so the $W^{-1,2}$ norm in \eqref{eq:Lojasiewicz-Simon_gradient_inequality_self-dual_Yang-Mills_energy} can be replaced by the stronger $L^2$ norm, as convenient in the analysis of the gradient flow equation for $\YM$ \cite{Feehan_yang_mills_gradient_flow_v4}.
\end{rmk}

\begin{rmk}[Small self-dual Yang--Mills energy hypothesis]
\label{rmk:Small_self-dual_Yang--Mills_energy_hypothesis}
A more sophisticated (and more difficult) analysis would allow us to replace the small $L^4$ distance hypothesis \eqref{eq:Lojasiewicz-Simon_gradient_inequality_self-dual_Yang-Mills_energy_L4_neighborhood} by a small self-dual Yang--Mills energy hypothesis,
\[
\YM_+(A) < \eps,
\]
for a fixed constant $\eps = \eps(g,G) \in (0,1]$,
and replace the inequality \eqref{eq:Lojasiewicz-Simon_distance_inequality_self-dual_Yang-Mills_energy} by
\[
\YM_+(A)^{1/2} \geq C\dist_{W^{1,2}}\left([A],M_+(P,g)\right),
\]
both of which are more appropriate for Morse--Bott theory. We hope to describe this refinement elsewhere.
\end{rmk}

\begin{rmk}[Two approaches to the proof of the optimal {\L}ojasiewicz--Simon distance and gradient inequality for the self-dual Yang--Mills energy function near a regular anti-self-dual connection]
\label{rmk:Two_proofs_optimal_LS gradient_inequality_self-dual_Yang-Mills_energy}
Theorem \ref{mainthm:Optimal_Lojasiewicz-Simon_inequalities_self-dual_Yang-Mills_energy_function} is proved in Section \ref{subsec:Self-dual_Yang-Mills_energy_function_near_anti-self-dual_connections}.
As we explain there, the gradient inequality \eqref{eq:Lojasiewicz-Simon_gradient_inequality_self-dual_Yang-Mills_energy} may be proved in two different ways:
\begin{inparaenum}[\itshape a\upshape)]
\item by direct geometric analysis using methods of Yang--Mills gauge theory, and
\item by first establishing that $\YM_+$ is Morse--Bott at an anti-self-dual connection that is regular in the sense of \eqref{eq:Regular_ASD_connection} (see Lemma \ref{lem:Morse-Bott_property_self-dual_Yang-Mills_energy}) and then appealing to our Theorem \ref{mainthm:Lojasiewicz-Simon_gradient_inequality_Morse-Bott}, giving the optimal gradient inequality for an abstract Morse--Bott function on a Banach space.
\end{inparaenum}
\end{rmk}

\subsection{Optimal {\L}ojasiewicz--Simon inequalities and Morse--Bott properties for the Yang--Mills energy function near flat connections}
\label{subsec:Optimal_Lojasiewicz-Simon_inequalities_Yang-Mills_energy_Morse-Bott_flat}
We return to the case where $X$ is a manifold of arbitrary dimension $d \geq 2$.
%TODO In theorem below, we omit the connection from the Sobolev spaces but in one after, we include it? Better to just say that all these critical points are smooth and use them as reference connections in Sobolev spaces

\begin{mainthm}[Optimal {\L}ojasiewicz--Simon inequalities and Morse--Bott properties for the Yang--Mills energy function near regular flat connections]
\label{mainthm:Lojasiewicz-Simon_inequalities_Yang-Mills_energy_flat_Morse-Bott}
Let $(X,g)$ be a closed, smooth Riemannian manifold of dimension $d \geq 2$, and $G$ be a compact Lie group, $P$ be a smooth principal $G$-bundle over $X$, and $q \in [2,\infty)$ obeying $q>d/2$ and $r_0>2$ be constants. If $\Gamma$ is a $W^{1,q}$ flat connection on $P$ that is \emph{regular} in the sense that
\begin{equation}
\label{eq:Regular_flat_connection}
H_\Gamma^2(X;\ad P)
:=
\frac{\Ker \left( d_\Gamma: L^q(X;\wedge^2(T^*X)\otimes\ad P) \to W^{-1,q}(X;\wedge^3(T^*X)\otimes\ad P)\right)}
{\Ran \left( d_\Gamma: W^{1,q}(X;T^*X\otimes\ad P) \to L^q(X;\wedge^2(T^*X)\otimes\ad P)\right)}
= 0,
\end{equation}
then $\YM:\sA(P) \to \RR$ is a Morse--Bott function at $\Gamma$ and there are constants $C, Z \in (0, \infty)$ and $\sigma \in (0,1]$, depending on $\Gamma$, $g$, $G$, and $r_0$ with the following significance. If $A$ is a $W^{1,q}$ connection on $P$ obeying the \emph{{\L}ojasiewicz--Simon neighborhood} condition,
\begin{equation}
\label{eq:Lojasiewicz-Simon_gradient_inequality_Yang-Mills_energy_flat_nbhd}
\|A - \Gamma\|_{L^{r_0}(X)} < \sigma,
\end{equation}
where $r_0=d$ when $d\geq 3$ and $r_0>2$ when $d=2$, then the Yang--Mills energy function \eqref{eq:Yang-Mills_energy_function} obeys the \emph{optimal {\L}ojasiewicz--Simon distance and gradient inequalities},
\begin{align}
\label{eq:Lojasiewicz-Simon_distance_inequality_Yang-Mills_energy_flat_local}
\YM(A)^{1/2} &\geq C\|A-\Gamma\|_{W^{1,2}(X)},
\\
\label{eq:Lojasiewicz-Simon_gradient_inequality_Yang-Mills_energy_flat_local}
\|\YM'(A)\|_{W^{-1,2}(X)}
&\geq
Z|\YM(A)|^{1/2}.
\end{align}
Moreover, if the isotropy group of $\Gamma$ in $\Aut(P)$ is minimal (the center of $G$), then $\YM:\sB^*(P) \to \RR$ is a Morse--Bott function at $[\Gamma]$.
\end{mainthm}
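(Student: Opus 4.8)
The plan is to deduce Theorem~\ref{mainthm:Lojasiewicz-Simon_inequalities_Yang-Mills_energy_flat_Morse-Bott} from the abstract {\L}ojasiewicz--Simon gradient inequality for $C^2$ Morse--Bott functions, namely Theorem~\ref{mainthm:Lojasiewicz-Simon_gradient_inequality_Morse-Bott}, together with the observation that the regularity hypothesis \eqref{eq:Regular_flat_connection} forces $\sE$ to be Morse--Bott at $\Gamma$. Concretely, I would fix once and for all the $C^\infty$ structure by choosing $\Gamma$ itself (or a $C^\infty$ connection gauge-equivalent to it, using that a $W^{1,q}$ flat connection is gauge-equivalent to a smooth one) as the background connection, and work on the affine Banach space $\sA(P) = \Gamma + W^{1,q}(X;\Lambda^1\otimes\ad P)$. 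Writing $A = \Gamma + a$, the Yang--Mills energy becomes $\sE(\Gamma+a) = \tfrac12\|d_\Gamma a + \tfrac12[a,a]\|_{L^2(X)}^2$ since $F_\Gamma = 0$, and the gradient map is $\sM(a) = d_A^*F_A = d_\Gamma^* d_\Gamma a + (\text{quadratic and cubic terms in } a)$. This $\sM$ sends $W^{1,q}$ to $W^{-1,q'}$ (or, on the dense subspace, to $L^2$), and one identifies the relevant Banach space data for Theorem~\ref{mainthm:Lojasiewicz-Simon_gradient_inequality_Morse-Bott} as $\sX = W_\Gamma^{1,q}$, $\sY = W_\Gamma^{-1,q'}$, $\sG = W_\Gamma^{1,2}$, $\sH = W_\Gamma^{-1,2}$ (with the embeddings $\sX\subset\sG$, $\sY\subset\sH\subset\sG^*\subset\sX^*$ holding because $q>d/2\geq 1$). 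The linearization $\sM'(0) = d_\Gamma^* d_\Gamma = \Delta_\Gamma$ acting on one-forms, extended to $\sM_1(0) = \Delta_\Gamma : W_\Gamma^{1,2}\to W_\Gamma^{-1,2}$, and one needs: its kernel $K$ has a closed complement in $\sX$ (and $\sK := \Ker\sM_1(0)$ has a closed complement $\sG_0$ in $\sG$ with $\sX_0\subset\sG_0$), and $\Ran\sM_1(0)$ is closed in $\sH$. These follow from elliptic theory for the Hodge Laplacian twisted by the flat connection $\Gamma$: $\Delta_\Gamma$ is a self-adjoint elliptic operator, so $\Ker\Delta_\Gamma = \cH_\Gamma^1(X;\ad P)$ is finite-dimensional, its $L^2$-orthogonal complement is closed and $\Delta_\Gamma$-invariant, and the range is closed with $W_\Gamma^{-1,2} = \Ran\Delta_\Gamma \oplus \cH_\Gamma^1$.

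The first substantive step is the Morse--Bott claim: I would show $\Crit\sE \cap \sU$ is, for a small $L^{r_0}$-neighborhood $\sU$ of $\Gamma$, a smooth submanifold equal locally to the moduli space of flat connections, with tangent space at $\Gamma$ exactly $\Ker\sE''(\Gamma)$. The Hessian at $\Gamma$ is $\sE''(\Gamma)(a,a) = \|d_\Gamma a\|_{L^2}^2$, so $\Ker\sE''(\Gamma) = \Ker d_\Gamma$ on one-forms. Modulo the gauge orbit (i.e.\ after imposing the Coulomb slice $d_\Gamma^* a = 0$) this kernel is $\cH_\Gamma^1(X;\ad P) = \Ker d_\Gamma \cap \Ker d_\Gamma^*$, the space of $\Gamma$-harmonic one-forms, which is the formal tangent space to the flat moduli space at $[\Gamma]$. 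The regularity hypothesis $H_\Gamma^2 = 0$ is precisely the statement that the Kuranishi obstruction map vanishes, so that every harmonic one-form in $\cH_\Gamma^1$ is integrable to an actual curve of flat connections; this is the standard deformation-theory argument (Atiyah--Hitchin--Singer, Donaldson--Kronheimer \S4.2) showing that when $H^2 = 0$ the moduli space near $[\Gamma]$ is a smooth manifold of dimension $\dim\cH_\Gamma^1$ cut out by the Coulomb condition, whence the nondegeneracy \eqref{eq:Nondegenerate_critical_submanifold} holds and $\sE$ is Morse--Bott at $\Gamma$. I would record this as a lemma (the excerpt already refers to it as Lemma~\ref{lem:Morse-Bott_property_Yang-Mills_energy_near_flat_connection}) and invoke it. One technical point to check carefully here is that $\sE$ is genuinely $C^2$ (indeed $C^\infty$) as a map on $W_\Gamma^{1,q}$ with $q>d/2$, and that $\sM$ is $C^1$ with the claimed extension property $\sU\ni a\mapsto \sM_1(a)\in\sL(\sG,\sH)$ continuous; this is a Sobolev-multiplication bookkeeping exercise using $q > d/2$ to control the $[a,a]$ and $[a,\cdot]$ terms in $L^2$ and $W^{-1,2}$.

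With the Morse--Bott property and the Banach-space hypotheses in hand, Theorem~\ref{mainthm:Lojasiewicz-Simon_gradient_inequality_Morse-Bott} delivers constants $Z$ and $\sigma_0$ so that $\|\sM(A)\|_{W_\Gamma^{-1,2}} \geq Z|\sE(A) - \sE(\Gamma)|^{1/2} = Z\,\sE(A)^{1/2}$ (using $\sE(\Gamma) = 0$) whenever $\|A-\Gamma\|_{W_\Gamma^{1,q}} < \sigma_0$. Since $\|\sM(A)\|_{W_\Gamma^{-1,2}} = \|\sE'(A)\|_{W_\Gamma^{-1,2}}$ (the gradient map represents the differential with respect to the $L^2$-pairing), this is exactly \eqref{eq:Lojasiewicz-Simon_gradient_inequality_Yang-Mills_energy_flat_local}. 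To convert the abstract $W^{1,q}$-neighborhood condition into the stated $L^{r_0}$-neighborhood condition \eqref{eq:Lojasiewicz-Simon_gradient_inequality_Yang-Mills_energy_flat_nbhd}, I would combine the smallness of $\|A-\Gamma\|_{L^{r_0}}$ with an elliptic/Coulomb-gauge estimate: after applying a gauge transformation (continuous in the relevant topologies, small when $A$ is close to $\Gamma$) to place $A - \Gamma$ in $\Gamma$-Coulomb gauge, one has $\|A-\Gamma\|_{W_\Gamma^{1,2}} \lesssim \|d_\Gamma^*(A-\Gamma)\|_{L^2} + \|F_A\|_{L^2} + (\text{l.o.t.})$, and the $L^{r_0}$ smallness with $r_0 = d$ (resp.\ $r_0>2$ when $d=2$) is exactly the borderline integrability that controls the quadratic nonlinearity $[a,a]$ in $L^2$; bootstrapping via Theorem~\ref{mainthm:Uhlenbeck_Chern_corollary_4-3_1_lessthan_p_lessthan_d} with $p=2$ then yields the $W^{1,q}$ bound and in particular the distance inequality \eqref{eq:Lojasiewicz-Simon_distance_inequality_Yang-Mills_energy_flat_local} $\sE(A)^{1/2}\geq C\|A-\Gamma\|_{W_\Gamma^{1,2}}$, which here follows more directly from the coercivity $\|d_\Gamma a\|_{L^2}^2 + \|d_\Gamma^* a\|_{L^2}^2 \gtrsim \|a\|_{W_\Gamma^{1,2}}^2$ on the $\cH_\Gamma^1$-complement once $a$ is in Coulomb gauge and $\|a\|_{L^{r_0}}$ is small enough to absorb $[a,a]$. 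Finally, the statement about $\sE:\sB^*(P)\to\RR$ being Morse--Bott at $[\Gamma]$ when the isotropy of $\Gamma$ is minimal is the descent of the Morse--Bott property to the quotient, using that near such $\Gamma$ the quotient $\sB^*(P)$ is a smooth Banach manifold with local model the Coulomb slice $\{a : d_\Gamma^* a = 0\}$, on which $\sE$ and its Hessian restrict with kernel $\cH_\Gamma^1 = T_{[\Gamma]}M_0(P)$; I would cite the slice construction of Donaldson--Kronheimer \S4.2 / Freed--Uhlenbeck Ch.~3. \emph{The main obstacle} I anticipate is the passage from the hypothesis \eqref{eq:Lojasiewicz-Simon_gradient_inequality_Yang-Mills_energy_flat_nbhd} phrased in the weak $L^{r_0}$-norm to the $W^{1,q}$-smallness required to invoke the abstract theorem, since $L^{r_0}$ does not control $W^{1,q}$ directly — this forces the Coulomb-gauge-fixing plus elliptic bootstrap detour and is exactly the place where the borderline Sobolev exponent $r_0 = d$ (equivalently, $L^{d/2}$ control of curvature) is essential and where Theorem~\ref{mainthm:Uhlenbeck_Chern_corollary_4-3_1_lessthan_p_lessthan_d} does the heavy lifting.
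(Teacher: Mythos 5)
Your proposal correctly identifies the Morse--Bott structure at $\Gamma$ when $H_\Gamma^2 = 0$ (this matches the paper's Lemma \ref{lem:Morse-Bott_property_Yang-Mills_energy_near_flat_connection}) and the Banach space data appropriate for an application of Theorem \ref{mainthm:Lojasiewicz-Simon_gradient_inequality_Morse-Bott}. But there is a genuine gap in the reduction step. The neighborhood hypothesis \eqref{eq:Lojasiewicz-Simon_gradient_inequality_neighborhood_Morse-Bott} of Theorem \ref{mainthm:Lojasiewicz-Simon_gradient_inequality_Morse-Bott} is $\|x - x_\infty\|_\sX < \sigma$, i.e.\ $W_\Gamma^{1,q}$-smallness of $a := A - \Gamma$ (or at least $\sG = W_\Gamma^{1,2}$-smallness), whereas Theorem \ref{mainthm:Lojasiewicz-Simon_inequalities_Yang-Mills_energy_flat_Morse-Bott} assumes only the much weaker $L^{r_0}$-smallness \eqref{eq:Lojasiewicz-Simon_gradient_inequality_Yang-Mills_energy_flat_nbhd}. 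These are not equivalent, even up to gauge: $\|a\|_{L^{r_0}}$ small is compatible with $\|a\|_{W^{1,q}}$ arbitrarily large, by a high-frequency, small-amplitude oscillation. The elliptic bootstrap you propose cannot bridge this. In the first place, Theorem \ref{mainthm:Uhlenbeck_Chern_corollary_4-3_1_lessthan_p_lessthan_d} requires $\|F_A\|_{L^{d/2}} \leq \eps$, which does not follow from $\|a\|_{L^{r_0}} < \sigma$: in $F_A = d_\Gamma a + a\wedge a$, the quadratic term is $L^{d/2}$-small but the linear term $d_\Gamma a$ involves a derivative of $a$ and is not controlled by $\|a\|_{L^{r_0}}$. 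In the second place, even granted a Coulomb-slice estimate, what one obtains is a \emph{relation} $\|a\|_{W_\Gamma^{1,p}} \leq C\|F_A\|_{L^p}$, not a smallness bound, because $\|F_A\|_{L^p}$ is not assumed small under the hypotheses of this theorem. The paper acknowledges precisely this limitation: in Section \ref{subsec:Yang-Mills_energy_function_near_flat_connections} it presents the abstract-theorem route only as an alternative proof of the gradient inequality \eqref{eq:Lojasiewicz-Simon_gradient_inequality_Yang-Mills_energy_flat_local} in the range $2\leq d\leq 4$ and \emph{after strengthening} the neighborhood hypothesis to $\|A - \Gamma\|_{W^{1,2}(X)} < \sigma$.

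The paper's actual proof of Theorem \ref{mainthm:Lojasiewicz-Simon_inequalities_Yang-Mills_energy_flat_Morse-Bott} is direct and does not pass through Theorem \ref{mainthm:Lojasiewicz-Simon_gradient_inequality_Morse-Bott}. After reducing to $\Gamma$ smooth, one splits $a$ using the $L^2$-orthogonal Hodge decomposition $W_\Gamma^{1,q} = \Ker(d_\Gamma+d_\Gamma^*)\oplus\Ran d_\Gamma\oplus\Ran d_\Gamma^*$, writing $a = a_\perp + a_\parallel$ with $a_\perp \in \Ran d_\Gamma^*$ and $a_\parallel \in \Ker d_\Gamma$. When $a_\parallel = 0$, then $a = d_\Gamma^* v$ satisfies the Coulomb condition automatically (since $d_\Gamma^* d_\Gamma^* = 0$) --- no gauge transformation of $A$ is performed; the elliptic a priori estimate $\|a\|_{W_\Gamma^{1,p}}\leq C\|d_\Gamma a\|_{L^p}$ together with $F_A = d_\Gamma a + a\wedge a$ and absorption of the quadratic term (using $L^{r_0}$-smallness as a multiplicative factor, never to achieve $W^{1,q}$-smallness of $a$) yield \eqref{eq:Lojasiewicz-Simon_distance_inequality_Yang-Mills_energy_flat_local} with $p=2$; the gradient inequality \eqref{eq:Lojasiewicz-Simon_gradient_inequality_Yang-Mills_energy_flat_local} is then obtained by testing $d_A^* F_A$ against $a$ itself and using $d_A a = F_A + a\wedge a$. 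When $a_\parallel \neq 0$, the paper does not gauge-transform $A$ either: it replaces the reference flat connection $\Gamma$ by a nearby flat connection $\tilde\Gamma$ in the same affine space, chosen via the $L^2$-normal tubular neighborhood of the flat locus (which exists because $H_\Gamma^2 = 0$) so that $A - \tilde\Gamma$ has no component in $\Ker d_{\tilde\Gamma}$. Your instinct that the tubular-neighborhood and Coulomb-slice structures play a role is correct, but the operative move is to shift the reference flat connection rather than to gauge-transform $A$ into Coulomb gauge relative to a fixed $\Gamma$; this also avoids the subtlety that a gauge transformation would change the dual norm $\|\sE'(A)\|_{W_\Gamma^{-1,2}}$ appearing in \eqref{eq:Lojasiewicz-Simon_gradient_inequality_Yang-Mills_energy_flat_local}.
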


We prove Theorem \ref{mainthm:Lojasiewicz-Simon_inequalities_Yang-Mills_energy_flat_Morse-Bott} in Section \ref{sec:Morse-Bott_property_Yang-Mills_energy_functions}.

When $X$ has dimension two, then Poincar\'e duality (for example, see \cite[Lemma 2.1]{Ho_Wilkin_Wu_2019}) implies that
%COMMENT PF-10-8-2018 Replace H^2 by (H^2)^*)
$H_\Gamma^2(X;\ad P) \cong H_\Gamma^0(X;\ad P)$ (this observation is used in \cite[p. 189]{MMR}), where
\[
H_\Gamma^0(X;\ad P)
:=
\Ker \left( d_\Gamma: W^{2,q}(X;\ad P) \to W^{1,q}(X;T^*X\otimes\ad P)\right).
\]
Recall \cite[p. 132]{DK} that $H_\Gamma^0(X;\ad P)$ is isomorphic to the tangent space to the isotropy group of $\Gamma$ in $\Aut(P)$. In the special case that $G = \SU(2)$ or $\SO(3)$, then a connection $A$ on $P$ is \emph{irreducible} if and only if the isotropy of $A$ in $\Aut(P)$ is the center of $G$ \cite[p. 133]{DK}. Thus, if $G = \SU(2)$ (with center $\ZZ/2\ZZ$) and $\dim X = 2$ (with $\genus(X) \geq 1$) and $\Gamma$ is an irreducible flat connection, then $H_\Gamma^0(X;\ad P) = 0$ and consequently $H_\Gamma^2(X;\ad P) = 0$, so $[\Gamma]$ is a smooth point of $M_0(P)$. In particular, the moduli space of gauge equivalence classes of irreducible flat connections on $P$, namely $M_0^*(P) := M_0(P)\cap\sB^*(P)$, is a smooth manifold (of dimension $6\genus(X)-6$ \cite{Sengupta_1998jgp}) and therefore $\YM:\sA^*(P)\to\RR$ is a Morse--Bott function near the critical set,
\[
\widetilde M_0^*(P) := \{A \in \sA^*(P): F_A = 0\},
\]
and $\YM:\sB^*(P)\to\RR$ is a Morse--Bott function near the critical set $M_0^*(P)$, in the sense of Definition \ref{defn:Morse-Bott_function} for both cases. When $X$ is a Riemann surface, there is a vast literature devoted to the study of $M_0(P)$ from many different perspectives, often in the context of its interpretation as the character variety, $\Hom(\pi_1(X),G)/G$, of representations of the fundamental group $\pi_1(X)$ in $G$ \cite[Proposition 2.2.3]{DK} or in the context of the symplectic structure on $\sA(P)$ and interpretation of (a multiple of) the map $A\mapsto F_A$ as a moment map. We refer to Section \ref{subsec:Yang-Mills_energy_function_near_yang-mills_connections_over_Riemann_surfaces} and Atiyah and Bott \cite{Atiyah_Bott_1983} and the many articles that cite \cite{Atiyah_Bott_1983} for further details and discussions of the Morse--Bott properties of $\YM$ over Riemann surfaces from a variety of perspectives.

When $X$ has dimension three and is a circle bundle over a closed Riemann surface, the geometry of $\Hom(\pi_1(X),\SU(2))/\SU(2)$ is described by Morgan, Mrowka, and Ruberman in \cite[Chapter 13]{MMR}.

\subsection{Optimal {\L}ojasiewicz--Simon inequalities and Morse--Bott properties for the Yang--Mills energy function near arbitrary Yang--Mills connections}
\label{subsec:Optimal_Lojasiewicz-Simon_inequalities_Yang-Mills_energy_Morse-Bott}
One calls $A$ a \emph{Yang-Mills connection} if it is a critical point of the Yang--Mills energy function \eqref{eq:Yang-Mills_energy_function} on the affine space of $W^{1,q}$ connections, $\YM:\sA(P)\to\RR$ on $P$, that is, $\YM'(A) = 0$ and so by \eqref{eq:Gradient_Yang-Mills_energy_function}, obeys
\begin{equation}
\label{eq:Yang_Mills}
d_A^*F_A = 0
\end{equation}
in a sense that depends on the regularity of $A$, weakly if $A$ is $W^{1,q}$ or strongly if $A$ is $W^{2,q}$, with $q \in [2,\infty)$ obeying $q > d/2$. We define
\begin{equation}
\label{:Yang-Mills_energy_function_critical_set}
\Crit\YM := \{A \in \sA(P): \YM'(A) = 0\}.
\end{equation}
The Yang--Mills energy function, $\YM:\sA(P)\to\RR$, in \eqref{eq:Yang-Mills_energy_function} has the Hessian operator, $\YM(A):T_A\sA(P) \to T_A^*\sA(P)$, at $A \in \sA(P)$ given by
\begin{equation}
\label{eq:Hessian_Yang-Mills_energy_function}
\YM''(A)(a)b = (d_Aa,d_Ab)_{L^2(X)} + (F_A,a\wedge b)_{L^2(X)},
\end{equation}
for all $a, b \in W^{1,q}(X;T^*X\otimes\ad P)$. We now state a partial generalization of Theorem \ref{mainthm:Lojasiewicz-Simon_inequalities_Yang-Mills_energy_flat_Morse-Bott}.

\begin{mainthm}[Optimal {\L}ojasiewicz--Simon inequalities when the Yang--Mills energy function is Morse--Bott near a Yang--Mills connection]
\label{mainthm:Lojasiewicz-Simon_inequalities_Yang-Mills_energy_Morse-Bott}
Let $(X,g)$ be a closed, smooth Riemannian manifold of dimension $d\geq 2$ and $G$ be a compact Lie group, $P$ be a smooth principal $G$-bundle over $X$, and $q \in [2,\infty)$ obeying $q>d/2$ be a constant. Let $A_\infty$ be a $W^{1,q}$ Yang--Mills connection on $P$ and assume that $\YM:\sA(P) \to \RR$ is a Morse--Bott function at $A_\infty$ in the sense of Definition \ref{defn:Morse-Bott_function}, so $\Crit\YM\cap\, \sU_{A_\infty}$ is a $C^\infty$ relatively open submanifold of the space $\sA(P)$ of $W^{1,q}$ connections on $P$ for some open neighborhood $\sU_{A_\infty}$ of $A_\infty$ and
\[
\Ker\YM''(A_\infty) \cap W_{A_\infty}^{1,q}(X;T^*X\otimes \ad P) = T_{A_\infty}\Crit\YM.
\]
Then there are constants $Z \in (0, \infty)$ and $\sigma \in (0,1]$, depending on $A_\infty$, $g$, and $G$ with the following significance. If $A$ is a $W^{1,q}$ connection on $P$ that obeys the \emph{{\L}ojasiewicz--Simon neighborhood} condition,
\begin{equation}
\label{eq:Lojasiewicz-Simon_gradient_inequality_Yang-Mills_energy_nbhd}
\|A - A_\infty\|_{W^{1,2}_{A_\infty}(X)} < \sigma,
\end{equation}
then the Yang--Mills energy \eqref{eq:Yang-Mills_energy_function} obeys the \emph{optimal {\L}ojasiewicz--Simon gradient inequality},
\begin{equation}
\label{eq:Lojasiewicz-Simon_gradient_inequality_Yang-Mills_energy_local}
\|\YM'(A)\|_{W_{A_\infty}^{-1,2}(X)}
\geq
Z|\YM(A)|^{1/2}.
\end{equation}
\end{mainthm}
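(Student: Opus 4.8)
The plan is to deduce Theorem \ref{mainthm:Lojasiewicz-Simon_inequalities_Yang-Mills_energy_Morse-Bott} from Corollary \ref{maincor:Lojasiewicz-Simon_gradient_inequality_Morse-Bott_YisXdual}---equivalently, Theorem \ref{mainthm:Lojasiewicz-Simon_gradient_inequality_Morse-Bott} with $\sG = \sX$ and $\sH = \sY = \sX^*$---applied with $\sX := W^{1,2}_{A_\infty}(X;\Lambda^1\otimes\ad P)$ and the affine space of connections identified with $A_\infty + \sX$ via $A \mapsto a := A - A_\infty$. This choice of $\sX$ is available precisely because $d \in \{2,3,4\}$: the continuous Sobolev embedding $W^{1,2}_{A_\infty}(X;\Lambda^1\otimes\ad P) \subset L^4(X;\Lambda^1\otimes\ad P)$ and the boundedness of the multiplication $W^{1,2}\times W^{1,2}\to L^2$ imply that $\sE(A_\infty + a) = \tfrac{1}{2}\|F_{A_\infty} + d_{A_\infty}a + \tfrac{1}{2}[a,a]\|_{L^2(X)}^2$ is a polynomial of degree at most four in $a$, hence a $C^\infty$ function on $\sX$, with differential \eqref{eq:Gradient_Yang-Mills_energy_function} taking values in $\sX^* = W^{-1,2}_{A_\infty}(X;\Lambda^1\otimes\ad P)$ and Hessian \eqref{eq:Hessian_Yang-Mills_energy_function} at $A_\infty$; in particular $\sE'$ is its own $C^\infty$ gradient map in the sense of Definition \ref{defn:Huang_2-1-1}. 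By the regularity theory for Yang--Mills connections (see, e.g., \cite{DK}), after an initial gauge transformation---which changes neither $\sE$ nor any of the $A_\infty$-covariant Sobolev norms in the statement and which preserves the Morse--Bott property---I would reduce to the case that $A_\infty$ is $C^\infty$; the same regularity theory, via an $\varepsilon$-regularity argument using that the local $L^2$-norms of $F_{A_\infty}$ are small on small balls, shows that the Yang--Mills connections in a sufficiently small $W^{1,2}$-neighborhood of $A_\infty$ are themselves $C^\infty$, so that the Morse--Bott hypothesis of Theorem \ref{mainthm:Lojasiewicz-Simon_inequalities_Yang-Mills_energy_Morse-Bott}---phrased for the $W^{1,q}$ connection space---supplies the Morse--Bott hypothesis for $\sE$ at $A_\infty$ relative to $\sX = W^{1,2}_{A_\infty}$ required by Corollary \ref{maincor:Lojasiewicz-Simon_gradient_inequality_Morse-Bott_YisXdual}.

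The substance of the argument is the verification of the two linear hypotheses of Corollary \ref{maincor:Lojasiewicz-Simon_gradient_inequality_Morse-Bott_YisXdual}: that $K := \Ker\sE''(A_\infty) = T_{A_\infty}\Crit(\sE)$ has a closed complement in $\sX$, and that $\Ran\sE''(A_\infty) \subset \sX^*$ is closed. The key structural point is that $\sE''(A_\infty)$ is \emph{not} a Fredholm operator---by gauge invariance of $\sE$ its kernel contains the infinite-dimensional space $\Ran\big(d_{A_\infty}\colon W^{2,2}(X;\ad P)\to W^{1,2}(X;\Lambda^1\otimes\ad P)\big)$ of infinitesimal gauge transformations---which is exactly why the non-Fredholm generalization provided by Theorem \ref{mainthm:Lojasiewicz-Simon_gradient_inequality_Morse-Bott} (rather than the Fredholm results of \cite{Feehan_Maridakis_Lojasiewicz-Simon_harmonic_maps_v6}) is needed. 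I would first establish the Coulomb (Hodge-type) decomposition $\sX = \Ran(d_{A_\infty}) \oplus \Ker\big(d_{A_\infty}^*\colon W^{1,2}\to L^2\big)$ as a topological direct sum, using that $d_{A_\infty}^*d_{A_\infty}$ on $W^{2,2}(X;\ad P)$ is elliptic with smooth coefficients and kernel $\Ker d_{A_\infty}$. On the Coulomb slice $\Ker d_{A_\infty}^*$ one has $d_{A_\infty}^*d_{A_\infty} = \Delta_{A_\infty}$, so $\sE''(A_\infty)$ restricts there to the \emph{elliptic} operator $\Delta_{A_\infty} + (\text{zeroth order})$; hence its kernel $\mathcal{H}_{A_\infty}^1 := K \cap \Ker d_{A_\infty}^*$ is finite-dimensional and consists of smooth forms, it has a closed ($L^2$-orthogonal) complement $\sX_0$ inside the slice, and $K = \Ran(d_{A_\infty}) \oplus \mathcal{H}_{A_\infty}^1$ with $\sX_0$ a closed complement of $K$ in $\sX$. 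The same ellipticity shows that $\sE''(A_\infty)$ is bounded below on $\sX_0$; since $\sE''(A_\infty)$ annihilates $\Ran(d_{A_\infty})$ and $\mathcal{H}_{A_\infty}^1$, this gives $\Ran\sE''(A_\infty) = \sE''(A_\infty)(\sX_0)$, a closed subspace of $W^{-1,2}_{A_\infty}(X;\Lambda^1\otimes\ad P)$. (This step runs parallel to the corresponding one for flat connections behind Theorem \ref{mainthm:Lojasiewicz-Simon_inequalities_Yang-Mills_energy_flat_Morse-Bott}, but with one difference: for a flat $\Gamma$ one has $d_\Gamma^2 = 0$ and may invoke standard twisted Hodge theory, whereas here $d_{A_\infty}^2 = [F_{A_\infty},\,\cdot\,] \neq 0$ and one must rely on ellipticity of the Hessian on the Coulomb slice instead.)

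With these verifications in place, Corollary \ref{maincor:Lojasiewicz-Simon_gradient_inequality_Morse-Bott_YisXdual} supplies constants $Z \in (0,\infty)$ and $\sigma \in (0,1]$ such that $\|A - A_\infty\|_{W^{1,2}_{A_\infty}(X)} < \sigma$ implies $\|\sE'(A)\|_{W^{-1,2}_{A_\infty}(X)} \geq Z\,|\sE(A) - \sE(A_\infty)|^{1/2}$, which---$\sE$ being constant on the connected critical submanifold through $A_\infty$---is the gradient inequality \eqref{eq:Lojasiewicz-Simon_gradient_inequality_Yang-Mills_energy_local}. I expect the main obstacle to be the proof that $K$ splits and that $\Ran\sE''(A_\infty)$ is closed: the underlying ideas are elementary, but one must track carefully the interaction of the gauge-orbit directions with the finitely many genuinely harmonic directions, and, when $d=4$, work around the borderline Sobolev embedding $W^{1,2}\subset L^4$ and the failure of $W^{2,2}\subset C^0$---which is why the whole analysis is cleanest when carried out on the Coulomb slice, where the Yang--Mills Hessian is elliptic.
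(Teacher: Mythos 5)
Your proposal is correct in outline and follows the same broad strategy as the paper---apply the abstract Morse--Bott theorem (Theorem \ref{mainthm:Lojasiewicz-Simon_gradient_inequality_Morse-Bott}) to the Yang--Mills functional on a suitable Sobolev scale---but you make explicit what the paper deliberately leaves implicit. The paper's proof is a single paragraph: it invokes \cite[Theorem 3]{Feehan_Maridakis_Lojasiewicz-Simon_coupled_Yang-Mills_v4} (with $p=2$, $d/2 \leq p \leq q$, hence $2 \leq d \leq 4$) to supply the entire functional--analytic infrastructure---the $C^1$ (in fact analytic) structure of the gradient map $\sE'$ on $W^{1,2}_{A_\infty}$ with values in $W^{-1,2}_{A_\infty}$, the Coulomb--gauge reduction making the Hessian elliptic, and the resulting splitting and closed--range properties---and then cites the Morse--Bott hypothesis plus Theorem \ref{mainthm:Lojasiewicz-Simon_gradient_inequality_Morse-Bott} to upgrade to $\theta=1/2$. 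You instead verify the hypotheses of Corollary \ref{maincor:Lojasiewicz-Simon_gradient_inequality_Morse-Bott_YisXdual} directly via the Hodge decomposition $\sX = \Ran d_{A_\infty} \oplus \Ker d_{A_\infty}^*$, the gauge--invariance identity $\Ran d_{A_\infty} \subset \Ker\sE''(A_\infty)$, and ellipticity of the Hessian on the Coulomb slice. This is genuinely more self-contained, and you correctly identify the non-Fredholm character of $\sE''(A_\infty)$ on the full $W^{1,2}$ space as the reason Theorem \ref{mainthm:Lojasiewicz-Simon_gradient_inequality_Morse-Bott} (rather than the Fredholm results in \cite{Feehan_Maridakis_Lojasiewicz-Simon_harmonic_maps_v6}) is required.

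Two points deserve more care than you give them. First, the Morse--Bott hypothesis of the theorem is posed in $W^{1,q}$ (with $q > d/2$, so $q > 2$ when $d = 4$), while your choice $\sX = W^{1,2}_{A_\infty}$ requires the same structure in $W^{1,2}$. Your $\varepsilon$-regularity remark shows nearby critical points are smooth, but smoothness of the \emph{points} does not by itself transfer the \emph{submanifold chart} from the finer $W^{1,q}$ topology to the coarser $W^{1,2}$ one. The clean way through is the Coulomb--slice Kuranishi model: the critical locus near $A_\infty$, modulo gauge, is cut out of the finite-dimensional space $\mathcal{H}^1_{A_\infty}$ (consisting of smooth forms), and the Morse--Bott condition amounts to smoothness of this finite-dimensional zero set, a statement independent of the ambient Sobolev scale; this finite-dimensional local model is exactly what is set up in \cite[Theorem 3]{Feehan_Maridakis_Lojasiewicz-Simon_coupled_Yang-Mills_v4}. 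Second, the assertion that $\sE''(A_\infty)$ is \emph{bounded below} on $\sX_0 = \Ker d_{A_\infty}^* \cap (\mathcal{H}^1_{A_\infty})^\perp$---from which closed range follows---is true but should be derived from an elliptic estimate for the genuinely elliptic operator $\sE''(A_\infty) + d_{A_\infty} d_{A_\infty}^*$ on the full space (which coincides with $\sE''(A_\infty)$ on the slice), together with Rellich compactness to remove the lower-order term; as written, ``restricts there to an elliptic operator'' glosses over the fact that ellipticity is a property on the ambient space, not on the slice. With those two repairs your argument becomes a fully explicit version of what the paper compresses into a citation.
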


We prove Theorem \ref{mainthm:Lojasiewicz-Simon_inequalities_Yang-Mills_energy_Morse-Bott} in Section \ref{sec:Morse-Bott_property_Yang-Mills_energy_functions}.

\begin{rmk}[Optimal {\L}ojasiewicz--Simon gradient inequality for the Yang--Mills energy function over Riemann surfaces]
R\r{a}de has shown (see \cite[Proposition 7.2]{Rade_1992} ) that if $d=2$ and $G=\U(n)$ and $A_\infty$ is an irreducible Yang--Mills connection, then inequality \eqref{eq:Lojasiewicz-Simon_gradient_inequality_Yang-Mills_energy_local} in Theorem \ref{mainthm:Lojasiewicz-Simon_inequalities_Yang-Mills_energy_Morse-Bott} holds. His proof (see \cite[Section 10]{Rade_1992}) is very different from our proof of \eqref{eq:Lojasiewicz-Simon_gradient_inequality_Yang-Mills_energy_local} and does not proceed by showing that $\YM$ is Morse--Bott at $A_\infty$. 
\end{rmk}

I am grateful to the referee for pointing out that the following theorem holds, thus answering a question that I had posed in an earlier version of this article. 

\begin{mainthm}[Optimal {\L}ojasiewicz--Simon inequalities and Morse--Bott properties for Yang--Mills energy functions over Riemann surfaces]
\label{mainthm:Lojasiewicz-Simon_inequalities_Yang-Mills_energy_irreducible_Riemann_surface}  
Continue the hypotheses of Theorem \ref{mainthm:Lojasiewicz-Simon_inequalities_Yang-Mills_energy_Morse-Bott}, but assume that $G=\U(n)$ with $n\geq 2$ and $d=2$ and replace the assumption that the Yang--Mills connection $A_\infty$ is Morse--Bott by the assumption that it has trivial isotropy subgroup in $\Aut(P)$. Then $A_\infty$ is Morse--Bott and the remaining conclusions of Theorem \ref{mainthm:Lojasiewicz-Simon_inequalities_Yang-Mills_energy_Morse-Bott} continue to hold.
\end{mainthm}

We shall give two proofs of Theorem \ref{mainthm:Lojasiewicz-Simon_inequalities_Yang-Mills_energy_irreducible_Riemann_surface} in Section \ref{subsec:Yang-Mills_energy_function_near_yang-mills_connections_over_Riemann_surfaces} --- an indirect proof based on R\r{a}de \cite[Proposition 7.2]{Rade_1992} and Feehan \cite[Theorem 2]{Feehan_lojasiewicz_inequality_all_dimensions_morse-bott} and a direct proof due to the referee.

In Theorem \ref{mainthm:Optimal_Lojasiewicz-Simon_inequalities_self-dual_Yang-Mills_energy_function}, we noted that the self-dual Yang--Mills energy function $\YM$ is Morse--Bott at an anti-self-dual connection $A_\infty$ that is a regular point of the map,
\[
W_{A_\infty}^{1,q}(X;T^*X\otimes \ad P) \ni A \mapsto F_A^+ \in L^q(X;\wedge^+(T^*X)\otimes \ad P),
\]
or, equivalently, that the map
\[
W_{A_\infty}^{1,q}(X;T^*X\otimes \ad P) \ni A \mapsto F_A \in L^q(X;\wedge^2(T^*X)\otimes \ad P),
\]
is transverse to the subspace $L^q(X;\wedge^-(T^*X)\otimes \ad P)$. Similarly, the final conclusion of Theorem \ref{mainthm:Lojasiewicz-Simon_inequalities_Yang-Mills_energy_flat_Morse-Bott} may be rephrased as the assertion that the Yang--Mills energy function $\YM$ is Morse--Bott at a flat connection $\Gamma$ that is a regular point of the map,
\[
W_\Gamma^{1,q}(X;T^*X\otimes \ad P) \ni A \mapsto \Pi_\Gamma F_A \in \Ker d_\Gamma \cap L^q(X;\wedge^2(T^*X)\otimes \ad P),
\]
where $\Pi_\Gamma: L^2(X;\wedge^2(T^*X)\otimes \ad P) \to \Ker d_\Gamma \cap L^2(X;\wedge^2(T^*X)\otimes \ad P)$ is $L^2$-orthogonal projection or, equivalently, that the map
\[
W_\Gamma^{1,q}(X;T^*X\otimes \ad P) \ni A \mapsto \Pi_\Gamma F_A \in L^q(X;\wedge^2(T^*X)\otimes \ad P),
\]
is transverse to the subspace $\Ran d_\Gamma \cap L^q(X;\wedge^2(T^*X)\otimes \ad P)$. The gauge-theoretic concept of $\Gamma$ as a \emph{regular point} of the map $A \mapsto F_A$ is based on the elliptic complex containing $d_\Gamma:\Omega^1(X;\ad P) \to \Omega^2(X;\ad P)$ while the gauge-theoretic concept of $A_\infty$ as a \emph{regular point} of the map $A \mapsto F_A^+$ is based on the elliptic complex containing $d_{A_\infty}^+:\Omega^1(X;\ad P) \to \Omega^+(X;\ad P)$.

At an arbitrary critical point $A_\infty$ for the Yang--Mills energy function $\YM$ over a manifold $X$ of dimension $d \geq 2$, there is no deformation theory that is exactly analogous to those just described for flat or anti-self-dual connections. Koiso \cite[Lemma 1.5]{Koiso_1987} proposes employing the observation that the following ``Dual Bianchi Identity'' (see \cite[p. 235]{DK} or \cite[p. 577]{ParkerGauge}),
\[
d_A^*d_A^*F_A = 0,
\]
which holds for \emph{any} connection $A$, be used to define an elliptic complex for the Yang--Mills equation, perhaps by analogy with viewing the Bianchi Identity, $d_AF_A=0$, as motivation for the concept of a regular point in the zero locus of the map $A\mapsto F_A$. However, as Koiso himself seems to suggest \cite[p. 156]{Koiso_1987}, this does not appear to yield a useful deformation theory for arbitrary solutions $A_\infty$ to the Yang--Mills equation, except possibly when the formal dimension of the critical set is zero at the gauge-equivalence class $[A_\infty]$ \cite[Corollary 2.11]{Koiso_1987}. In the case of flat connections on principal $G$-bundles over Riemannian manifolds, Ho, Wilkin and Wu \cite{Ho_Wilkin_Wu_2019} compare concepts of regular points from the perspectives of gauge theory and character varieties.

\subsection{Morse--Bott functions and moment maps}
We briefly note the well-known relationship between Morse--Bott functions and moment maps and recall the following result due to Atiyah.

\begin{thm}[Moment maps and Morse--Bott functions]
\label{thm:Atiyah}
(See Atiyah \cite{Atiyah_1982} or Nicolaescu \cite[Theorem 3.52]{Nicolaescu_morse_theory}.)
Let $(M,\omega)$ be a compact symplectic manifold equipped with a Hamiltonian action of the torus $T = S^1\times\cdots\times S^1$ ($\nu$ times for $\nu \geq 1$). Let $\mu:M\to\ft^*$ be the moment map of this action, where $\ft$ denotes the Lie algebra of $T$. Then, for every $\xi \in \ft$, the function
\begin{equation}
\phi_\xi:M \ni x \mapsto \langle \xi, \mu(x)\rangle_{\ft\times\ft^*} \in \RR
\end{equation}
is Morse--Bott. The critical submanifolds are $T$-invariant symplectic submanifolds of $M$ and all the Morse indices and co-indices are even.
\end{thm}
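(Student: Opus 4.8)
The plan is to reduce the statement to a local quadratic normal form for the moment map near a fixed point. Write $X_\xi$ for the vector field on $M$ generating the one-parameter subgroup $\RR\ni t\mapsto \exp(t\xi)\in T$. By the defining property of a moment map, $X_\xi$ is (up to sign) the Hamiltonian vector field of $\phi_\xi$, that is $\iota_{X_\xi}\omega = d\phi_\xi$, so since $\omega$ is nondegenerate,
\[
\Crit\phi_\xi = \{x\in M : X_\xi(x) = 0\} = M^{T_\xi},
\]
the fixed-point set of the subtorus $T_\xi := \overline{\{\exp(t\xi):t\in\RR\}}\subset T$ (the orbit of a point under the flow of $X_\xi$ reduces to that point exactly when the point is fixed by $\exp(t\xi)$ for all $t$, hence by $T_\xi$). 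The set $M^{T_\xi}$ is a closed submanifold of $M$, being the fixed-point set of a compact group of diffeomorphisms; fixing a $T$-invariant Riemannian metric compatible with $\omega$ and its associated $T$-invariant almost complex structure $J$, each tangent space $T_pM^{T_\xi}$ is a $J$-invariant, hence symplectic, subspace of $(T_pM,\omega_p)$, so $M^{T_\xi}$ is a symplectic submanifold. It is $T$-invariant because $T$ is abelian and therefore normalizes $T_\xi$. Thus, once the Morse--Bott property is verified, the critical submanifolds are automatically $T$-invariant symplectic submanifolds.

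First I would fix $p\in M^{T_\xi}$ and apply the local normal form for a Hamiltonian torus action near a fixed point (equivariant Darboux combined with Bochner linearization, as used in Atiyah \cite{Atiyah_1982} and Nicolaescu \cite{Nicolaescu_morse_theory}): there is a $T_\xi$-equivariant symplectomorphism from a neighborhood of $p$ onto a neighborhood of $0$ in $(\CC^n,\omega_\std)$, carrying $p$ to $0$ and the $T_\xi$-action to a linear unitary action. Decompose $\CC^n=\bigoplus_j V_j$ into the weight spaces of this action, with weights $\beta_j$ lying in the dual of the Lie algebra of $T_\xi$ and $V_0$ the sum of the trivial weight spaces, so that $V_0 = T_pM^{T_\xi}$. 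For a linear Hamiltonian torus action the moment map is purely quadratic, so in these coordinates
\[
\phi_\xi(z) = \phi_\xi(p) + \tfrac12\sum_j \langle\xi,\beta_j\rangle\,\|z_j\|^2, \qquad z=\textstyle\sum_j z_j,\ z_j\in V_j,
\]
and hence the Hessian of $\phi_\xi$ at $p$ is the quadratic form $q(z)=\sum_j\langle\xi,\beta_j\rangle\|z_j\|^2$ on $T_pM$.

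The key point is the density argument: if $\langle\xi,\beta_j\rangle=0$ for some $j$, then $\exp(t\xi)$ acts trivially on $V_j$ for every $t$, and since $\xi$ generates a dense subgroup of $T_\xi$ this forces $T_\xi$ to act trivially on $V_j$, i.e.\ $\beta_j=0$ and $V_j\subseteq V_0$. Consequently $\Ker q = V_0 = T_pM^{T_\xi} = T_p\Crit\phi_\xi$, which is exactly the nondegeneracy condition \eqref{eq:Nondegenerate_critical_submanifold}; as this holds at every critical point, $\phi_\xi$ is a Morse--Bott function. Finally, each $V_j$ is a $J$-complex subspace of $T_pM$, so the negative (respectively positive) eigenspace of $q$, namely $\bigoplus_{\langle\xi,\beta_j\rangle<0}V_j$ (respectively $\bigoplus_{\langle\xi,\beta_j\rangle>0}V_j$), is a complex subspace, of even real dimension; hence the Morse index and co-index of the critical submanifold through $p$ are even (and the negative normal bundle is symplectic). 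I expect the only step requiring genuine care — and an external citation rather than a bare-hands argument — to be the local normal form: simultaneously straightening $\omega$ to $\omega_\std$ and linearizing the torus action so that the moment map becomes exactly quadratic. Everything else — the identification $\Crit\phi_\xi=M^{T_\xi}$, the density argument, and the parity count — is then elementary.
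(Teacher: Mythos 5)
The paper offers no proof of Theorem \ref{thm:Atiyah}: it is quoted from Atiyah \cite{Atiyah_1982} and Nicolaescu \cite[Theorem 3.52]{Nicolaescu_morse_theory} purely as motivating background in the subsection on moment maps, and none of the paper's own results are derived from it. Your sketch is correct and is essentially the argument of the cited references --- identify $\Crit\phi_\xi$ with the fixed-point set $M^{T_\xi}$ of the closure $T_\xi$ of $\{\exp(t\xi)\}$, invoke the equivariant Darboux/Bochner normal form to make $\phi_\xi$ quadratic in weight coordinates near a critical point, use density of $\{\exp(t\xi)\}$ in $T_\xi$ to conclude that the Hessian kernel is exactly the zero-weight space $V_0 = T_pM^{T_\xi}$, and read off even index and co-index from the $J$-invariance of the nonzero weight spaces --- so there is nothing in the paper to compare it against.
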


If we define $\sE:M\to\RR$ by setting $\sE(x) = \frac{1}{2}\|\mu(x)\|^2$, then $\sE'(x)\xi = \langle \xi, \mu(x)\rangle$ and $\Crit\sE = \mu^{-1}(0)$ and $\sE''(x)(\eta)\xi = \langle \xi, \mu'(x)\eta\rangle$. If $x_0 \in M$ is a regular point in the zero-locus of the moment map $\mu$, then $\mu^{-1}(0)\cap U$ is a relatively open, smooth submanifold of $M$ and $x_0$ is a critical point of $\sE:M\to\RR$ and $T_{x_0}(\mu^{-1}(0)\cap U) = \Ker\sE''(x_0)\cap T_{x_0}M$. In other words, $\sE$ is Morse--Bott at regular points $x_0 \in \mu^{-1}(0)$. Some aspects of Atiyah's Theorem have been extended, at least formally, to more general finite and infinite-dimensional settings and we refer to \cite[Section 6.5.1--3]{DK} for a discussion of moment maps and a survey of examples. The instances most relevant to this article include the
\begin{inparaenum}[\itshape a\upshape)]
\item affine space $\sA(P)$ of $W^{1,q}$ connections on a principal $G$-bundle $P$ over a Riemann surface $X$ and moment map $A\mapsto F_A$ for the Banach Lie group $\Aut(P)$ of gauge transformations \cite{Atiyah_Bott_1983}; and more generally, the
\item affine space $\sA(P)$ of $W^{1,q}$ connections on a principal $G$-bundle $P$ over a \emph{symplectic} manifold $(X,\omega)$ of dimension $2n$ and moment map $A\mapsto F_A\wedge\omega^{n-1}$ for $\Aut(P)$ \cite[Proposition 6.5.8]{DK}.
\end{inparaenum}
Donaldson and Kronheimer also point out that the Atiyah--Hitchin--Drinfel$'$d--Manin (ADHM) description of instantons over $\RR^4$ \cite{ADHM}, \cite[Section 3.3.2]{DK} may be viewed as the zero-locus of a suitably defined moment map \cite[p. 250]{DK}.

For further discussion of Morse--Bott functions, moment maps, and gradient flows in symplectic geometry, we refer to Donaldson and Kronheimer \cite[Section 6.5]{DK}, Kirwan \cite{Kirwan_cohomology_quotients_symplectic_algebraic_geometry, Kirwan_1987}, Lerman \cite{Lerman_2005}, Swoboda \cite{Swoboda_2012}, the references cited therein, and to Atiyah and Bott \cite{Atiyah_Bott_1983} and wealth of articles citing \cite{Atiyah_Bott_1983}.

\subsection{Notation}
\label{subsec:Notation}
For the notation of function spaces, we follow Adams and Fournier \cite{AdamsFournier}. If $V$ is a Riemannian vector bundle with orthogonal, smooth connection $A$ over a smooth Riemannian manifold $X$, we let $W_A^{k,p}(X;V)$ denote its Sobolev space of sections with up to $k$ covariant derivatives in $L^p$. We write $W^{k,p}(X;V)$ if the connection is unimportant for the context or if the Sobolev space is defined using standard definitions for functions on Euclidean space from \cite[Chapter 3]{AdamsFournier} and choices of local coordinate charts for $X$ and local trivializations for $V$. To define Sobolev norms of maps from a manifold into a compact Lie group, $G$, we choose a faithful unitary representation, $G \hookrightarrow \End(\CC^N)$.

If $G$ is compact Lie group and $P$ is a principal $G$-bundle over a manifold $X$, we let $\ad P := P\times_{\ad}\fg$ denote the real vector bundle associated to $P$ by the adjoint representation of $G$ on its Lie algebra, $\Ad:G \ni u \to \Ad_u \in \Aut\fg$. We fix a $G$-invariant inner product on the Lie algebra $\fg$ and thus define a fiber metric on $\ad P$. (When $G$ is semi-simple, one may use the Killing form to define a $G$-invariant inner product $\fg$.) When $X$ is equipped with a smooth Riemannian metric $g$, we let $\Inj(X,g)$ denote the injectivity radius of $(X,g)$ and, when $X$ also has an orientation, denote the corresponding volume form by $d\vol_g$. Unless stated otherwise, all manifolds are assumed to be compact and without boundary (closed), connected, orientable, and smooth.

We let $\NN:=\left\{1,2,3,\ldots\right\}$ denote the set of positive integers. We use $C=C(*,\ldots,*)$ to denote a constant which depends at most on the quantities appearing on the parentheses. In a given context, a constant denoted by $C$ may have different values depending on the same set of arguments and may increase from one inequality to the next. We emphasize that a constant $\eps$ (respectively, $C$) may need to be chosen sufficiently small (respectively, large) by writing $\eps \in (0,1]$ (respectively, $C \in [1,\infty)$).

For notation in functional analysis, we follow Brezis \cite{Brezis} and Rudin \cite{Rudin}. If $\sX, \sY$ is a pair of Banach spaces, then $\sL(\sX,\sY)$ denotes the Banach space of all continuous linear operators from $\sX$ to $\sY$. We denote the continuous dual space of $\sX$ by $\sX^* = \sL(\sX,\RR)$. We write $\alpha(x) = \langle x, \alpha \rangle_{\sX\times\sX^*}$ for the canonical pairing between $\sX$ and its dual space, where $x \in \sX$ and $\alpha \in \sX^*$. If $T \in \sL(\sX, \sY)$, then its range and kernel are denoted by $\Ran T$ and $\Ker T$, respectively.

\subsection{Acknowledgments}
I am very grateful to the National Science Foundation for their support and to the Simons Center for Geometry and Physics, Stony Brook, the Dublin Institute for Advanced Studies, and the Institut des Hautes {\'E}tudes Scientifiques, Bures-sur-Yvette, for their hospitality and support during the preparation of this article. I thank Manousos Maridakis for many helpful conversations regarding {\L}ojasiewicz--Simon gradient inequalities, Yasha Berchenko-Kogan for useful communications regarding Yang--Mills gauge theory, Changyou Wang for useful conversations regarding geometric analysis, and George Daskalopoulos, Richard Wentworth, and Graeme Wilkin for helpful correspondence regarding the Yang--Mills equations over Riemann surfaces. Lastly, I am most grateful to the anonymous referee for a thoughtful review of our article and for alerting me to the fact that Theorem  \ref{mainthm:Lojasiewicz-Simon_inequalities_Yang-Mills_energy_irreducible_Riemann_surface} should hold and suggesting a proof.

\section{Existence of a flat connection for critical exponents}
\label{sec:Existence_flat_connection_critical_exponents_Coulomb_distance}
Our goal in this section is to prove Theorem \ref{mainthm:Uhlenbeck_Chern_corollary_4-3_1_lessthan_p_lessthan_d}, thus extending part of a result \cite[Corollary 4.3]{UhlChern} due to Uhlenbeck, quoted as the forthcoming Theorem \ref{thm:Uhlenbeck_Chern_corollary_4-3}. Specifically, we relax the forthcoming curvature hypothesis \eqref{eq:Curvature_Lq_small}, that is,
\[
\|F_A\|_{L^q(X)} \leq \eps,
\]
when $q>d/2$ to the weaker condition \eqref{eq:mainCurvature_Ldover2_small}, namely,
\[
\|F_A\|_{L^{s_0}(X)} \leq \eps,
\]
where $s_0 = d/2$ when $d\geq 3$ or $s_0 > 1$ when $d=2$.

In Section \ref{subsec:Uhlenbeck_existence_flat_connection_and_apriori_estimate}, we recall the statement of Theorem \ref{thm:Uhlenbeck_Chern_corollary_4-3}, together with remarks on its hypotheses. In Section \ref{subsec:Flat_bundles}, we review the equivalent characterizations of flat bundles. In Section \ref{subsec:Extension_Uhlenbeck_local_Coulomb_gauge_theorem}, we establish an extension of Uhlenbeck's \cite[Theorem 1.3 or Theorem 2.1 and Corollary 2.2]{UhlLp} on existence of a local Coulomb gauge and \apriori $W^{1,p}$ estimate for connections with $L^{d/2}$-small curvature over a ball to include the range $1 < p < d/2$ (when $d\geq 3$). Our principal goal in Section \ref{subsec:Continuous_principal_bundles} is to prove Theorem \ref{thm:Uhlenbeck_Lp_bound_3-2_Sobolev_Ld_small_connection_oneforms}, which yields a continuous isomorphism between principal $G$-bundles that support Sobolev connections whose local connection one-forms in Coulomb gauge are $L^d$-close and whose corresponding transition functions are $L^p$-close for some $p \in (d/2,d)$. In Section \ref{subsec:Uhlenbeck_Chern_corollary_4-3_existence_flat_connection_critical_exponent}
we establish Theorem \ref{thm:Uhlenbeck_Chern_corollary_4-3_critical_existence}, verifying the assertion in Theorem \ref{mainthm:Uhlenbeck_Chern_corollary_4-3_1_lessthan_p_lessthan_d} of existence of a $C^\infty$ flat connection $\Gamma$ on a principal bundle supporting a $W^{1,q}$ connection $A$ with $L^{d/2}$-small curvature $F_A$ for $d \geq 3$ or $L^{s_0}$-small curvature for $d = 2$ and $s_0>1$.

\subsection{Existence of a flat connection for supercritical exponents}
\label{subsec:Uhlenbeck_existence_flat_connection_and_apriori_estimate}
In \cite{UhlChern}, Uhlenbeck establishes the

\begin{thm}[Existence of a flat connection on a principal bundle supporting a $W^{1,q}$ connection with $L^q$-small curvature]
\label{thm:Uhlenbeck_Chern_corollary_4-3}
(See Uhlenbeck \cite[Corollary 4.3]{UhlChern}.)
Let $(X,g)$ be a closed, smooth Riemannian manifold of dimension $d\geq 2$, and $G$ be a compact Lie group, and $q \in (d/2, \infty]$ be a constant. Then there is a constant $\eps=\eps(g,G,q) \in (0,1]$ with the following significance. If $A$ is a $W^{1,q}$ connection on a smooth principal $G$-bundle $P$ over $X$ such that
\begin{equation}
\label{eq:Curvature_Lq_small}
\|F_A\|_{L^q(X)} \leq \eps,
\end{equation}
then there is a $W^{1,q}$ flat connection $\Gamma$ on $P$.
\end{thm}

We give a detailed proof of Theorem \ref{thm:Uhlenbeck_Chern_corollary_4-3}, together with additional estimates, in our \cite[Theorem 1]{Feehan_nonlinear_uhlenbeck_estimate}. Uhlenbeck \cite[Corollary 4.3]{UhlChern} had also asserted that there is a $W^{2,q}$ gauge transformation $u$ of $P$ such that
\begin{equation}
\label{eq:Intro_Uhlenbeck_corollary_4-3_W1q_estimate}
  \|u(A)-\Gamma\|_{W^{1,q}(X)} \leq C\|F_A\|_{L^q(X)},
\end{equation}
where $C=C(d,G,q) \in [1,\infty)$ is a constant and\footnote{This Coulomb gauge condition appears in Uhlenbeck's proof of \cite[Corollary 4.3]{UhlChern}, though not in the statement of her result.} $d_\Gamma^*(u(A)-\Gamma)=0$. In \cite[Theorem 5.1]{Feehan_yangmillsenergygapflat_aim}, we had attempted to supply a detailed proof of \cite[Corollary 4.3]{UhlChern}, which was omitted in \cite{UhlChern}, but our argument was incorrect as we explain in \cite{Feehan_yangmillsenergygapflat_corrigendum}.

While we proved \cite[Theorem 9]{Feehan_nonlinear_uhlenbeck_estimate} that the estimate \eqref{eq:Intro_Uhlenbeck_corollary_4-3_W1q_estimate} holds when the Yang--Mills energy function on the space of Sobolev connections is Morse--Bott along the moduli subspace $M(P)$ of flat connections, it does not hold when the Yang--Mills energy function fails to be Morse--Bott, such as at the product connection in the moduli space of flat $\mathrm{SU}(2)$ connections over a real two-dimensional torus. In \cite[Appendix A]{Feehan_nonlinear_uhlenbeck_estimate}, we describe an example due to Mrowka \cite{Mrowka_7-30-2018} which shows that \eqref{eq:Intro_Uhlenbeck_corollary_4-3_W1q_estimate} cannot hold in general. However, in \cite[Theorem 9]{Feehan_nonlinear_uhlenbeck_estimate}, we prove that a useful modification of Uhlenbeck's estimate,
\begin{equation}
\label{eq:Intro_Uhlenbeck_corollary_4-3_corrected_W1q_estimate}
\|u(A)-\Gamma\|_{W_\Gamma^{1,p}(X)} \leq C\|F_A\|_{L^p(X)}^\lambda, 
\end{equation}
where $p\in(1,q]$ and $C=C(d,G,p) \in [1,\infty)$ and the positive exponent $\lambda=\lambda(g,G,\Gamma) \in (0,1]$ reflects the possibly singular structure of the moduli space $M(P)$ near $[\Gamma]$.

\subsection{Flat bundles}
\label{subsec:Flat_bundles}
We recall the equivalent characterizations of \emph{flat bundles} \cite[Section 1.2]{Kobayashi}, that is, bundles admitting a flat connection. Let $G$ be a Lie group and $P$ be a smooth principal $G$-bundle over a smooth manifold $X$. Let $\{U_\alpha\}_{\alpha\in\sI}$ be an open cover of $X$ with local sections, $\sigma_\alpha: U_\alpha \to P$ and $g_{\alpha\beta}: U_\alpha\cap U_\beta \to G$ be the family of transition functions defined by $\{U_\alpha, \sigma_\alpha\}$. A \emph{flat structure} in $P$ is given by $\{U_\alpha, \sigma_\alpha\}_{\alpha\in\sI}$ such that the $g_{\alpha\beta}$ are all constant maps. A connection in $P$ is said to be \emph{flat} if its curvature vanishes identically.

\begin{prop}[Characterizations of flat principal bundles]
\label{prop:Kobayashi_1-2-6}
(See \cite[Proposition 1.2.6]{Kobayashi}.)
For a smooth principal $G$-bundle $P$ over a smooth manifold $X$, the following conditions are equivalent:
\begin{enumerate}
  \item $P$ admits a flat structure,
  \item $P$ admits a flat connection,
  \item $P$ is defined by a representation\footnote{In the sense of \cite[Section 1.2]{Kobayashi}.} $\pi_1(X) \to G$.
\end{enumerate}
\end{prop}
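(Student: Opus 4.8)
The plan is to establish the cyclic chain of implications $(1)\Rightarrow(2)\Rightarrow(3)\Rightarrow(1)$, using throughout our standing convention that $X$ is connected, so that $\pi_1(X)$ and the universal cover $\tilde X \to X$ are available and $\tilde X$ is connected.

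For $(1)\Rightarrow(2)$, given a flat structure $\{U_\alpha,\sigma_\alpha\}_{\alpha\in\sI}$ on $P$ with constant transition functions $g_{\alpha\beta}$, I would simply set each local connection one-form equal to zero, $A_\alpha := 0 \in \Omega^1(U_\alpha;\fg)$. The compatibility condition for a family $\{A_\alpha\}$ to glue to a global connection on $P$, namely $A_\beta = \Ad_{g_{\alpha\beta}^{-1}}A_\alpha + g_{\alpha\beta}^{-1}\,dg_{\alpha\beta}$ on $U_\alpha\cap U_\beta$, holds trivially because $dg_{\alpha\beta}=0$; and the resulting connection is flat, since its curvature is given locally by $dA_\alpha + \tfrac12[A_\alpha,A_\alpha] = 0$. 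For $(2)\Rightarrow(3)$, let $\nabla$ be a flat connection on $P$, fix a base point $x_0\in X$ and a point $p_0$ in the fiber $P_{x_0}$. The decisive step is that $\nabla$-parallel transport along a piecewise-smooth path in $X$ depends only on the homotopy class of the path rel endpoints; this is exactly where the hypothesis $F_\nabla=0$ enters, via the fact that curvature measures the holonomy around small null-homotopic loops, combined with a homotopy-lifting argument to upgrade this to all null-homotopic loops. It then follows that sending a loop $\gamma$ based at $x_0$ to the unique $\rho(\gamma)\in G$ with $P_\gamma(p_0)=p_0\cdot\rho(\gamma)$, where $P_\gamma$ denotes parallel transport around $\gamma$, is a well-defined group homomorphism $\rho:\pi_1(X,x_0)\to G$. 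One then checks that the map $\tilde X\times_\rho G \to P$ sending $[\tilde x,g]$ to $P_{c}(p_0)\cdot g$, where $c$ is the projection to $X$ of any path in $\tilde X$ from the chosen lift of $x_0$ to $\tilde x$, is well-defined (again using flatness) and is an isomorphism of principal $G$-bundles; here $\tilde X\times_\rho G$ is the flat bundle associated to $\rho$, with $\pi_1(X)$ acting on $\tilde X$ by deck transformations and on $G$ through $\rho$ followed by left translation. Thus $P$ is defined by the representation $\rho$.

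For $(3)\Rightarrow(1)$, suppose $P\cong \tilde X\times_\rho G$ for some representation $\rho:\pi_1(X)\to G$. I would choose an open cover $\{U_\alpha\}$ of $X$ by connected, simply connected open sets, refined so that each nonempty intersection $U_\alpha\cap U_\beta$ is also connected; each $U_\alpha$ is then evenly covered by $\tilde X\to X$. Selecting a single sheet of $\tilde X$ over $U_\alpha$ yields a local section $s_\alpha:U_\alpha\to\tilde X$, hence a local section $\sigma_\alpha$ of $P$. On an overlap, the sheets $s_\alpha$ and $s_\beta$ differ by one deck transformation $\gamma_{\alpha\beta}\in\pi_1(X)$, which is constant on $U_\alpha\cap U_\beta$ by connectedness, so the associated transition function is $g_{\alpha\beta}=\rho(\gamma_{\alpha\beta})$, a constant map. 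Hence $\{U_\alpha,\sigma_\alpha\}$ is a flat structure on $P$.

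The main obstacle is the implication $(2)\Rightarrow(3)$: one must carefully prove that flatness forces $\nabla$-parallel transport to be homotopy-invariant — not merely that it is trivial on a contractible chart — and then that the holonomy representation $\rho$ reconstructs $P$ up to isomorphism; the remaining two implications are essentially bookkeeping once suitable covers and local sections have been chosen. Since all of this is classical, in the paper I would simply cite \cite[Proposition 1.2.6]{Kobayashi} and \cite[Section 1.2]{Kobayashi} for the details, using the outline above only for orientation.
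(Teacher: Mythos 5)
Your proposal is correct and takes essentially the same approach as the paper: the paper quotes the result from \cite[Proposition 1.2.6]{Kobayashi} without proof, only spelling out the $(1)\Rightarrow(2)$ step (zero local connection one-forms with constant transition functions), which agrees exactly with your argument. Your sketch of the remaining implications via holonomy and the $\tilde X\times_\rho G$ model is the standard classical argument that Kobayashi supplies, and your closing remark that one should simply cite Kobayashi matches the paper's actual treatment.
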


Given a flat structure on $P$, we may construct a flat connection $\Gamma$ on $P$ using the zero local connection one-forms $\gamma_\alpha \equiv 0$ on $U_\alpha$, for each $\alpha$ as in \cite[Equation $(1.2.1')$]{Kobayashi}, and observing that the compatibility conditions \cite[Equation (1.1.16)]{Kobayashi},
\[
0 = \gamma_\beta
= g_{\alpha\beta}^{-1}\gamma_\alpha g_{\alpha\beta} + g_{\alpha\beta}^{-1} dg_{\alpha\beta}
= 0 \quad\text{on } U_\alpha\cap U_\beta,
\]
are automatically obeyed.

\subsection{An extension of Uhlenbeck's Theorem on existence of a local Coulomb gauge}
\label{subsec:Extension_Uhlenbeck_local_Coulomb_gauge_theorem}
We shall need to extend Uhlenbeck's Theorem on existence of a local Coulomb gauge to include the range $1 < p < d/2$ when $d \geq 3$ as well as $d/2 \leq p <d$. The required extension is given by Corollary \ref{cor:Uhlenbeck_theorem_1-3_p_lessthan_dover2}. We first recall the original statement of Uhlenbeck's Theorem (with a clarification due to Wehrheim).

\begin{thm}[Existence of a local Coulomb gauge and \apriori estimate for a Sobolev connection with $L^{d/2}$-small curvature]
\label{thm:Uhlenbeck_Lp_1-3}
(See Uhlenbeck \cite[Theorem 1.3 or Theorem 2.1 and Corollary 2.2]{UhlLp} or Wehrheim \cite[Theorem 6.1]{Wehrheim_2004}.)
Let $d\geq 2$, and $G$ be a compact Lie group, and $p \in (1,\infty)$ obeying $d/2 \leq p < d$ and $s_0>1$ be constants. Then there are constants, $\eps=\eps(d,G,p,s_0) \in (0,1]$ and $C=C(d,G,p,s_0) \in [1,\infty)$, with the following significance. For $q \in [p,\infty)$, let $A$ be a $W^{1,q}$ connection on $B\times G$ such that
\begin{equation}
\label{eq:Ldover2_ball_curvature_small}
\|F_A\|_{L^{s_0}(B)} \leq \eps,
\end{equation}
where $B \subset \RR^d$ is the unit ball with center at the origin and $s_0=d/2$ when $d\geq 3$ and $s_0 > 1$ when $d=2$. Then there is a $W^{2,q}$ gauge transformation, $u:B\to G$, such that the following holds. If $A = \Theta + a$, where $\Theta$ is the product connection on $B\times G$, and $u(A) = \Theta + u^{-1}au + u^{-1}du$, then
\begin{align*}
d^*(u(A) - \Theta) &= 0 \quad \text{a.e. on } B,
\\
(u(A) - \Theta)(\vec n) &= 0 \quad \text{on } \partial B,
\end{align*}
where $\vec n$ is the outward-pointing unit normal vector field on $\partial B$, and
\begin{equation}
\label{eq:Uhlenbeck_1-3_W1p_norm_connection_one-form_leq_constant_Lp_norm_curvature}
\|u(A) - \Theta\|_{W^{1,p}(B)} \leq C\|F_A\|_{L^p(B)}.
\end{equation}
\end{thm}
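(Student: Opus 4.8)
The plan is to run Uhlenbeck's continuity argument, assembled from two ingredients: (a) \emph{local} existence and uniqueness of a relative Coulomb gauge for connections sufficiently $W^{1,q}$-close to the product connection $\Theta$, obtained from the implicit function theorem; and (b) an \emph{a priori} estimate upgrading the $L^{s_0}$-smallness of $F_A$ into a $W^{1,p}$ bound on the Coulomb representative, with the quadratic term of the curvature absorbed by means of the borderline Sobolev embedding $W^{1,s_0}(B)\hookrightarrow L^{2s_0}(B)$. Scale invariance of the hypothesis \eqref{eq:Ldover2_ball_curvature_small} will then propagate the smallness along a family of rescaled connections interpolating between $A$ and a connection close to $\Theta$, and a final elliptic bootstrap will raise the regularity of the gauge transformation to $W^{2,q}$.

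For the local problem I would write a gauge transformation near the identity as $u=\exp\xi$ with $\xi\in W^{2,q}(B;\fg)$, so that $u(A)-\Theta = u^{-1}(A-\Theta)u + u^{-1}du$ and the pair of conditions $d^*(u(A)-\Theta)=0$ in $B$, $(u(A)-\Theta)(\vec n)=0$ on $\partial B$, becomes a quasilinear elliptic boundary value problem for $\xi$ with source $A-\Theta$. Its linearization at $(0,0)$ is the $\fg$-valued Neumann Laplacian $\xi\mapsto(d^*d\xi,\ \partial_n\xi|_{\partial B})$, whose kernel and cokernel are the constants (the residual freedom of constant gauge transformations, which one normalizes away). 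The implicit function theorem in $W^{2,q}$ then produces, for every $A-\Theta$ with $\|A-\Theta\|_{W^{1,q}(B)}$ small, a unique small $\xi$ depending continuously on $A-\Theta$; this supplies nonemptiness and openness in the continuity argument.

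The crux is the a priori estimate. If $u(A)=\Theta+\tilde a$ with $d^*\tilde a=0$ in $B$ and $\tilde a(\vec n)=0$ on $\partial B$, then the $L^r$ div--curl (Gaffney-type) estimate for the Hodge--de Rham system under the absolute boundary condition, together with the vanishing of $H^1(B)$, gives $\|\tilde a\|_{W^{1,r}(B)}\le C\|d\tilde a\|_{L^r(B)}$ for $1<r<\infty$. Since $d\tilde a = u^{-1}F_Au - \tilde a\wedge\tilde a$ up to a universal constant and $\|u^{-1}F_Au\|_{L^r}=\|F_A\|_{L^r}$, taking $r=s_0$ and using the embedding $W^{1,s_0}(B)\hookrightarrow L^{2s_0}(B)$ --- valid \emph{precisely} because $s_0\ge d/2$ --- one obtains
\[
\|\tilde a\|_{W^{1,s_0}(B)} \;\le\; C\|F_A\|_{L^{s_0}(B)} + C\|\tilde a\|_{L^{2s_0}(B)}^2 \;\le\; C\|F_A\|_{L^{s_0}(B)} + C\|\tilde a\|_{L^{2s_0}(B)}\|\tilde a\|_{W^{1,s_0}(B)},
\]
so that once $\|\tilde a\|_{L^{2s_0}(B)}$ lies below a fixed threshold the quadratic term is absorbed and $\|\tilde a\|_{W^{1,s_0}(B)}\le 2C\|F_A\|_{L^{s_0}(B)}$; repeating the computation at the exponent $r=p\in[d/2,d)$ then yields \eqref{eq:Uhlenbeck_1-3_W1p_norm_connection_one-form_leq_constant_Lp_norm_curvature} once the smallness is in force. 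The main obstacle lives here: in the borderline case $s_0=d/2$ (that is, $d\ge3$) the embedding $W^{1,s_0}\hookrightarrow L^{2s_0}$ holds with no margin, so one cannot derive the needed $L^{2s_0}$-smallness of $\tilde a$ cheaply but must carry it as an invariant, and the constant $C$ must be tracked through every gauge change.

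To globalize I would exploit the scale invariance of $\|F_A\|_{L^{d/2}}$ (and near-invariance of $\|F_A\|_{L^{s_0}}$ when $d=2$). For $\lambda\in(0,1]$ let $A_\lambda$ be the pullback of $A|_{B_\lambda}$ under $x\mapsto\lambda x$; then $\|F_{A_\lambda}\|_{L^{s_0}(B)}=\|F_A\|_{L^{s_0}(B_\lambda)}\le\eps$ for all $\lambda$, while $\|A_\lambda-\Theta\|_{L^d(B)}=\|A-\Theta\|_{L^d(B_\lambda)}\to0$ as $\lambda\to0$ since $A-\Theta\in W^{1,q}\hookrightarrow L^d$. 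Let $S\subset(0,1]$ consist of those $\lambda$ for which $A_\lambda$ admits a relative Coulomb gauge $u_\lambda$ with $\|u_\lambda(A_\lambda)-\Theta\|_{W^{1,s_0}(B)}\le M$, $M$ the threshold from the a priori estimate. Then $S$ contains a punctured neighborhood of $0$ (local existence plus smallness of $\|A_\lambda-\Theta\|_{L^d}$), $S$ is open (local existence applied near each $A_\lambda$), and $S$ is closed because the a priori estimate forces the \emph{strict} bound $\|u_\lambda(A_\lambda)-\Theta\|_{W^{1,s_0}(B)}\le 2C\eps<M$ once $\eps$ is chosen small relative to $M$, while the limiting gauge transformation is extracted from the $W^{2,q}$ bounds and Rellich compactness. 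Hence $S=(0,1]$, so $1\in S$. Finally, the $W^{2,q}$-regularity of $u$ and $L^q$-regularity of $\tilde a$ follow by iterating the div--curl estimate and the equation $d^*(u^{-1}du)=-d^*(u^{-1}(A-\Theta)u)$ through the Sobolev scale, absorbing the quadratic terms at each stage using the established smallness of $\tilde a$; pinning down the interplay among the boundary condition $\tilde a(\vec n)=0$, the Sobolev completion of the gauge group, and the validity of these estimates up to $\partial B$ is precisely the issue addressed by Wehrheim \cite{Wehrheim_2004}. Everything else --- the implicit function theorem step and the $L^r$ elliptic boundary estimates --- is routine.
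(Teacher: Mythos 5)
The paper does not prove this theorem; it is quoted verbatim from Uhlenbeck \cite[Theorem 1.3 or Theorem 2.1 and Corollary 2.2]{UhlLp} (with a clarification on constant dependencies and the $d=2$ case from Wehrheim \cite[Theorem 6.1]{Wehrheim_2004}), so there is no in-paper proof to compare yours against. Your reconstruction has the correct skeleton of the Uhlenbeck--Wehrheim argument and would go through: implicit function theorem for the local Coulomb slice near $\Theta$, Gaffney (div--curl) estimate with vanishing first absolute cohomology of the ball, absorption of the $\tilde a\wedge\tilde a$ term via the critical embedding $W^{1,d/2}\hookrightarrow L^d$, a continuity method to go from small to order-one data, and a final elliptic bootstrap to $W^{2,q}$. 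One genuine difference from Uhlenbeck's own proof: she runs the method of continuity along the \emph{affine} path $A_t=\Theta+t(A-\Theta)$ (after a preliminary reduction making $\|F_{A_t}\|_{L^{d/2}}$ uniformly small along the path), whereas you run it along the \emph{rescaling} family $A_\lambda$. Your variant is legitimate and buys you something --- scale-invariance of $\|F_{\cdot}\|_{L^{d/2}}$ automatically makes the smallness hypothesis hereditary along the whole path, which under linear interpolation requires a separate argument to control $\|F_{A_t}\|_{L^{d/2}}$ in terms of $\|F_A\|_{L^{d/2}}$ and $\|A-\Theta\|_{L^d}$ --- at the cost of needing $\|A_\lambda-\Theta\|_{W^{1,q}(B)}\to 0$ as $\lambda\to 0$, which you should verify explicitly (it does hold for $q>d/2$ since the $W^{1,q}$ norm of a 1-form scales like $\lambda^{1-d/q}$ on the gradient part and is dominated on small balls on the zeroth-order part).

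Two points to flag, both of which you correctly identify as the Wehrheim-level technical content but which should not be understated. First, the closedness step: the a priori estimate gives a uniform $W^{1,s_0}$ bound on the \emph{gauged} connection one-forms, but passing to a limit requires uniform $W^{2,r}$ control on the gauge transformations $u_\lambda$ themselves, and this already forces you into the bootstrap before you can extract the limit; the right order of operations is to bootstrap $\tilde a_\lambda$ to $W^{1,q}$ (absorbing $\tilde a\wedge\tilde a$ at each rung using the established $L^d$-smallness), infer uniform $W^{2,q}$ bounds on $u_\lambda$ from $du_\lambda=u_\lambda\tilde a_\lambda-(A_\lambda-\Theta)u_\lambda$ and the Coulomb gauge equation, and only then invoke Rellich. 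Second, the implicit function theorem step silently assumes $q>d/2$ so that $W^{2,q}(B;G)$ is a Banach Lie group acting smoothly on $W^{1,q}$ connections; the theorem as stated allows $q=p=d/2$, which is a borderline case in which the gauge group is not a Banach manifold, and one must either adopt Uhlenbeck's original approximation-by-smooth argument or Wehrheim's careful treatment of the boundary value problem. Neither of these is a wrong turn in your outline, but both are places where the proof actually lives rather than places where it can be waved through as routine.
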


\begin{rmk}[Restriction of $p$ to the range $1<p<\infty$]
The restriction $p\in(1,\infty)$ should be included in the statements of \cite[Theorem 1.3 or Theorem 2.1 and Corollary 2.2]{UhlLp} since the bound \eqref{eq:Uhlenbeck_1-3_W1p_norm_connection_one-form_leq_constant_Lp_norm_curvature} ultimately follows from an \apriori $L^p$ estimate for an elliptic system that is apparently only valid when $1<p<\infty$. Wehrheim makes a similar observation in her \cite[Remark 6.2 (d)]{Wehrheim_2004}. This is also the reason that when $d=2$, we require $s_0>1$ in \eqref{eq:Ldover2_ball_curvature_small}.
\end{rmk}

\begin{rmk}[Dependencies of the constants in Theorem \ref{thm:Uhlenbeck_Lp_1-3}]
\label{rmk:Uhlenbeck_Lp_1-3_constant_dependencies}
(See \cite[Remark 4.2]{Feehan_yangmillsenergygapflat_aim}.)
The statements of \cite[Theorem 1.3 or Theorem 2.1 and Corollary 2.2]{UhlLp} imply that the constants, $\eps$ in \eqref{eq:Ldover2_ball_curvature_small} and $C$ in \eqref{eq:Uhlenbeck_1-3_W1p_norm_connection_one-form_leq_constant_Lp_norm_curvature}, only depend the dimension, $d$. However, their proofs suggest that these constants may also depend on $G$ and $p$ through the appeal to an elliptic estimate for $d+d^*$ in the verification of \cite[Lemma 2.4]{UhlLp} and arguments immediately following.
\end{rmk}

\begin{rmk}[Construction of a $W^{k+1,q}$ transformation to Coulomb gauge]
\label{rmk:Uhlenbeck_theorem_1-3_Wkp}
(See \cite[Remark 4.3]{Feehan_yangmillsenergygapflat_aim}.)
We note that if $A$ is of class $W^{k,q}$, for an integer $k \geq 1$ and $q \geq 2$, then the gauge transformation, $u$, in Theorem \ref{thm:Uhlenbeck_Lp_1-3} is of class $W^{k+1,q}$; see \cite[page 32]{UhlLp}, the proof of \cite[Lemma 2.7]{UhlLp} via the Implicit Function Theorem for smooth functions on Banach spaces, and our proof of \cite[Theorem 1.1]{FeehanSlice} --- a global version of Theorem \ref{thm:Uhlenbeck_Lp_1-3}.
\end{rmk}

\begin{rmk}[Non-flat Riemannian metrics]
\label{rmk:Non-flat_Riemannian_metrics}
Theorem \ref{thm:Uhlenbeck_Lp_1-3} continues to hold for geodesic unit balls in a manifold $X$ endowed a non-flat Riemannian metric, $g$. The only difference in this more general situation is that the constants $C$ and $\eps$ will depend on bounds on the Riemann curvature tensor $\Riem$. See Wehrheim \cite[Theorem 6.1]{Wehrheim_2004}.
\end{rmk}

We now provide an extension of Theorem \ref{thm:Uhlenbeck_Lp_1-3} to include the range $1 < p < d/2$ (and in particular, $p=2$, when $d \geq 5$).

\begin{cor}[Existence of a local Coulomb gauge and \apriori $W^{1,p}$ estimate for a Sobolev connection with $L^{d/2}$-small curvature when $p < d/2$]
\label{cor:Uhlenbeck_theorem_1-3_p_lessthan_dover2}
Assume the hypotheses of Theorem \ref{thm:Uhlenbeck_Lp_1-3}, but allow any $p \in (1,\infty)$ obeying $p < d/2$ when $d \geq 3$. Then the estimate \eqref{eq:Uhlenbeck_1-3_W1p_norm_connection_one-form_leq_constant_Lp_norm_curvature} holds for $1 < p < d/2$.
\end{cor}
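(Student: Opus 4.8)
The plan is to keep the gauge transformation $u$ that Theorem \ref{thm:Uhlenbeck_Lp_1-3} already produces and merely extend the \apriori estimate \eqref{eq:Uhlenbeck_1-3_W1p_norm_connection_one-form_leq_constant_Lp_norm_curvature} downward from $p \geq d/2$ to $1 < p < d/2$ by a bootstrap-and-absorb argument. First I would apply Theorem \ref{thm:Uhlenbeck_Lp_1-3} with the borderline exponent $d/2$ in place of $p$ --- legitimate since $1 < d/2 < d$ when $d \geq 3$ and $q \geq d/2$ --- to obtain a $W^{2,q}$ gauge transformation $u : B \to G$ such that $a := u(A) - \Theta$ satisfies $d^* a = 0$ in $B$, $a(\vec n) = 0$ on $\partial B$, and
\[
\|a\|_{W^{1,d/2}(B)} \leq C\|F_A\|_{L^{d/2}(B)} \leq C\eps .
\]
Two consequences I would extract immediately: by the critical Sobolev embedding $W^{1,d/2}(B) \hookrightarrow L^d(B)$ one has $\|a\|_{L^d(B)} \leq C\eps$; and since $B$ is bounded and $p < d/2$, one has $a \in W^{1,p}(B)$, i.e.\ $\|a\|_{W^{1,p}(B)}$ is finite --- this last point is exactly what makes the absorption step below legitimate.

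The core of the argument is the \apriori estimate for the first-order elliptic boundary value problem $a \mapsto (da, d^*a)$ subject to the absolute (Neumann-type) boundary condition $a(\vec n) = 0$ on $\partial B$: for every $p \in (1,\infty)$ there is a constant $C = C(d,p)$ with
\[
\|a\|_{W^{1,p}(B)} \leq C\|da\|_{L^p(B)}
\]
for all $\fg$-valued one-forms $a \in W^{1,p}(B)$ with $d^* a = 0$ in $B$ and $a(\vec n) = 0$ on $\partial B$; the lower-order term drops out because $H^1_{\mathrm{dR}}(\bar B) = 0$, so there are no nonzero $L^p$ harmonic fields satisfying this boundary condition, and this is precisely the mechanism already underlying Uhlenbeck's estimate, which is why it is confined to $1 < p < \infty$. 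Feeding in the gauge-transformed curvature identity $da = F_{u(A)} - a \wedge a$ together with the pointwise isometry $|F_{u(A)}| = |F_A|$ (Ad-invariance of the metric on $\fg$), I would obtain
\[
\|a\|_{W^{1,p}(B)} \leq C\|F_A\|_{L^p(B)} + C\|a \wedge a\|_{L^p(B)} .
\]

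It then remains to bound the quadratic term by $C\eps\|a\|_{W^{1,p}(B)}$ so it can be absorbed. Here $|a \wedge a| \leq c_G|a|^2$ pointwise, and I would use H\"older's inequality with $\tfrac1p = \tfrac1d + \tfrac{1}{p^*}$, where $p^* = \tfrac{dp}{d-p}$, followed by the Sobolev embedding $W^{1,p}(B) \hookrightarrow L^{p^*}(B)$ (valid since $p < d$), to get
\[
\|a \wedge a\|_{L^p(B)} \leq c_G\|a\|_{L^d(B)}\|a\|_{L^{p^*}(B)} \leq C\|a\|_{L^d(B)}\|a\|_{W^{1,p}(B)} \leq C\eps\|a\|_{W^{1,p}(B)} .
\]
Shrinking $\eps = \eps(d,G,p,s_0)$ so that $C\eps \leq \tfrac12$ and using $\|a\|_{W^{1,p}(B)} < \infty$, the quadratic term is absorbed into the left-hand side, leaving $\|a\|_{W^{1,p}(B)} \leq 2C\|F_A\|_{L^p(B)}$, which is \eqref{eq:Uhlenbeck_1-3_W1p_norm_connection_one-form_leq_constant_Lp_norm_curvature} for $1 < p < d/2$.

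The hard part --- and the reason this does not follow by a one-line rerun of the $p \geq d/2$ case --- is handling $a \wedge a$ in the subcritical regime: when $p < d/2$ one has $2p > p^*$, so $W^{1,p}(B)$ does not embed into $L^{2p}(B)$ and the naive bound $\|a \wedge a\|_{L^p(B)} \leq C\|a\|_{L^{2p}(B)}^2 \leq C\|a\|_{W^{1,p}(B)}^2$ is simply false. The resolution is to split the two factors of $a$ unevenly, spending the uniform smallness $\|a\|_{L^d(B)} \leq C\eps$ --- which comes from the borderline $W^{1,d/2}$ control, itself the real content of Theorem \ref{thm:Uhlenbeck_Lp_1-3} under the hypothesis $\|F_A\|_{L^{d/2}(B)} \leq \eps$ --- on one factor and only the ordinary Sobolev embedding into $L^{p^*}$ on the other. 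A secondary point I would check carefully is the finiteness of $\|a\|_{W^{1,p}(B)}$ before absorbing, which is immediate from $a \in W^{1,d/2}(B)$ and $p < d/2$ on the bounded domain $B$.
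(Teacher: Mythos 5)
Your argument is correct and takes essentially the same route as the paper: apply Theorem \ref{thm:Uhlenbeck_Lp_1-3} at the borderline exponent $d/2$ to obtain the Coulomb gauge and the $L^d$-smallness of $a$, invoke the lower-order-free $L^p$ elliptic estimate for $(d,d^*)$ with the Neumann boundary condition, split $a\wedge a$ unevenly via H\"older with exponents $(d,p^*)$ and the embedding $W^{1,p}\hookrightarrow L^{p^*}$, and absorb. Your explicit observation that $\|a\|_{W^{1,p}(B)}$ is finite a priori (since $p<d/2$ and $B$ is bounded) is a small clarification the paper leaves implicit; the paper's proof otherwise coincides with yours, apart from an apparent citation slip where it refers to Theorem \ref{thm:Uhlenbeck_Chern_corollary_4-3} for the $W^{1,d/2}$ bound on the ball rather than Theorem \ref{thm:Uhlenbeck_Lp_1-3}, which is the reference you correctly use.
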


\begin{proof}
The proof of Theorem \ref{thm:Uhlenbeck_Lp_1-3} by Uhlenbeck in \cite[Section 2]{UhlLp} makes use of the hypothesis $d/2\leq p < d$ through her appeal to a H\"older inequality and a Sobolev embedding. However, an alternative H\"older inequality and Sobolev embedding apply for the case $1 < p < d/2$, as we now explain. Write $a := u(A)-\Theta \in W^{1,q}(B;T^*X\otimes\fg)$ for brevity and observe that by ellipticity of the first-order operator $d+d^*:\Omega^1(B;\fg) \to \Omega^2(B;\fg)\oplus \Omega^0(B;\fg)$ with its Neumann boundary condition, we have the \apriori global estimate (see \cite[Theorem 5.1 and p. 102, last paragraph]{Wehrheim_2004}),
\begin{equation}
\label{eq:Wehrheim_theorem_5-1-ii}
\|a\|_{W^{1,p}(B)} \leq C\|(d+d^*)a\|_{L^p(B)},
\end{equation}
for $C = C(d,G,p) \in [1,\infty)$. Using $d^*a = 0$ and $F_{u(A)} = F_{\Theta + a} = F_\Theta + da + a\wedge a = da + a\wedge a$ and $|F_{u(A)}| = |F_A|$ a.e. on $B$, the preceding bound yields
\[
\|a\|_{W^{1,p}(B)} \leq C\left( \|F_A\|_{L^p(B)} + \|a\wedge a\|_{L^p(B)} \right).
\]
We can estimate $\|a\wedge a\|_{L^p(B)}$ by writing, for a constant $c = c(d,G) \in [1,\infty)$,
\[
\|a\wedge a\|_{L^p(B)} \leq c\|a\|_{L^s(B)}\|a\|_{L^d(B)},
\]
where $s>p$ is defined by $1/p = 1/s + 1/d$, that is, $1/s = (d-p)/dp$ or $s = dp/(d-p)$. Recall from \cite[Theorem 4.12, Part I (C)]{AdamsFournier} that there is a continuous embedding of Sobolev spaces, $W^{1,p}(B;\RR) \subset L^{p^*}(B;\RR)$, when $1\leq p < d$ (by hypothesis, we have $1 < p < d/2$) and $p^* = dp/(d-p) = s$. Hence, noting that $(d/2)^* = d(d/2)/(d-(d/2)) = d$, we obtain\footnote{Throughout this article, we apply the pointwise Kato Inequality \cite[Equation (6.20)]{FU} to pass from a Sobolev inequality for scalar functions to a Sobolev inequality with the same constant for sections of a vector bundle.}
\begin{align*}
\|a\|_{L^d(B)} &\leq C\|a\|_{W^{1,d/2}(B)},
\\
\|a\|_{L^{p^*}(B)} &\leq C\|a\|_{W^{1,p}(B)},
\end{align*}
for $C=C(d)$ or $C=C(d,p) \in [1,\infty)$, respectively. Therefore,
\[
\|a\wedge a\|_{L^p(B)}
\leq
c\|a\|_{L^{p^*}(B)}\|a\|_{L^d(B)}
\leq
C\|a\|_{W^{1,p}(B)}\|a\|_{W^{1,d/2}(B)},
\]
for $C=C(d,G,p) \in [1,\infty)$. The estimate \eqref{eq:Uhlenbeck_1-3_W1p_norm_connection_one-form_leq_constant_Lp_norm_curvature} (with $p=d/2$) from Theorem \ref{thm:Uhlenbeck_Lp_1-3} yields
\[
\|a\|_{W^{1,d/2}(B)} \leq C\|F_A\|_{L^{d/2}(B)},
\]
for $C = C(d,G) \in [1, \infty)$. But $\|F_A\|_{L^{d/2}(B)} \leq \eps$ by hypothesis \eqref{eq:Ldover2_ball_curvature_small} of Theorem \ref{thm:Uhlenbeck_Lp_1-3}, so we may combine the preceding inequalities to give, for $C=C(d,G,p) \in [1,\infty)$,
\[
\|a\wedge a\|_{L^p(B)} \leq C\eps\|a\|_{W^{1,p}(B)}.
\]
Consequently,
\begin{align*}
\|a\|_{W^{1,p}(B)} &\leq C\left(\|F_A\|_{L^p(B)} + \|a\wedge a\|_{L^p(B)}\right)
\\
&\leq C\left(\|F_A\|_{L^p(B)} + \eps\|a\|_{W^{1,p}(B)}\right).
\end{align*}
Hence, for small enough $\eps = \eps(d,G,p) \in (0,1]$, we may use rearrangement to find
\[
\|a\|_{W^{1,p}(B)} \leq C\|F_A\|_{L^p(B)},
\]
and this yields \eqref{eq:Uhlenbeck_1-3_W1p_norm_connection_one-form_leq_constant_Lp_norm_curvature} when $p \in (1,d/2)$.
\end{proof}

For completeness, we shall also include the following extension of Theorem \ref{thm:Uhlenbeck_Lp_1-3} (and slight improvement of our \cite[Corollary 4.4]{Feehan_yangmillsenergygapflat_aim}) to include the range $d \leq p < \infty$, although this extension will not be needed in this article.

\begin{cor}[Existence of a local Coulomb gauge and \apriori $W^{1,p}$ estimate for a Sobolev connection one-form with $L^{\bar p}$-small curvature when $p \geq d$]
\label{cor:Uhlenbeck_theorem_1-3_p_geq_d}
Assume the hypotheses of Theorem \ref{thm:Uhlenbeck_Lp_1-3}, but consider $d \leq p < \infty$ and strengthen \eqref{eq:Ldover2_ball_curvature_small} to\footnote{In \cite[Corollary 4.4]{Feehan_yangmillsenergygapflat_aim}, we assumed the still stronger condition, $\|F_A\|_{L^p(B)} \leq \eps$. }
\begin{equation}
\label{eq:Lbarp_ball_curvature_small}
\|F_A\|_{L^{\bar p}(B)} \leq \eps,
\end{equation}
where $\bar p = dp/(d+p)$ when $p>d$ and $\bar p > d/2$ when $p=d$. Then the estimate \eqref{eq:Uhlenbeck_1-3_W1p_norm_connection_one-form_leq_constant_Lp_norm_curvature} holds for $d \leq p < \infty$ and constant $C = C(d,p,\bar p,G) \in [1,\infty)$.
\end{cor}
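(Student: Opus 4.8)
The plan is to follow the same strategy as in the proof of Corollary \ref{cor:Uhlenbeck_theorem_1-3_p_lessthan_dover2}: produce a Coulomb gauge transformation from Theorem \ref{thm:Uhlenbeck_Lp_1-3} at a subcritical exponent, then upgrade the bound at the desired exponent $p$ via the a priori elliptic estimate \eqref{eq:Wehrheim_theorem_5-1-ii} for $d+d^*$ with its Neumann boundary condition, absorbing the quadratic term $\|a\wedge a\|_{L^p(B)}$ by a H\"older inequality and a Sobolev (or Morrey) embedding. Recall that $\bar p = dp/(d+p)$ for $p>d$, so that $1/\bar p = 1/p + 1/d$ and the Sobolev conjugate is $(\bar p)^* = d\bar p/(d-\bar p) = p$; for $p=d$ we have, by hypothesis, $d/2 < \bar p < d$ and hence $(\bar p)^* = d\bar p/(d-\bar p) > d$. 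In either case $\bar p \in (d/2, d)$, so \eqref{eq:Lbarp_ball_curvature_small} gives $\|F_A\|_{L^{d/2}(B)} \leq \eps$ on the unit ball $B$ after absorbing a dimensional constant into $\eps$ (or directly $\|F_A\|_{L^{\bar p}(B)} \leq \eps$ with $\bar p>1$ when $d=2$), and Theorem \ref{thm:Uhlenbeck_Lp_1-3} applies with $\bar p$ in place of $p$: there is a $W^{2,q}$ gauge transformation $u:B\to G$ such that $a := u(A)-\Theta \in W^{1,q}(B;\Lambda^1\otimes\fg)$ obeys $d^*a = 0$ a.e. on $B$, the Neumann condition $a(\vec n) = 0$ on $\partial B$, and $\|a\|_{W^{1,\bar p}(B)} \leq C\|F_A\|_{L^{\bar p}(B)} \leq C\eps$.

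Next I would bootstrap at the exponent $p$. Since $q \geq p$, the quantity $\|a\|_{W^{1,p}(B)}$ is a priori finite, and the elliptic estimate \eqref{eq:Wehrheim_theorem_5-1-ii}, together with $d^*a = 0$, the identity $da = F_{u(A)} - a\wedge a$, and $|F_{u(A)}| = |F_A|$ a.e. on $B$, gives
\[
\|a\|_{W^{1,p}(B)} \leq C\left(\|F_A\|_{L^p(B)} + \|a\wedge a\|_{L^p(B)}\right).
\]
To control the quadratic term, write $\|a\wedge a\|_{L^p(B)} \leq c\|a\|_{L^{r_1}(B)}\|a\|_{L^{r_2}(B)}$ with $1/p = 1/r_1 + 1/r_2$ and $r_2 = (\bar p)^*$; when $p>d$ this forces $r_1 = \infty$, and Morrey's embedding $W^{1,p}(B) \subset C^0(B) \subset L^\infty(B)$ applies, while when $p=d$ the strict inequality $(\bar p)^* > d = p$ makes $r_1$ finite and the Sobolev embedding $W^{1,d}(B) \subset L^{r_1}(B)$ applies. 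In both cases, combining with $W^{1,\bar p}(B) \subset L^{r_2}(B)$ (and the Kato inequality, as elsewhere in this article) yields
\[
\|a\wedge a\|_{L^p(B)} \leq C\|a\|_{W^{1,p}(B)}\|a\|_{W^{1,\bar p}(B)} \leq C\eps\|a\|_{W^{1,p}(B)},
\]
for $C = C(d,p,\bar p,G) \in [1,\infty)$.

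Combining the last two displays gives $\|a\|_{W^{1,p}(B)} \leq C(\|F_A\|_{L^p(B)} + \eps\|a\|_{W^{1,p}(B)})$, so for $\eps = \eps(d,p,\bar p,G) \in (0,1]$ small enough, rearrangement produces $\|u(A)-\Theta\|_{W^{1,p}(B)} \leq C\|F_A\|_{L^p(B)}$ with $C = C(d,p,\bar p,G) \in [1,\infty)$, which is the claimed estimate \eqref{eq:Uhlenbeck_1-3_W1p_norm_connection_one-form_leq_constant_Lp_norm_curvature} for $d \leq p < \infty$. The genuinely delicate point, and the reason the curvature hypothesis must be strengthened from $L^{d/2}$ to $L^{\bar p}$, is that $L^{d/2}$-smallness of $F_A$ only yields $L^d$-control of $a$, which is insufficient in this range: at $p=d$ the splitting used when $p>d$ (one factor in $L^\infty$) is unavailable because $W^{1,d}(B)$ fails to embed into $L^\infty(B)$, while for $p>d$ one would need $1/p-1/d\geq 0$, which is false. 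The extra integrability $\bar p > d/2$ of the curvature is exactly what upgrades the Coulomb-gauge bound on $a$ to $L^{r_2}(B)$-control with $r_2 = (\bar p)^* > d$ (indeed $=p$ when $\bar p = dp/(d+p)$), enough room to close the H\"older estimate with the remaining factor in a finite $L^{r_1}(B)$.
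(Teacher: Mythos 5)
Your proposal is correct and follows essentially the same route as the paper: apply Theorem \ref{thm:Uhlenbeck_Lp_1-3} at the subcritical exponent $\bar p \in (d/2,d)$ to obtain the Coulomb gauge and the $W^{1,\bar p}$ smallness of $a$, then estimate $\|a\wedge a\|_{L^p(B)}$ by a H\"older split into $L^{(\bar p)^*}\times L^\infty$ (for $p>d$, using Morrey) or $L^{(\bar p)^*}\times L^{r_1}$ with $r_1<\infty$ (for $p=d$, using the borderline embedding of $W^{1,d}$), and absorb by rearrangement. Your choices $r_2=(\bar p)^*$, $1/p=1/r_1+1/r_2$ reproduce exactly the paper's pairings ($L^p\times L^\infty$ for $p>d$, $L^s\times L^t$ with $s=(\bar s)^*$ for $p=d$).
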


\begin{proof}
We modify the proof of Corollary \ref{cor:Uhlenbeck_theorem_1-3_p_lessthan_dover2} and separately consider the cases $d<p<\infty$ and $p=d$. When $p > d$, then \cite[Theorem 4.12, Part I (A)]{AdamsFournier} provides a continuous embedding of Sobolev spaces, $W^{1,p}(B;\RR) \subset L^\infty(B;\RR)$. Also,
\cite[Theorem 4.12, Part I (C)]{AdamsFournier} provides a continuous embedding of Sobolev spaces, $W^{1,\bar p}(B;\RR) \subset L^p(B;\RR)$ when $p={\bar p}^*:=d\bar p/(d-\bar p) \in (d,\infty)$, that is, $\bar p = dp/(d+p) \in (d/2,d)$. Thus,
\[
\|a\wedge a\|_{L^p(B)} \leq c\|a\|_{L^p(B)}\|a\|_{L^\infty(B)} \leq C\|a\|_{W^{1,\bar p}(B)}\|a\|_{W^{1,p}(B)},
\]
for $c = c(d,G) \in [1,\infty)$ and $C=C(d,G,p) \in [1,\infty)$. Because $\bar p \in (d/2,d)$, Theorem \ref{thm:Uhlenbeck_Lp_1-3} applies to give
\[
\|a\|_{W^{1,\bar p}(B)} \leq C\|F_A\|_{L^{\bar p}(B)}.
\]
Thus, for $F_A$ obeying \eqref{eq:Lbarp_ball_curvature_small} with $\bar p = dp/(d+p)$, the proof of Corollary \ref{cor:Uhlenbeck_theorem_1-3_p_lessthan_dover2} yields estimate \eqref{eq:Uhlenbeck_1-3_W1p_norm_connection_one-form_leq_constant_Lp_norm_curvature}.

When $p=d$, choose $s \in (d,\infty)$ and define $t \in (d,\infty)$ by $1/d = 1/s+1/t$, so that
\[
\|a\wedge a\|_{L^p(B)} \leq c\|a\|_{L^s(B)}\|a\|_{L^t(B)}.
\]
For $\bar s = ds/(d+s) \in (d/2,d)$, we have a continuous embedding of Sobolev spaces, $W^{1,\bar s}(B;\RR) \subset L^s(B;\RR)$. Also, \cite[Theorem 4.12, Part I (B)]{AdamsFournier} provides a continuous embedding of Sobolev spaces, $W^{1,d}(B;\RR) \subset L^t(B;\RR)$. Therefore, applying these embeddings to the preceding inequality yields
\[
\|a\wedge a\|_{L^p(B)} \leq C\|a\|_{W^{1,\bar s}(B)}\|a\|_{W^{1,d}(B)},
\]
for $C=C(d,G,p,s) \in [1,\infty)$. Because $\bar s \in (d/2,d)$, Theorem \ref{thm:Uhlenbeck_Lp_1-3} again applies to give
\[
\|a\|_{W^{1,\bar s}(B)} \leq C\|F_A\|_{L^{\bar s}(B)}.
\]
Thus, for $F_A$ obeying \eqref{eq:Lbarp_ball_curvature_small} with $\bar p = \bar s$, the proof of Corollary \ref{cor:Uhlenbeck_theorem_1-3_p_lessthan_dover2} again yields estimate \eqref{eq:Uhlenbeck_1-3_W1p_norm_connection_one-form_leq_constant_Lp_norm_curvature}.
\end{proof}

\subsection{Continuous principal bundles}
\label{subsec:Continuous_principal_bundles}
Our principal goal in this subsection is to prove the forthcoming Theorem \ref{thm:Uhlenbeck_Lp_bound_3-2_Sobolev_Ld_small_connection_oneforms}, which yields a continuous isomorphism between principal $G$-bundles that support Sobolev connections whose local connection one-forms in Coulomb gauge are $L^d$-small and whose corresponding transition functions are $L^p$-close for some $p \in (d/2,d)$.

Recall \cite[Theorem 5.3.2]{Husemoller} that a continuous principal $G$-bundle $P_g$ over $X$ is uniquely defined up to isomorphism by a collection of maps, $g_{\alpha\beta}:U_\alpha \cap U_\beta \to G$, corresponding to a covering $\sU = \{U_\alpha\}_{\alpha\in\sI})$ of $X$ by open subsets, that obeys the cocycle condition,
\begin{equation}
\label{eq:Cocycle_condition_transition_functions}
g_{\alpha\beta}g_{\beta\gamma}g_{\gamma\alpha} = \id_G \quad\text{on } U_\alpha \cap U_\beta \cap U_\gamma,
\end{equation}
for all $\alpha, \beta, \gamma \in \sI$ such that $U_\alpha \cap U_\beta \cap U_\gamma \neq \emptyset$. The condition \eqref{eq:Cocycle_condition_transition_functions} implies that $g_{\alpha\alpha}=\id_G$ on $U_\alpha$ and $g_{\alpha\beta}^{-1} = g_{\beta\alpha}$ on $U_\alpha\cap U_\beta$.
Moreover, according to \cite[Proposition 5.2.5]{Husemoller}, a bundle $P_g$ is isomorphic to $P_h = (\{h_{\alpha\beta}\}_{\alpha,\beta\in\sI}, \{U_\alpha\}_{\alpha\in\sI})$ if and only if there exist continuous maps, $\rho_\alpha:U_\alpha \to G$ for all $\alpha\in\sI$, such that
\begin{equation}
\label{eq:Equivalent_transition_functions}
h_{\alpha\beta} = \rho_\alpha^{-1}g_{\alpha\beta}\rho_\beta \quad\text{on } U_\alpha\cap U_\beta,
\end{equation}
for all $\alpha, \beta \in \sI$ such that $U_\alpha \cap U_\beta \neq \emptyset$. The result below due to Uhlenbeck provides a useful criterion for existence of the collection of maps $\{\rho_\alpha\}_{\alpha\in\sI}$.

\begin{prop}[Isomorphisms of principal bundles with sufficiently close transition functions]
\label{prop:Uhlenbeck_1982Lp_3-2}
(See Uhlenbeck \cite[Proposition 3.2]{UhlLp}, Wehrheim \cite[Lemma 7.2 (i)]{Wehrheim_2004}.)
Let $G$ be a compact Lie group and $X$ be a compact manifold of dimension $d\geq 2$ endowed with a Riemannian metric $g$. Let $\{g_{\alpha\beta}\}$ and $\{h_{\alpha\beta}\}$ be two sets of continuous transition functions with respect to a finite open cover $\sU = \{U_\alpha\}_{\alpha\in\sI}$ of $X$. Then there exist constants, $\eps = \eps(g, G, \sU) \in (0,1]$ and $C = C(g, G, \sU) \in [1,\infty)$, with the following significance. If
%TODO Relabel this \epsilon to eliminate confusion with \epsilon in main theorem
\begin{equation}
\label{eq:Uhlenbeck_1982Lp_proposition_3-2_C0_small_g_alphabeta-h_alphabeta}
\delta := \sup_{\begin{subarray}{c} x\in U_\alpha\cap U_\beta, \\ \alpha, \beta \in \sI \end{subarray}}
\left|g_{\alpha\beta}(x) - h_{\alpha\beta}(x)\right| \leq \eps,
\end{equation}
then there exists a finite open cover $\sV = \{V_\alpha\}_{\alpha\in\sI}$ of $X$, with $V_\alpha \subset U_\alpha$, and a set of continuous maps $\rho_\alpha : V_\alpha \to G$ such that
\[
\rho_\alpha g_{\alpha\beta} \rho_\beta^{-1} = h_{\alpha\beta} \quad\text{on }V_\alpha \cap V_\beta
\]
and
\begin{equation}
\label{eq:Uhlenbeck_1982Lp_proposition_3-2_C0_bound_rho_alpha-id}
\sup_{\begin{subarray}{c} x \in V_\alpha, \\ \alpha \in \sI \end{subarray}}
\left|\rho_\alpha(x) - \id_G\right| \leq C\delta.
\end{equation}
In particular, the principal $G$-bundle defined by $\{g_{\alpha\beta}\}$ is isomorphic to the principal $G$-bundle defined by $\{h_{\alpha\beta}\}$.
\end{prop}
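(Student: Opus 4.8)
The plan is to prove Proposition~\ref{prop:Uhlenbeck_1982Lp_3-2} by an explicit inductive construction of the maps $\rho_\alpha$ over the finite cover, adjoining one open set at a time and then extending the newly produced matching map, using the exponential chart of $G$ near $\id_G$ to keep everything $C^0$-close to the identity. Enumerate $\sI = \{1,\dots,N\}$ and fix a chain of strict shrinkings $V_\alpha = V_\alpha^{(N)} \Subset V_\alpha^{(N-1)} \Subset \cdots \Subset V_\alpha^{(0)} = U_\alpha$ with each $\{V_\alpha^{(k)}\}_{\alpha\in\sI}$ still a finite open cover of the compact manifold $X$ (shrinking lemma, iterated $N$ times). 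For $1 \leq k \leq N$ set $W_k := V_k^{(k)}\cap\bigcup_{\beta<k}V_\beta^{(k)}$ and choose a smooth cutoff $\chi_k$ on $X$ with $\chi_k \equiv 1$ on a neighborhood of the compact set $\overline{V_k^{(k+1)}}\cap\overline{\bigcup_{\beta<k}V_\beta^{(k+1)}}$ and $\supp\chi_k \subset W_k$ (possible since the latter compact set is contained in the open set $W_k$ by strictness of the shrinkings). Since $G$ is a compact Lie group there is a radius $r_0 = r_0(G)\in(0,1]$ such that $\exp$ is a diffeomorphism from $\{\xi\in\fg:|\xi|<2r_0\}$ onto a coordinate-convex neighborhood of $\id_G$, bi-Lipschitz there, and such that multiplication in $G$, the maps $(g,h)\mapsto hg^{-1}$, and $g\mapsto\Ad_g$ obey uniform Lipschitz bounds on $G$; all constants below are controlled by these data and thus depend only on $g$, $G$, and $\sU$. (An alternative is a Newton/contraction scheme solving the linearized twisted coboundary equation $\xi_\alpha - \Ad_{g_{\alpha\beta}}\xi_\beta = c_{\alpha\beta}$ via a partition of unity; the inductive argument is more transparent and is the one I would write out.)

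\emph{Inductive step.} Suppose for some $k\geq 1$ we have continuous maps $\rho_\alpha:V_\alpha^{(k)}\to G$ for $\alpha<k$ with $\rho_\alpha g_{\alpha\beta}\rho_\beta^{-1} = h_{\alpha\beta}$ on $V_\alpha^{(k)}\cap V_\beta^{(k)}$ for all $\alpha,\beta<k$ and $\sup|\rho_\alpha - \id_G|\leq C_{k-1}\delta \leq r_0$ (the case $k=1$ being vacuous; one may start with $\rho_1:=\id_G$). On the piece $V_k^{(k)}\cap V_\beta^{(k)}$ of $W_k$ define
\[
\rho_k(x) := h_{k\beta}(x)\,\rho_\beta(x)\,g_{k\beta}(x)^{-1}.
\]
Using the cocycle identities $g_{k\beta}g_{\beta\gamma} = g_{k\gamma}$ and $h_{k\beta}h_{\beta\gamma} = h_{k\gamma}$ together with the inductive relation $\rho_\beta = h_{\beta\gamma}\rho_\gamma g_{\beta\gamma}^{-1}$, one checks $h_{k\beta}\rho_\beta g_{k\beta}^{-1} = h_{k\gamma}\rho_\gamma g_{k\gamma}^{-1}$ on triple overlaps, so $\rho_k$ is a well-defined continuous map on $W_k$, and by construction $\rho_k g_{k\beta}\rho_\beta^{-1} = h_{k\beta}$ on $V_k^{(k)}\cap V_\beta^{(k)}$ for every $\beta<k$. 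Since $\sup|h_{k\beta}-g_{k\beta}|\leq\delta$, the uniform Lipschitz bounds give $|h_{k\beta}g_{k\beta}^{-1}-\id_G|\leq C\delta$, and combining with $\sup|\rho_\beta - \id_G|\leq C_{k-1}\delta$ yields $\sup_{W_k}|\rho_k - \id_G|\leq C_k\delta$, which we may assume $\leq r_0$ after decreasing $\eps$.

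\emph{Extension step.} Because $\rho_k(W_k)$ lies in the $\exp$-chart about $\id_G$, $\eta_k := \exp^{-1}\circ\rho_k:W_k\to\fg$ is continuous with $\sup|\eta_k|$ comparable to $\sup_{W_k}|\rho_k - \id_G|$. Then $\tilde\eta_k := \chi_k\eta_k$ extends by zero to a continuous $\fg$-valued map on all of $V_k^{(k)}$ (as $\supp\chi_k\subset W_k$), with $\sup|\tilde\eta_k|\leq\sup|\eta_k|$; define, overwriting the notation, $\rho_k := \exp\circ\tilde\eta_k:V_k^{(k)}\to G$, which equals $\id_G$ off $\supp\chi_k$. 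Since $V_k^{(k+1)}\cap V_\beta^{(k+1)}\subset \overline{V_k^{(k+1)}}\cap\overline{\bigcup_{\beta'<k}V_{\beta'}^{(k+1)}}$ for $\beta<k$, on which $\chi_k\equiv 1$, the extended $\rho_k$ still satisfies $\rho_k g_{k\beta}\rho_\beta^{-1} = h_{k\beta}$ there; restricting the earlier $\rho_\alpha$ ($\alpha<k$) to $V_\alpha^{(k+1)}$ and adding $\rho_k|_{V_k^{(k+1)}}$ gives data over $\{V_\alpha^{(k+1)}\}_{\alpha\leq k}$ with $\sup|\rho_\alpha-\id_G|\leq C_k\delta$, completing the induction. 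After $N$ steps we obtain continuous $\rho_\alpha:V_\alpha := V_\alpha^{(N)}\to G$ with $\rho_\alpha g_{\alpha\beta}\rho_\beta^{-1} = h_{\alpha\beta}$ on $V_\alpha\cap V_\beta$ and $\sup|\rho_\alpha - \id_G|\leq C_N\delta =: C\delta$; by \cite[Proposition 5.2.5]{Husemoller} (equation \eqref{eq:Equivalent_transition_functions}) the bundles $P_g$ and $P_h$ are isomorphic, which proves the proposition with $C := C_N$ and $\eps$ the least of the smallness thresholds imposed along the way.

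\emph{Main obstacle.} The substance of the argument is the bookkeeping: arranging the nested strict shrinkings and the cutoffs so that the extension at stage $k$ leaves the matching relations undisturbed on the overlaps that survive into the final cover $\{V_\alpha^{(N)}\}$, and verifying that the constants $C_k$ and the successive smallness thresholds on $\eps$ remain controlled. This is automatic because there are only $N = |\sI|$ stages, so the final constants $C$ and $\eps$ depend only on $g$, $G$, and $\sU$; but the shrinking/cutoff data must be fixed in advance for the extension step to be legitimate. Compactness of $G$ enters precisely to make all the Lipschitz constants in the inductive and extension steps uniform.
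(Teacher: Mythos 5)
The paper does not prove Proposition~\ref{prop:Uhlenbeck_1982Lp_3-2}; it is quoted from Uhlenbeck \cite[Proposition 3.2]{UhlLp} and Wehrheim \cite[Lemma 7.2 (i)]{Wehrheim_2004}. Your inductive construction is, in substance, the same argument those references use: enumerate the cover, define $\rho_k$ on the union of existing overlaps by the forced compatibility formula $\rho_k = h_{k\beta}\rho_\beta g_{k\beta}^{-1}$, verify well-definedness from the two cocycle conditions and the inductive matching, and extend to the $k$-th chart via a cutoff in exponential coordinates, shrinking the cover once per stage so the matching relations survive. The uniform control of all Lipschitz constants (for multiplication, inversion, $\Ad$, and the exponential chart) by compactness of $G$ is exactly what makes the constant $C$ depend only on $g$, $G$, and $\sU$ after finitely many stages, and your verification of the triple-overlap consistency, $h_{k\beta}\rho_\beta g_{k\beta}^{-1} = h_{k\gamma}\rho_\gamma g_{k\gamma}^{-1}$, is correct.

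Two cosmetic points. First, there is an off-by-one in the nesting: after step $k$ your scheme leaves data on $\{V_\alpha^{(k+1)}\}_{\alpha\le k}$, so after $N$ steps you have data on level $N+1$, while you declared $V_\alpha=V_\alpha^{(N)}$ to be the deepest level. Either take $N+1$ nested strict shrinkings $V^{(0)}\Supset\cdots\Supset V^{(N+1)}$, or re-index so that the cutoff at stage $k$ is $\equiv 1$ near the closure of $V_k^{(k)}\cap\bigcup_{\beta<k}V_\beta^{(k)}$ with support in $V_k^{(k-1)}\cap\bigcup_{\beta<k}V_\beta^{(k-1)}$; either fix is trivial. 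Second, when you invoke \eqref{eq:Equivalent_transition_functions} at the end, note that the proposition's relation $\rho_\alpha g_{\alpha\beta}\rho_\beta^{-1}=h_{\alpha\beta}$ matches Husemoller's $h_{\alpha\beta}=\tau_\alpha^{-1}g_{\alpha\beta}\tau_\beta$ after setting $\tau_\alpha:=\rho_\alpha^{-1}$; the isomorphism conclusion is unaffected, but the conventions differ by an inverse and are worth flagging. With those small adjustments the proof is complete and correct.
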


\begin{rmk}[Dependencies of the constants $\eps$ and $C$ in Proposition \ref{prop:Uhlenbeck_1982Lp_3-2}]
\label{rmk:Uhlenbeck_1982Lp_proposition_3-2_constant_dependencies}
The dependencies of the constants $\eps$ and $C$ in \cite[Proposition 3.2]{UhlLp} are not explicitly labeled, but those in Proposition \ref{prop:Uhlenbeck_1982Lp_3-2} are inferred from its proof in \cite{UhlLp}.
\end{rmk}

Next, we have the

\begin{thm}[$W^{2,p}$ bounds on transition functions for continuous principal bundles with $L^d$-small local connection one-forms in Coulomb gauge]
\label{thm:W2p_bound_transition_functions_Ld_small_connection_oneforms}
Let $(X,g)$ be a closed, smooth Riemannian manifold of dimension $d\geq 2$, and $G$ be a compact Lie group, $q>d/2$ be a constant, $P$ be a $W^{2,q}$ principal $G$-bundle over $X$, and $\sU = \{U_\alpha\}_{\alpha\in\sI}$ be a finite cover of $X$ by open subsets. Let $A$ be a $W^{1,d/2}$ connection on $P$ and $\sigma_\alpha:U_\alpha \to P$ be $W^{2,q}$ local sections such that the local connection one-forms
\[
a_\alpha := \sigma_\alpha^*A \in W^{1,d/2}(U_\alpha;T^*X\otimes\fg)
\]
obey, for each $\alpha \in \sI$,
\[
d^{*_g}a_\alpha = 0 \quad\text{a.e. on } U_\alpha.
\]
Let $\{g_{\alpha\beta}\}_{\alpha,\beta\in\sI}$ be the corresponding set of transition functions in $W^{2,q}(U_\alpha\cap U_\beta;G)$ for each $\alpha,\beta \in \sI$ such that $U_\alpha\cap U_\beta \neq \emptyset$. If $p \leq q$ obeys\footnote{By analogy with Corollary \ref{cor:Uhlenbeck_theorem_1-3_p_geq_d} or \cite[Corollary 4.4]{Feehan_yangmillsenergygapflat_aim}, the condition $p < d$ could be relaxed to $p \leq q$ at the expense in the hypothesis \eqref{eq:Coulomb_gauge_connection_one-forms_Ld_small} of replacing the $L^d$ norm by an $L^p$ norm when $p > d$.}
$1 < p < d$ and $\sV = \{V_\alpha\}_{\alpha\in\sI}$ is a finite cover of $X$ by open subsets such that $V_\alpha \Subset U_\alpha$, then there are constants $C = C(g,G,p,\sU,\sV) \in [1,\infty)$ and $\eps = \eps(g,G,p,\sU,\sV) \in (0,1]$ with the following significance. If
\begin{equation}
\label{eq:Coulomb_gauge_connection_one-forms_Ld_small}
\max_{\alpha \in \sI}\|a_\alpha\|_{L^d(U_\alpha \cap U_\beta)} \leq \eps,
\end{equation}
then
\begin{equation}
\label{eq:Coulomb_gauge_transition_functions_W2p_bound}
\|g_{\alpha\beta}\|_{W^{2,p}(V_\alpha\cap V_\beta)} \leq C.
\end{equation}
\end{thm}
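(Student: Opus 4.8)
The plan is to derive the estimate \eqref{eq:Coulomb_gauge_transition_functions_W2p_bound} from the transformation law relating the transition functions to the Coulomb-gauge connection one-forms, together with elliptic regularity. Recall that on $U_\alpha\cap U_\beta$ the local sections satisfy $\sigma_\beta = \sigma_\alpha\cdot g_{\alpha\beta}$, so the local connection one-forms obey the standard compatibility relation
\[
a_\beta = g_{\alpha\beta}^{-1}a_\alpha g_{\alpha\beta} + g_{\alpha\beta}^{-1}\,dg_{\alpha\beta}
\quad\text{a.e. on } U_\alpha\cap U_\beta.
\]
Rearranging, $dg_{\alpha\beta} = a_\alpha g_{\alpha\beta} - g_{\alpha\beta}a_\beta$, which is the key identity: it expresses the derivative of $g_{\alpha\beta}$ in terms of $g_{\alpha\beta}$ itself (pointwise bounded, since $G$ is compact and we use the fixed faithful unitary representation $G\hookrightarrow\End(\CC^N)$) and the one-forms $a_\alpha, a_\beta$, which are small in $L^d$ by \eqref{eq:Coulomb_gauge_connection_one-forms_Ld_small}.

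The first step is a bootstrap on $W^{1,r}$ regularity of $g_{\alpha\beta}$. Since $|g_{\alpha\beta}|\leq c(G)$ pointwise, the identity gives $\|dg_{\alpha\beta}\|_{L^d} \leq c(G)\big(\|a_\alpha\|_{L^d} + \|a_\beta\|_{L^d}\big)$ on $U_\alpha\cap U_\beta$, hence $g_{\alpha\beta}\in W^{1,d}$ on slightly shrunken domains with a bound. To improve to $W^{2,p}$ we use the Coulomb condition $d^{*_g}a_\alpha=0$ together with the Hodge Laplacian. Apply $d^{*_g}$ or, more conveniently, compute $\Delta g_{\alpha\beta}$: differentiating $dg_{\alpha\beta}=a_\alpha g_{\alpha\beta}-g_{\alpha\beta}a_\beta$ and using $d^{*_g}a_\alpha = d^{*_g}a_\beta = 0$ yields a Laplace-type equation of schematic form
\[
\Delta g_{\alpha\beta} = a_\alpha * dg_{\alpha\beta} + dg_{\alpha\beta} * a_\beta + (\text{zeroth-order terms in } a_\alpha, a_\beta, g_{\alpha\beta}),
\]
where I substitute $dg_{\alpha\beta} = a_\alpha g_{\alpha\beta}-g_{\alpha\beta}a_\beta$ again so the right-hand side is bounded pointwise by $C\big(|a_\alpha|^2 + |a_\beta|^2 + |da_\alpha| + |da_\beta|\big)$. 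Now $a_\alpha \in W^{1,d/2}$ gives $da_\alpha\in L^{d/2}$, and by the Sobolev embedding $W^{1,d/2}\subset L^d$ together with \eqref{eq:Coulomb_gauge_connection_one-forms_Ld_small} we get $|a_\alpha|^2 \in L^{d/2}$ with small norm. So the right-hand side lies in $L^{d/2}$ on $U_\alpha\cap U_\beta$ with a bound depending on $g,G$ and the $W^{1,d/2}$ norms of the $a_\alpha$ (which in turn are controlled via Uhlenbeck's Theorem~\ref{thm:Uhlenbeck_Lp_1-3}/Theorem~\ref{thm:Uhlenbeck_Chern_corollary_4-3} by $\|F_A\|_{L^{d/2}}$). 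Interior elliptic estimates for $\Delta$ on the pair $V_\alpha\cap V_\beta \Subset U_\alpha\cap U_\beta$ then give $\|g_{\alpha\beta}\|_{W^{2,d/2}(V_\alpha\cap V_\beta)}\leq C$. This already proves the theorem in the borderline case $p=d/2$; for $d/2 < p < d$ one runs one further elliptic bootstrap step (using $W^{2,d/2}\subset W^{1,d^*}$ with $d^* = d^2/(2d-d) = d\cdot\frac{d}{2}/(d-\frac{d}{2})$, improving the regularity of the right-hand side), and for $1<p<d/2$ the desired estimate follows from the $p=d/2$ case by Hölder's inequality on the bounded domain $V_\alpha\cap V_\beta$. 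Throughout, the Kato inequality \cite[Equation (6.20)]{FU} is used to pass from scalar Sobolev inequalities to the matrix-valued setting.

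The main obstacle is making the elliptic estimates genuinely local and absorbing the nonlinear term: the equation $\Delta g_{\alpha\beta}=(\text{quadratic in }a)+(\text{linear in }da)$ is only quasi-linear with bounded (not small) coefficient $g_{\alpha\beta}$, so one must be careful that the interior elliptic estimate on $V_\alpha\cap V_\beta\Subset U_\alpha\cap U_\beta$ produces a bound of the form $\|g_{\alpha\beta}\|_{W^{2,p}(V)}\leq C\big(\|\Delta g_{\alpha\beta}\|_{L^p(U)}+\|g_{\alpha\beta}\|_{L^p(U)}\big)$ with the lower-order term harmlessly bounded by $c(G)\vol(U)^{1/p}$. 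One also needs the hypothesis $\|a_\alpha\|_{L^d}\leq\eps$ small enough that the implicit constants in the repeated Sobolev multiplications stay uniform, and must track that the covers $\sU$, $\sV$ enter only through the finitely many fixed geometric constants (injectivity radius, cutoff functions, number of charts). A secondary technical point is the regularity of $g_{\alpha\beta}$ needed to even write the Laplace equation: a priori $g_{\alpha\beta}\in W^{2,q}$ by hypothesis on $P$, so all manipulations are justified for smooth or $W^{2,q}$ data, and the estimate \eqref{eq:Coulomb_gauge_transition_functions_W2p_bound} is then an a priori bound independent of that regularity.
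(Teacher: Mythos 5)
Your overall strategy matches the paper's: from the transition-function identity expressing $dg_{\alpha\beta}$ in terms of $a_\alpha$, $a_\beta$, and $g_{\alpha\beta}$, apply $d^{*_g}$ and use the Coulomb conditions $d^{*_g}a_\alpha = d^{*_g}a_\beta = 0$ to obtain a Laplace-type equation for $g_{\alpha\beta}$, then run an interior elliptic $L^p$ estimate after cutting off, using the pointwise bound $\|g_{\alpha\beta}\|_{L^\infty}\leq C(G)$ from compactness of $G$; this is what the paper does when it adapts the proof of Theorem~\ref{thm:Uhlenbeck_Lp_bound_3-2_Sobolev_Ld_small_connection_oneforms} with $b_\alpha = a_\alpha$, $h_{\alpha\beta} = g_{\alpha\beta}$. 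However, there is a factual error: once Coulomb kills the $d^{*_g}a_\alpha$, $d^{*_g}a_\beta$ terms, the right-hand side of $d^{*_g}dg_{\alpha\beta}$ consists \emph{only} of pairings of $dg_{\alpha\beta}$ with $a_\alpha$, $a_\beta$; no $da_\alpha$ or $da_\beta$ terms arise, and your subsequent appeal to $W^{1,d/2}$ bounds on the $a_\alpha$ (via Uhlenbeck's Theorem~\ref{thm:Uhlenbeck_Lp_1-3} and $\|F_A\|_{L^{d/2}}$) is both unnecessary and impermissible, since the theorem's only quantitative hypothesis is the $L^d$-smallness \eqref{eq:Coulomb_gauge_connection_one-forms_Ld_small}, and the constant $C$ in \eqref{eq:Coulomb_gauge_transition_functions_W2p_bound} must depend only on $g,G,p,\sU,\sV$, not on curvature or higher Sobolev norms of the connection.

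The more serious gap is that your early back-substitution of $dg_{\alpha\beta} = a_\alpha g_{\alpha\beta} \pm g_{\alpha\beta}a_\beta$ into the right-hand side limits you to $p\leq d/2$, and the proposed bootstrap for $d/2 < p < d$ does not close. After substitution the source term is of size at most $|a_\alpha|^2 + |a_\beta|^2$, controlled only in $L^{d/2}$ (since the hypothesis gives only $a_\alpha\in L^d$), so elliptic regularity gives at most $W^{2,d/2}$; further iterations cannot improve this, because the source has fixed $L^{d/2}$ regularity, and the gain from the embedding $W^{2,d/2}\subset W^{1,d}$ merely re-derives $dg_{\alpha\beta}\in L^d$, already known from the first-order identity. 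The paper avoids this dead end by \emph{not} substituting: it keeps the bilinear term $dg_{\alpha\beta}\wedge a$ intact, bounds it by $\|d(\varphi_{\alpha\beta}g_{\alpha\beta})\|_{L^{p^*}(X)}\left(\|a_\alpha\|_{L^d}+\|a_\beta\|_{L^d}\right)$ with $p^* = dp/(d-p)$ and $1/p = 1/p^* + 1/d$, uses the continuous embedding $W^{2,p}\subset W^{1,p^*}$ so that this term is controlled by $\eps\,\|\varphi_{\alpha\beta}g_{\alpha\beta}\|_{W^{2,p}(X)}$, and then absorbs it by rearrangement for $\eps$ small. That rearrangement, not a bootstrap, is what makes the estimate work uniformly for all $1 < p < d$.
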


\begin{rmk}[Uniform H\"older norm bounds on transition functions for continuous principal bundles with $L^d$-small local connection one-forms in Coulomb gauge]
\label{rmk:Holder_bound_transition_functions_Ld_small_connection_oneforms}
Recall from \cite[Theorem 4.12, Part II]{AdamsFournier} that there is a continuous embedding,
\[
W^{2,p}(U;\RR) \subset C^\delta(\bar U;\RR),
\]
where for an open subset $U \subset \RR^d$ (obeying an interior cone condition) and
\begin{inparaenum}[\itshape a\upshape)]
\item $0 < \delta \leq 2-(d/p)$ if $p < d < 2p$, or
\item $0 < \delta < 1$ if $p = d$,
\end{inparaenum}
and so the transition functions $g_{\alpha\beta}$ in Theorem \ref{thm:W2p_bound_transition_functions_Ld_small_connection_oneforms} obey a uniform $C^\delta(\bar V_\alpha\cap \bar V_\beta;G)$ bound.
\end{rmk}

The proof of Theorem \ref{thm:W2p_bound_transition_functions_Ld_small_connection_oneforms} is very similar to the proof of the forthcoming Theorem \ref{thm:Uhlenbeck_Lp_bound_3-2_Sobolev_Ld_small_connection_oneforms} and so is omitted: one simply uses $b_\alpha = a_\alpha$ and $h_{\alpha\beta} = g_{\alpha\beta}$ in the proof of Theorem \ref{thm:Uhlenbeck_Lp_bound_3-2_Sobolev_Ld_small_connection_oneforms} and notes that because $G$ is compact, $\|g_{\alpha\beta}\|_{L^\infty(U_\alpha\cap U_\beta)} \leq C$. Our proof of the forthcoming Theorem \ref{thm:Uhlenbeck_Chern_corollary_4-3_critical_existence} relies on the following generalization of Proposition \ref{prop:Uhlenbeck_1982Lp_3-2}.

\begin{thm}[Continuous principal bundles with $L^d$-small local connection one-forms in Coulomb gauge]
\label{thm:Uhlenbeck_Lp_bound_3-2_Sobolev_Ld_small_connection_oneforms}
Let $(X,g)$ be a closed, smooth Riemannian manifold of dimension $d\geq 2$, and $G$ be a compact Lie group, $q>d/2$ be a constant, $P$ and $Q$ be $W^{2,q}$ principal $G$-bundles over $X$, and $\sU=\{U_\alpha\}_{\alpha\in\sI}$ be a finite cover of $X$ by open subsets. Let $A$ and $B$ be $W^{1,d/2}$ connections on $P$ and $Q$, respectively, and $\sigma_\alpha,\, \varsigma_\alpha:U_\alpha \to P$ be $W^{2,q}$ local sections such that the local connection one-forms,
\[
a_\alpha := \sigma_\alpha^*A \in W^{1,d/2}(U_\alpha;T^*X\otimes\fg)
\quad\text{and}\quad
b_\alpha := \varsigma_\alpha^*B \in W^{1,d/2}(U_\alpha;T^*X\otimes\fg),
\]
obey, for each $\alpha \in \sI$,
\[
d^{*_g}a_\alpha = 0 = d^{*_g}b_\alpha \quad\text{a.e. on } U_\alpha.
\]
Let $\{g_{\alpha\beta}\}_{\alpha,\beta\in\sI}$ and $\{h_{\alpha\beta}\}_{\alpha,\beta\in\sI}$ be the corresponding sets of transition functions in $W^{2,q}(U_\alpha\cap U_\beta;G)$ for each $\alpha,\beta \in \sI$ such that $U_\alpha\cap U_\beta \neq \emptyset$. If $p \leq q$ obeys\footnote{By analogy with Corollary \ref{cor:Uhlenbeck_theorem_1-3_p_geq_d} or \cite[Corollary 4.4]{Feehan_yangmillsenergygapflat_aim}, the condition $p < d$ could be relaxed to $p \leq q$ at the expense in the hypothesis \eqref{eq:Coulomb_gauge_connection_one-forms_Ld_small_pair} of replacing the $L^d$ norm by an $L^p$ norm when $p > d$.}
$1 < p < d$ and $\sV=\{V_\alpha\}_{\alpha\in\sI}$ is a finite cover of $X$ by open subsets such that $V_\alpha \Subset U_\alpha$, then there are constants $C = C(g,G,p,\sU,\sV) \in [1,\infty)$ and $\eps = \eps(g,G,p) \in (0,1]$ with the following significance. If
\begin{equation}
\label{eq:Coulomb_gauge_connection_one-forms_Ld_small_pair}
\max_{\alpha \in \sI}\|a_\alpha\|_{L^d(U_\alpha \cap U_\beta)} \leq \eps
\quad\text{and}\quad
\max_{\alpha \in \sI}\|b_\alpha\|_{L^d(U_\alpha \cap U_\beta)} \leq \eps,
\end{equation}
then
\begin{multline}
\label{eq:Coulomb_gauge_transition_functions_W2p_bound_connection_one-forms}
\|g_{\alpha\beta}-h_{\alpha\beta}\|_{W^{2,p}(V_\alpha\cap V_\beta)}
\leq C\|g_{\alpha\beta}-h_{\alpha\beta}\|_{L^p(U_\alpha\cap U_\beta)}
\\
+ C\left(\|a_\alpha-b_\alpha\|_{L^d(U_\alpha\cap U_\beta)} + \|a_\beta-b_\beta\|_{L^d(U_\alpha\cap U_\beta)} \right).
\end{multline}
Moreover, if
\begin{equation}
\label{eq:Coulomb_gauge_transition_functions_Lp_close}
\max_{\alpha,\beta \in \sI}\|g_{\alpha\beta}-h_{\alpha\beta}\|_{L^p(U_\alpha\cap U_\beta)} \leq \eps,
\end{equation}
then
\begin{equation}
\label{eq:Coulomb_gauge_transition_functions_W2p_small}
\max_{\alpha,\beta \in \sI}
\|g_{\alpha\beta} - h_{\alpha\beta}\|_{W^{2,p}(V_\alpha\cap V_\beta)} \leq C\eps.
\end{equation}
Finally, if $p > d/2$ and $\eps= \eps(g,G,p,\sU,\sV) \in (0,1]$ is sufficiently small, then $P$ is isomorphic to $Q$ as a continuous principal $G$-bundle.
\end{thm}

\begin{proof}
We proceed by simplifying Taubes' proof of his \cite[Lemma A.1]{TauPath} (where $d=4$) and Rivi\`ere's proof of his \cite[Theorem IV.1]{Riviere_2002} (where $d\geq 4$). Let us observe that $d^{*_g} = -*_gd*_g:\Omega^1(X;\End(\fg))\to\Omega^0(X;\End(\fg))$ by \cite[Section 6.1]{Warner}, where $*=*_g:\Omega^l(X;\RR)\to\Omega^{d-l}(X;\RR)$ (for integers $0\leq l \leq d$) is the Hodge $*$-operator for the Riemannian metric $g$ on $X$ and we write $d^*=d^{*_g}$ for brevity in the remainder of the proof. Because $d^*a_\alpha = 0$ on $U_\alpha$ for all $\alpha \in \sI$, then the identity
%TODO Compare \cite[Equation (6.12)]{Feehan_yangmillsenergygapflat}
%TODO Include primary source reference for dg_{\alpha\beta} convention & correct sign
\begin{comment}
From below cited Equation (6.12):
dg_{\alpha\beta} = g_{\alpha\beta}a_\alpha - a_\beta g_{\alpha\beta}.
This implies:
g_{\alpha\beta}^{-1}dg_{\alpha\beta} = a_\alpha - g_{\alpha\beta}^{-1}a_\beta g_{\alpha\beta}
This implies:
a_\alpha = g_{\alpha\beta}^{-1}a_\beta g_{\alpha\beta} + g_{\alpha\beta}^{-1}dg_{\alpha\beta}
\end{comment}
\begin{equation}
\label{eq:dg_alphabeta}
dg_{\alpha\beta} = g_{\alpha\beta}a_\beta + a_\alpha g_{\alpha\beta} \quad\text{on } U_\alpha\cap U_\beta,
\end{equation}
yields
\begin{align*}
d^*dg_{\alpha\beta} &= -*(dg_{\alpha\beta}\wedge *a_\beta) + g_{\alpha\beta} d^*a_\beta + (d^*a_\alpha) g_{\alpha\beta} + *((*a_\alpha)\wedge dg_{\alpha\beta})
\\
&= -*(dg_{\alpha\beta}\wedge *a_\beta) + *((*a_\alpha)\wedge dg_{\alpha\beta}) \quad\text{on } U_\alpha\cap U_\beta.
\end{align*}
Similarly, we have
\begin{align*}
dh_{\alpha\beta} &= h_{\alpha\beta}b_\beta + b_\alpha h_{\alpha\beta},
\\
d^*dh_{\alpha\beta} &= -*(dh_{\alpha\beta}\wedge *b_\beta) + *((*b_\alpha)\wedge dh_{\alpha\beta}) \quad\text{on } U_\alpha\cap U_\beta.
\end{align*}
For brevity, define
\[
f_{\alpha\beta} := g_{\alpha\beta}-h_{\alpha\beta} \quad\text{on } U_\alpha\cap U_\beta, \quad\forall\,\alpha,\beta \in \sI,
\]
and observe that, by subtracting the corresponding the equations for $dh_{\alpha\beta}$ and $d^*dh_{\alpha\beta}$ from those for $dg_{\alpha\beta}$ and $d^*dg_{\alpha\beta}$, we obtain
\begin{align}
\label{eq:df_alpha_beta}
df_{\alpha\beta} &= f_{\alpha\beta}a_\beta + a_\alpha f_{\alpha\beta} + h_{\alpha\beta}(a_\beta-b_\beta) + (a_\alpha-b_\alpha) h_{\alpha\beta},
  \\
  \label{eq:Deltaf_alpha_beta} 
d^*df_{\alpha\beta}
&=
                      -*(df_{\alpha\beta}\wedge *a_\beta) + *((*a_\alpha)\wedge df_{\alpha\beta})
  \\
  \notag
  &\quad -*(dh_{\alpha\beta}\wedge *(a_\beta-b_\beta)) + *((*(a_\alpha-b_\alpha))\wedge dh_{\alpha\beta}).
\end{align}
If $\varphi_{\alpha_\beta} \in C_0^\infty(U_\alpha\cap U_\beta;\RR)$, then
\begin{align*}
d(\varphi_{\alpha_\beta}f_{\alpha\beta})
&= (d\varphi_{\alpha_\beta})f_{\alpha\beta} + \varphi_{\alpha_\beta}df_{\alpha\beta},
\\
d^*d(\varphi_{\alpha_\beta}f_{\alpha\beta})
&=
(d^*d\varphi_{\alpha_\beta})f_{\alpha\beta} + 2\langle \grad\varphi_{\alpha_\beta}, \grad f_{\alpha\beta}\rangle
+ \varphi_{\alpha_\beta}d^*df_{\alpha\beta}.
\end{align*}
Therefore, writing $c_\alpha : = a_\alpha-b_\alpha$ for $\alpha \in \sI$ for brevity, we have
\begin{align*}
d^*d(\varphi_{\alpha_\beta}f_{\alpha\beta})
&=
-*(df_{\alpha\beta}\wedge *a_\beta)\varphi_{\alpha_\beta} + *((*a_\alpha)\wedge df_{\alpha\beta})\varphi_{\alpha_\beta}
\\
&\quad - *(dh_{\alpha\beta}\wedge *c_\beta)\varphi_{\alpha_\beta} + *((*c_\alpha)\wedge dh_{\alpha\beta})\varphi_{\alpha_\beta}
\\
&\quad + (d^*d\varphi_{\alpha_\beta})f_{\alpha\beta} + 2\langle \grad\varphi_{\alpha_\beta}, \grad f_{\alpha\beta}\rangle,
\end{align*}
which gives
\begin{align*}
d^*d(\varphi_{\alpha_\beta}f_{\alpha\beta})
&=
-*(d(\varphi_{\alpha_\beta}f_{\alpha\beta})\wedge *a_\beta) + *((*a_\alpha)\wedge d(\varphi_{\alpha_\beta}f_{\alpha\beta}))
\\
&\quad + *((d\varphi_{\alpha_\beta})f_{\alpha\beta}\wedge *a_\beta) - *((*a_\alpha)\wedge (d\varphi_{\alpha_\beta})f_{\alpha\beta})
\\
&\quad - *(dh_{\alpha\beta}\wedge *c_\beta)\varphi_{\alpha_\beta} + *((*c_\alpha)\wedge dh_{\alpha\beta})\varphi_{\alpha_\beta}
\\
&\quad + (d^*d\varphi_{\alpha_\beta})f_{\alpha\beta} + 2\langle \grad\varphi_{\alpha_\beta}, \grad f_{\alpha\beta}\rangle.
\end{align*}
Assume that $\supp\varphi_\alpha \subset U_\alpha'$, where $U_\alpha' \Subset U_\alpha$ is an open subset (obeying an interior cone condition) for each $\alpha\in\sI$. Thus, for $d\geq 2$ and $p \in [1,d)$ and $p^* = dp/(d-p)\in [d/(d-1), \infty)$, so $1/p = 1/p^* + 1/d$, we have (provided $q \geq p$), for $c=c(g,G)\in [1,\infty)$,
\begin{align*}
\|d^*d(\varphi_{\alpha_\beta}f_{\alpha\beta})\|_{L^p(X)}
&\leq
c\|d(\varphi_{\alpha_\beta}f_{\alpha\beta})\|_{L^{p^*}(X)}
\left( \|a_\alpha\|_{L^d(U_\alpha\cap U_\beta)} + \|a_\beta\|_{L^d(U_\alpha\cap U_\beta)} \right)
\\
&\quad + c\|f_{\alpha\beta}\|_{L^{p^*}(U_\alpha'\cap U_\beta')}
\left( \|a_\alpha\|_{L^d(U_\alpha\cap U_\beta)} + \|a_\beta\|_{L^d(U_\alpha\cap U_\beta)} \right)
\\
&\quad + c\|dh_{\alpha\beta}\|_{L^{p^*}(U_\alpha\cap U_\beta)}\left(\|a_\alpha-b_\alpha\|_{L^d(U_\alpha\cap U_\beta)} + \|a_\beta-b_\beta\|_{L^d(U_\alpha\cap U_\beta)} \right),
\\
&\quad + c\left(\|d^*d\varphi_{\alpha_\beta}\|_{L^\infty(X)}\|f_{\alpha\beta}\|_{L^p(U_\alpha'\cap U_\beta')} + \|d\varphi_{\alpha_\beta}\|_{L^\infty(X)}\|df_{\alpha\beta}\|_{L^p(U_\alpha'\cap U_\beta')}\right).
\end{align*}
Using $W^{1,p}(U;\RR) \subset L^{p^*}(U;\RR)$, the continuous embedding of Sobolev spaces given by \cite[Theorem 4.12, Part I, Case C]{AdamsFournier}, for an open subset $U \subset \RR^d$ (obeying an interior cone condition), we obtain
\begin{align}
\label{eq:Coulomb_gauge_transition_functions_W2p_bound_connection_one-forms_raw}   
{}&\|d^*d(\varphi_{\alpha_\beta}f_{\alpha\beta})\|_{L^p(X)}
  \\
  \notag
&\quad \leq
C\|\varphi_{\alpha_\beta}f_{\alpha\beta}\|_{W^{2,p}(X)}
\left( \|a_\alpha\|_{L^d(U_\alpha\cap U_\beta)} + \|a_\beta\|_{L^d(U_\alpha\cap U_\beta)} \right)
  \\
  \notag
&\qquad + C\|f_{\alpha\beta}\|_{W^{1,p}(U_\alpha'\cap U_\beta')}
\left(\|a_\alpha\|_{L^d(U_\alpha\cap U_\beta)} + \|a_\beta\|_{L^d(U_\alpha\cap U_\beta)} \right)
  \\
  \notag
&\qquad + C\|h_{\alpha\beta}\|_{W^{2,p}(U_\alpha\cap U_\beta)}\left(\|a_\alpha-b_\alpha\|_{L^d(U_\alpha\cap U_\beta)} + \|a_\beta-b_\beta\|_{L^d(U_\alpha\cap U_\beta)} \right)
  \\
  \notag
&\qquad + C\left(\|d^*d\varphi_{\alpha_\beta}\|_{L^\infty(X)}\|f_{\alpha\beta}\|_{L^p(U_\alpha'\cap U_\beta')} + \|d\varphi_{\alpha_\beta}\|_{L^\infty(X)}\|df_{\alpha\beta}\|_{L^p(U_\alpha'\cap U_\beta')}\right),
\end{align}
for a constant $C=C(g,G,p,\sU)\in[1,\infty)$. For any $p \in (1,\infty)$, the following bound follows from the \apriori $L^p$ interior estimate for a linear second-order elliptic operator with scalar principal symbol \cite[Theorem 9.11]{GilbargTrudinger},
\begin{equation}
\label{eq:Gilbarg_Trudinger_9-36}
\|\varphi_{\alpha_\beta}f_{\alpha\beta}\|_{W^{2,p}(X)} \leq C\left(\|d^*d(\varphi_{\alpha_\beta}f_{\alpha\beta})\|_{L^p(X)} + \|\varphi_{\alpha_\beta}f_{\alpha\beta}\|_{L^p(X)}\right),
\end{equation}
for $C = C(g,G,p) \in [1,\infty)$. Hence, for $a_\alpha$ obeying \eqref{eq:Coulomb_gauge_connection_one-forms_Ld_small_pair} and choosing $\eps=\eps(g,G,p)\in[1,\infty)$ sufficiently small, rearrangement in \eqref{eq:Coulomb_gauge_transition_functions_W2p_bound_connection_one-forms_raw} with the aid of \eqref{eq:Gilbarg_Trudinger_9-36} gives
\begin{align*}
\|d^*d(\varphi_{\alpha_\beta}f_{\alpha\beta})\|_{L^p(X)}
&\leq C\|f_{\alpha\beta}\|_{W^{1,p}(U_\alpha'\cap U_\beta')}
\left(\|a_\alpha\|_{L^d(U_\alpha\cap U_\beta)} + \|a_\beta\|_{L^d(U_\alpha\cap U_\beta)} \right)
\\
&\quad + C\|h_{\alpha\beta}\|_{W^{2,p}(U_\alpha\cap U_\beta)}\left(\|a_\alpha-b_\alpha\|_{L^d(U_\alpha\cap U_\beta)} + \|a_\beta-b_\beta\|_{L^d(U_\alpha\cap U_\beta)} \right)
\\
&\quad + C\left(\|d^*d\varphi_{\alpha_\beta}\|_{L^\infty(X)}\|f_{\alpha\beta}\|_{L^p(U_\alpha'\cap U_\beta')} + \|d\varphi_{\alpha_\beta}\|_{L^\infty(X)}\|df_{\alpha\beta}\|_{L^p(U_\alpha'\cap U_\beta')}\right),
\end{align*}
and thus
\begin{align*}
\|\varphi_{\alpha_\beta}f_{\alpha\beta}\|_{W^{2,p}(X)}
&\leq C\|h_{\alpha\beta}\|_{W^{2,p}(U_\alpha\cap U_\beta)}\left(\|a_\alpha-b_\alpha\|_{L^d(U_\alpha\cap U_\beta)} + \|a_\beta-b_\beta\|_{L^d(U_\alpha\cap U_\beta)} \right)
\\
&\quad + C\left(1 + \|d^*d\varphi_{\alpha_\beta}\|_{L^\infty(X)}\right)\|f_{\alpha\beta}\|_{L^p(U_\alpha'\cap U_\beta')}
\\
&\quad + C\left(1 + \|d\varphi_{\alpha_\beta}\|_{L^\infty(X)}\right)\|df_{\alpha\beta}\|_{L^p(U_\alpha'\cap U_\beta')}.
\end{align*}
We now choose $\varphi_{\alpha\beta}$ obeying $\varphi_{\alpha\beta}=1$ on $V_\alpha\cap V_\beta$, so the preceding inequality and the $W^{2,p}$ bounds \eqref{eq:Coulomb_gauge_transition_functions_W2p_bound} for $h_{\alpha\beta}$ (with $g_{\alpha\beta}$ and $V_\alpha$ replaced by $h_{\alpha\beta}$ and $U_\alpha$, respectively) yield
\begin{multline}
\label{eq:Coulomb_gauge_transition_functions_W2p_bound_connection_one-forms_refined}  
\|g_{\alpha\beta}-h_{\alpha\beta}\|_{W^{2,p}(V_\alpha\cap V_\beta)}
\leq C\left(\|g_{\alpha\beta}-h_{\alpha\beta}\|_{L^p(U_\alpha\cap U_\beta)} + \|df_{\alpha\beta}\|_{L^p(U_\alpha\cap U_\beta)} \right)
\\
+ C\left(\|a_\alpha-b_\alpha\|_{L^d(U_\alpha\cap U_\beta)} + \|a_\beta-b_\beta\|_{L^d(U_\alpha\cap U_\beta)} \right),
\end{multline}
recalling that $f_{\alpha\beta} = g_{\alpha\beta}-h_{\alpha\beta}$. We can eliminate the term $\|df_{\alpha\beta}\|_{L^p(U_\alpha\cap U_\beta)}$ from the right-hand side of the inequality \eqref{eq:Coulomb_gauge_transition_functions_W2p_bound_connection_one-forms_refined} by using the identity \eqref{eq:df_alpha_beta} to give the estimate
\begin{align*}
\|df_{\alpha\beta}\|_{L^p(U_\alpha\cap U_\beta)} &\leq c\|f_{\alpha\beta}\|_{L^{p^*}(U_\alpha\cap U_\beta)}
\left( \|a_\alpha\|_{L^d(U_\alpha\cap U_\beta)} + \|a_\beta\|_{L^d(U_\alpha\cap U_\beta)} \right)
  \\
                                                 &\quad + c\|h_{\alpha\beta}\|_{L^{p^*}(U_\alpha\cap U_\beta)}\left(\|a_\alpha-b_\alpha\|_{L^d(U_\alpha\cap U_\beta)} + \|a_\beta-b_\beta\|_{L^d(U_\alpha\cap U_\beta)} \right)
  \\
   &\leq C\|f_{\alpha\beta}\|_{W^{1,p}(U_\alpha\cap U_\beta)}
\left( \|a_\alpha\|_{L^d(U_\alpha\cap U_\beta)} + \|a_\beta\|_{L^d(U_\alpha\cap U_\beta)} \right)
  \\
&\quad + C\|h_{\alpha\beta}\|_{W^{1,p}(U_\alpha\cap U_\beta)}\left(\|a_\alpha-b_\alpha\|_{L^d(U_\alpha\cap U_\beta)} + \|a_\beta-b_\beta\|_{L^d(U_\alpha\cap U_\beta)} \right),
\end{align*}
for constants $c=c(g,G)\in [1,\infty)$ and $C=C(g,G,p,\sU)\in[1,\infty)$ and applying the continuous Sobolev embedding $W^{1,p}(U_\alpha\cap U_\beta) \subset L^{p^*}(U_\alpha\cap U_\beta) $ to obtain the last inequality. Hence, for $a_\alpha$ obeying \eqref{eq:Coulomb_gauge_connection_one-forms_Ld_small_pair} and choosing $\eps=\eps(g,G,p)\in[1,\infty)$ sufficiently small, rearrangement gives 
\begin{multline}
\label{eq:Lp_bound_df_alpha_beta} 
\|df_{\alpha\beta}\|_{L^p(U_\alpha\cap U_\beta)} \leq c\|f_{\alpha\beta}\|_{L^p(U_\alpha\cap U_\beta)}
\left( \|a_\alpha\|_{L^d(U_\alpha\cap U_\beta)} + \|a_\beta\|_{L^d(U_\alpha\cap U_\beta)} \right)
  \\
 + c\|h_{\alpha\beta}\|_{W^{1,p}(U_\alpha\cap U_\beta)}\left(\|a_\alpha-b_\alpha\|_{L^d(U_\alpha\cap U_\beta)} + \|a_\beta-b_\beta\|_{L^d(U_\alpha\cap U_\beta)} \right).
\end{multline}
We now substitute the bound \eqref{eq:Lp_bound_df_alpha_beta} into the right-hand side of \eqref{eq:Coulomb_gauge_transition_functions_W2p_bound_connection_one-forms_refined} and use the $W^{1,p}$ bounds \eqref{eq:Coulomb_gauge_transition_functions_W2p_bound} for $h_{\alpha\beta}$ (with $g_{\alpha\beta}$ and $V_\alpha$ replaced by $h_{\alpha\beta}$ and $U_\alpha$, respectively) to obtain the desired estimate \eqref{eq:Coulomb_gauge_transition_functions_W2p_bound_connection_one-forms}. Moreover, by combining the estimate \eqref{eq:Coulomb_gauge_transition_functions_W2p_bound_connection_one-forms} and the bounds \eqref{eq:Coulomb_gauge_connection_one-forms_Ld_small_pair} and \eqref{eq:Coulomb_gauge_transition_functions_Lp_close} (with $p\leq d$) we obtain the estimate \eqref{eq:Coulomb_gauge_transition_functions_W2p_small}.

If $p>d/2$, then $W^{2,p}(U;\RR) \subset C^0(U;\RR)$ is a continuous embedding of Sobolev spaces by \cite[Theorem 4.12, Part I, Case A]{AdamsFournier}, for an open subset $U \subset \RR^d$ (obeying an interior cone condition), so \eqref{eq:Coulomb_gauge_transition_functions_W2p_small} yields
\[
\max_{\alpha,\beta \in \sI}
\|g_{\alpha\beta} - h_{\alpha\beta}\|_{C^0(\bar V_\alpha\cap \bar V_\beta)} \leq C\eps,
\]
for $C = C(g,G,p,\sU,\sV) \in [1,\infty)$. We now appeal to Proposition \ref{prop:Uhlenbeck_1982Lp_3-2} to give the desired isomorphism between $P$ and $Q$ to complete the proof of Theorem \ref{thm:Uhlenbeck_Lp_bound_3-2_Sobolev_Ld_small_connection_oneforms}.
\end{proof}

\begin{rmk}[Related results due to Riv\`ere and Taubes]
\label{rmk:Related_results_Rivere_Taubes}
Theorem \ref{thm:Uhlenbeck_Lp_bound_3-2_Sobolev_Ld_small_connection_oneforms} is essentially equivalent to
Riv\`ere's \cite[Theorem IV.1]{Riviere_2002} and that in turn may be viewed as a generalization of part of Taubes' \cite[Proposition 4.5 and Lemma A.1]{TauPath} from the case of $d=4$ to arbitrary $d\geq 4$. (It is likely that \cite[Theorem IV.1]{Riviere_2002} also holds for $d=3$ and possibly even $d=2$, but that would require carefully checking that all of the results used in the proof involving Lorentz spaces when $d \geq 4$ (as implicit throughout \cite{Riviere_2002}) also hold for $d=2$ or $3$.) In \cite[Theorem IV.1]{Riviere_2002}, Riv\`ere does not explicitly state that the local sections are continuous, but this appears to be implied by the proof.

In \cite{Riviere_2002}, Rivi\`ere uses \emph{Lorentz spaces} to obtain the necessary $L^\infty$ control over transition functions in this case of borderline $W^{1,d/2}$ strong convergence of local connection one-forms in Coulomb gauge. References for key results on Lorentz spaces employed by Rivi\`ere include Brezis and Wainger \cite{Brezis_Wainger_1980}, Lorentz \cite{Lorentz_1950, Lorentz_1951}, Peetre \cite{Peetre_1966, Peetre_1969}, Stein and Weiss \cite{Stein_Weiss_introduction_fourier_analysis}, Tartar \cite{Tartar_1998, Tartar_2007}, and Grafakos \cite{Grafakos_classical_fourier_analysis} for a more recent exposition. Our proof of Theorem \ref{thm:Uhlenbeck_Lp_bound_3-2_Sobolev_Ld_small_connection_oneforms} appears simpler than those of \cite[Theorem IV.1]{Riviere_2002} or \cite[Lemma A.1]{TauPath} since it only requires standard results on Sobolev spaces \cite{AdamsFournier} and an \apriori $L^p$ estimate for the Laplace operator \cite{GilbargTrudinger}.
\end{rmk}

\begin{rmk}[Related results due to Isobe and Shevchishin]
\label{rmk:Related_results_Isobe_Shevchishin}
Isobe has shown that any two $C^0$ principal $G$-bundles that are sufficiently close to each other in the $W^{1,d}(X)$-norm are necessarily isomorphic (see \cite[Theorem 1.1 and Proposition 3.1]{Isobe_2009}); compare Shevchishin \cite[Theorem 2.6]{Shevchishin_2002} for a related result. However, the proofs of \cite[Theorem 1.1 and Proposition 3.1]{Isobe_2009} are quite involved whereas the proof of Theorem \ref{thm:Uhlenbeck_Lp_bound_3-2_Sobolev_Ld_small_connection_oneforms} is direct and the result more than adequate for our application.
\end{rmk}

\subsection{Existence of a flat connection for the critical exponent}
\label{subsec:Uhlenbeck_Chern_corollary_4-3_existence_flat_connection_critical_exponent}
In this subsection, we establish the forthcoming Theorem \ref{thm:Uhlenbeck_Chern_corollary_4-3_critical_existence} --- an extension of Theorem \ref{thm:Uhlenbeck_Chern_corollary_4-3} --- giving existence of a $C^\infty$ flat connection on a principal $G$-bundle supporting a $W^{1,q}$ connection with $L^{d/2}$-small curvature for $d \geq 3$ or $L^{s_0}$-small curvature for $d = 2$ and $s_0>1$. This will also verify Theorem \ref{mainthm:Uhlenbeck_Chern_corollary_4-3_1_lessthan_p_lessthan_d}.

Suppose temporarily that $X$ is a closed, \emph{four}-dimensional, oriented, \emph{topological} manifold and that $G$ is a compact \emph{simple} Lie group. We recall from \cite[Appendix]{Sedlacek}, \cite[Propositions A.1 and A.2]{TauSelfDual} that a topological principal $G$-bundle $P$ over $X$ is classified up to isomorphism by a cohomology class $\eta(P) \in H^2(X;\pi_1(G))$ and its \emph{first Pontrjagin class}, $p_1(P) \in H^4(X;\ZZ)$, or equivalently, \emph{first Pontrjagin degree}, $\langle p_1(P), [X] \rangle \in \ZZ$, where $[X] \in H_4(X;\ZZ)$ denotes the fundamental class of $X$. The topological invariant $\eta \in H^2(X; \pi_1(G))$ is the \emph{obstruction} to the existence of a principal $G$-bundle $P$ over $X$ with a specified Pontrjagin degrees.

In his Ph.D. thesis \cite{SedlacekThesis} and its published version \cite{Sedlacek}, Sedlacek applied the direct minimization method to the Yang--Mills energy function \eqref{eq:Yang-Mills_energy_function} on the affine space of $W^{1,q}$ connections on a smooth principal $G$-bundle $P$ over a closed, four-dimensional, smooth Riemannian manifold $(X,g)$ to prove existence of a $C^\infty$ Yang--Mills connection $A_\infty$ on a smooth principal $G$-bundle $P_\infty$ over $X$, where $\eta(P_\infty) = \eta(P)$ and $p_1(P_\infty)[X] \geq p_1(P)[X]$ (see \cite[Theorems 4.3, 5.5, and 7.1 and Corollary 5.6]{Sedlacek}). Here, $\eta(P)$ is the obstruction class (see \cite[Section 2]{Sedlacek}), $p_1(P) \in H^4(X;\ZZ)$ is the Pontrjagin class of $P$, and $p_1(P)[X] \in \ZZ$ is the Pontrjagin degree for $P$. The case $p_1(P_\infty)[X] > p_1(P)[X]$ arises due to the phenomenon of energy bubbling, as explained in \cite[Sections 5 and 7]{Sedlacek}. In his proof of \cite[Theorems 4.1 and 4.3 and Proposition 4.2]{Sedlacek}, Sedlacek considers a sequence of $C^\infty$ connections $\{A^i\}_{i=1}^\infty$ on $P$ such that
\[
\YM(A^i) \searrow m(\eta), \quad\text{as } i \to \infty,
\]
where $m(\eta) := \inf\{\YM(A):\, A$ is a $C^\infty$ connection on a smooth principal $G$-bundle $P'$ such that $\eta(P') = \eta$\} and finds a $C^\infty$ Yang--Mills connection $A_\infty$ on a smooth principal $G$-bundle $P_\infty$ with $\eta(P_\infty) = \eta(P)$ by \cite[Theorem 5.6]{Sedlacek}. If $P$ supports a $C^\infty$ connection $A$ obeying the condition \eqref{eq:mainCurvature_Ldover2_small} with $d=4$, namely
\[
\|F_A\|_{L^2(X)} \leq \eps,
\]
then the Chern--Weil representation of characteristic classes \cite{MilnorStasheff} implies that $p_1(\ad P)[X] = 0$ for small enough $\eps = \eps(g,G,k) \in (0,1]$ (where $k = p_1(\ad P)[X] \in \ZZ)$. (Arguing along these lines, Sedlacek obtains his \cite[Theorem 7.1]{Sedlacek}.) But $\YM(A_\infty) \leq \YM(A)$ and thus also $p_1(\ad P_\infty)[X] = 0$. Hence, $P$ is isomorphic to $P_\infty$ as a continuous principal $G$-bundle, at least when $G$ is simple, by the preceding remarks on their classification.

While Sedlacek confines his attention to manifolds $X$ of dimension $d = 4$, his argument employs Uhlenbeck's Theorem \ref{thm:Uhlenbeck_Lp_1-3}, which is valid for the unit ball $B \subset \RR^d$ of any dimension $d \geq 2$. As we discuss here, it is therefore not difficult to modify his proof to yield a version of his \cite[Theorem 4.3]{Sedlacek} which is also valid for $X$ of any dimension $d \geq 2$. Furthermore, that generalization to $d\geq 2$ of \cite[Theorem 4.3]{Sedlacek} from $d=4$ will yield the desired enhancement (from $p > d/2$ to $p = d/2$) of Theorem \ref{thm:Uhlenbeck_Chern_corollary_4-3} (existence of a $C^\infty$ flat connection $\Gamma$ on a principal $G$-bundle $P$ supporting a $W^{1,q}$ connection $A$ with $L^p(X)$-small curvature $F_A$).

\begin{thm}[Existence of a $C^\infty$ flat connection on a principal bundle supporting a $W^{1,q}$ connection with $L^{d/2}$-small curvature]
\label{thm:Uhlenbeck_Chern_corollary_4-3_critical_existence}
Let $(X,g)$ be a closed, smooth Riemannian manifold of dimension $d\geq 2$, and $G$ be a compact Lie group, and $s_0>1$ be a constant. Then there is a constant, $\eps=\eps(g,G,s_0) \in (0,1]$, with the following significance. If $q \in (d/2, \infty]$ and $A$ is a $W^{1,q}$ connection on a smooth principal $G$-bundle $P$ over $X$ whose curvature obeys \eqref{eq:mainCurvature_Ldover2_small}, namely
\[
\|F_A\|_{L^{s_0}(X)} \leq \eps,
\]
where $s_0 = d/2$ when $d\geq 3$ or $s_0 > 1$ when $d=2$, then there is a $C^\infty$ flat connection $\Gamma$ on $P$.
\end{thm}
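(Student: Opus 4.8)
The plan is to follow Sedlacek's direct-minimization scheme \cite{Sedlacek}, but adapted to general dimension $d \geq 2$ and to the borderline curvature hypothesis \eqref{eq:mainCurvature_Ldover2_small}, using Uhlenbeck's local Coulomb gauge theorem (Theorem \ref{thm:Uhlenbeck_Lp_1-3}) — which is valid for the unit ball in $\RR^d$ of any dimension — together with the bundle-identification result Theorem \ref{thm:Uhlenbeck_Lp_bound_3-2_Sobolev_Ld_small_connection_oneforms} just established. Since the conclusion we want (existence of a \emph{flat} connection on $P$) is topological, by Proposition \ref{prop:Kobayashi_1-2-6} it suffices to show that $P$, as a continuous principal $G$-bundle, admits a flat structure, i.e.\ is isomorphic to a bundle defined by a representation $\pi_1(X) \to G$; equivalently, that the (continuous) bundle $P$ carries \emph{constant} transition functions. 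Concretely, I would fix a finite good cover $\sU = \{U_\alpha\}$ of $X$ by geodesic balls (with refinements $U_\alpha' \Subset U_\alpha$ and $V_\alpha \Subset U_\alpha'$ as needed), take a minimizing sequence $\{A^i\}$ of $C^\infty$ connections on $P$ for $\sE$ within the isomorphism class of $P$ — the infimum is $\geq 0$ and is $\leq \sE(A) \leq \tfrac12\eps^2\vol_g(X)$ by hypothesis — and extract a weak limit using Uhlenbeck's compactness.

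The key steps, in order. First, apply Theorem \ref{thm:Uhlenbeck_Lp_1-3} (with Remark \ref{rmk:Non-flat_Riemannian_metrics} for the non-flat metric) on each ball $U_\alpha$: for $\eps$ small enough, each $A^i$ admits a gauge $\sigma_\alpha^i$ in which the local one-form $a_\alpha^i := (\sigma_\alpha^i)^*A^i$ is in Coulomb gauge, $d^{*_g}a_\alpha^i = 0$ on $U_\alpha$, with the uniform bound $\|a_\alpha^i\|_{W^{1,d/2}(U_\alpha)} \leq C\|F_{A^i}\|_{L^{d/2}(U_\alpha)} \leq C\eps$ (for $d \geq 3$; for $d=2$ one uses the $L^{s_0}$ bound and $W^{1,s_0}$-regularity instead). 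By weak compactness in $W^{1,d/2}$ and passing to a subsequence, $a_\alpha^i \rightharpoonup a_\alpha^\infty$ weakly in $W^{1,d/2}(U_\alpha)$ for each $\alpha$, with $a_\alpha^\infty$ still Coulomb and still satisfying $\|a_\alpha^\infty\|_{W^{1,d/2}(U_\alpha)} \leq C\eps$; in particular $\|a_\alpha^\infty\|_{L^d(U_\alpha)} \leq C\eps$ by the Sobolev embedding $W^{1,d/2} \hookrightarrow L^d$. Second, the transition functions $g_{\alpha\beta}^i \in W^{2,d/2}(U_\alpha\cap U_\beta; G)$ of these local gauges are uniformly bounded: this follows from Theorem \ref{thm:W2p_bound_transition_functions_Ld_small_connection_oneforms} (with $p$ chosen in $(d/2,d)$ when $d\geq 3$, $p$ close to $2$ when $d=2$), so $\|g_{\alpha\beta}^i\|_{W^{2,p}(V_\alpha\cap V_\beta)} \leq C$, hence $\|g_{\alpha\beta}^i\|_{C^0(\bar V_\alpha\cap\bar V_\beta)} \leq C$ and — by Rellich — a subsequence converges in $C^0$ (indeed in $L^p$) to some limiting cocycle $g_{\alpha\beta}^\infty$, which assembles a continuous principal $G$-bundle $Q$. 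Third, apply Theorem \ref{thm:Uhlenbeck_Lp_bound_3-2_Sobolev_Ld_small_connection_oneforms} to the pair $(A^i, A^\infty)$ — where $A^\infty$ is the $W^{1,d/2}$ connection on $Q$ assembled from the $a_\alpha^\infty$ — to conclude $P \cong Q$ as continuous bundles, once $i$ is large enough that $\|g_{\alpha\beta}^i - g_{\alpha\beta}^\infty\|_{L^p}$ and the $a$-differences are small. Fourth, and this is the crux, argue that $A^\infty$ (or the bundle $Q$) is \emph{flat}: because $\{A^i\}$ is minimizing and by the Chern--Weil / characteristic-number argument, the relevant topological invariants of $P$ vanish (for $d = 4$ and $G$ simple this is $p_1(\ad P)[X] = 0$; in general dimensions one invokes the classification of $G$-bundles up to isomorphism together with the smallness of $\sE(A)$ forcing all characteristic numbers built from $F_A$ to vanish), so $\sE(A^\infty) = 0$ on $Q$, i.e.\ $F_{A^\infty} = 0$. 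Then the transition functions $g_{\alpha\beta}^\infty$ can be taken constant (Proposition \ref{prop:Kobayashi_1-2-6}), so $Q \cong P$ admits a flat structure, and finally by Proposition \ref{prop:Kobayashi_1-2-6} again $P$ admits a $C^\infty$ flat connection $\Gamma$.

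The main obstacle I anticipate is precisely the fourth step — establishing that the weak limit has zero Yang--Mills energy, i.e.\ that the minimizing value $m$ in the isomorphism class of $P$ is actually $0$ and not merely small. In dimension $4$ for simple $G$ this is clean: $\sE(A) \geq 8\pi^2|p_1(\ad P)[X]|/c$ up to constants, and smallness of $\|F_A\|_{L^2}$ forces $p_1(\ad P)[X] = 0$, after which $m = 0$ by a standard argument and no energy can bubble off (bubbling costs a fixed quantum of energy $> m$). In general $d$, one must be careful: the right substitute for "the topological obstruction to a nonzero minimum vanishes" is that the smallness hypothesis \eqref{eq:mainCurvature_Ldover2_small}, via the classification of principal $G$-bundles and Chern--Weil theory, already pins down $P$ to lie in the isomorphism class admitting a flat connection, so that the natural lower bound for $\sE$ over that class is $0$; combined with lower semicontinuity of $\sE$ under the weak $W^{1,d/2}$ convergence of local connection forms (which holds because $F_{A^\infty}$ is a weak limit of $F_{A^i}$ in $L^{d/2}$ — here the borderline exponent is exactly where Rivi\`ere's Lorentz-space techniques or our Theorem \ref{thm:Uhlenbeck_Lp_bound_3-2_Sobolev_Ld_small_connection_oneforms} do the work of controlling the quadratic term $a \wedge a$), one gets $\sE(A^\infty) \leq \liminf \sE(A^i) = m = 0$, hence $F_{A^\infty} = 0$. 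A secondary technical point is the $d = 2$ case, where $W^{1,d/2} = W^{1,1}$ is not a reflexive-enough space for the above compactness; there one uses instead the $W^{1,s_0}$ bound from Theorem \ref{thm:Uhlenbeck_Lp_1-3} with $s_0 > 1$, which is reflexive, and everything else goes through verbatim.
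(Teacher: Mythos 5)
Your proposal takes a genuinely different route from the paper's, and that route runs into a gap that you yourself flag as the crux.

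\textbf{Comparison of approaches.} The paper does not use a minimizing sequence. It argues by contradiction: if the theorem fails, one extracts a sequence of connections with $\|F_{A^i}\|_{L^{d/2}(X)} \to 0$ on bundles that nonetheless admit no flat connection. Because the curvatures actually tend to zero (not merely stay bounded by $\eps$), the local Coulomb-gauge one-forms $a_\alpha^i$ produced by Theorem~\ref{thm:Uhlenbeck_Lp_1-3} converge \emph{strongly to zero} in $W^{1,d/2}(B_\alpha)$. Feeding this into the identity $\nabla g_{\alpha\beta}^i = g_{\alpha\beta}^i a_\beta^i + a_\alpha^i g_{\alpha\beta}^i$ forces $\nabla g_{\alpha\beta}^i \to 0$ in $L^d$ and $\nabla^2 g_{\alpha\beta}^i \to 0$ in $L^{d/2}$; Rellich--Kondrachov then yields a subsequential limit $h_{\alpha\beta}$ that is \emph{constant}. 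Theorem~\ref{thm:Uhlenbeck_Lp_bound_3-2_Sobolev_Ld_small_connection_oneforms} (the $W^{2,p}$ control on transition functions) upgrades the convergence and identifies the limiting flat bundle with $P$, giving the contradiction. No Chern--Weil computation, no characteristic-number classification, and no lower semicontinuity argument are needed. Your proposal instead fixes $P$, takes a Yang--Mills minimizing sequence, and tries to show the weak limit is flat; this is the Sedlacek strategy and it brings in exactly the machinery the paper's argument is engineered to avoid.

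\textbf{The gap.} You write that the key issue is showing $\sE(A^\infty) = 0$, and you are right to be worried: your proposed resolution does not close it. Weak lower semicontinuity gives $\sE(A^\infty) \le \liminf \sE(A^i) = m$, where $m$ is the infimum of Yang--Mills energy over connections on $P$, but you have not shown $m = 0$. Your smallness estimate only gives $m \le \sE(A)$, which is small, not zero; and ``characteristic numbers built from $F_A$ vanish'' does not by itself pin down $m = 0$, since the obstruction class $\eta(P) \in H^2(X;\pi_1(G))$ and the like are not directly visible to Chern--Weil. Deducing $m = 0$ from ``$P$ lies in the isomorphism class admitting a flat connection'' presupposes the very conclusion you are trying to establish, so the argument as written is circular. (One could imagine rescuing it by combining weak lower semicontinuity with the flat energy-gap theorem of \cite{Feehan_yangmillsenergygapflat} to force $F_{A^\infty} = 0$ from $\|F_{A^\infty}\|_{L^{d/2}}$ small, but this requires knowing $A^\infty$ is Yang--Mills and controlling $\|F_{A^\infty}\|_{L^{d/2}}$ by $\|F_{A^\infty}\|_{L^2}$, neither of which is automatic for $d\geq 5$.) Separately, note that the bound $\sE(A) \le \tfrac12\eps^2\vol_g(X)$ at the start of your argument uses $\|F_A\|_{L^2} \lesssim \|F_A\|_{L^{s_0}}$, which by H\"older requires $s_0 \ge 2$, i.e.\ $d \ge 4$; for $d=2,3$ the hypothesis $\|F_A\|_{L^{s_0}} \le \eps$ does not control the $L^2$ energy. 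The paper's contradiction setup sidesteps both of these issues at once.
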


\begin{proof}
Suppose the conclusion is false, so we may select a sequence $\{A^i\}_{i=1}^\infty$ of $W^{1,q}$ connections on $P$ such that $\|F_{A^i}\|_{L^{d/2}(X)} \to 0$ as $i \to \infty$ but $P$ does not admit a $C^\infty$ flat connection. Choose a finite cover of $X$ by geodesic balls, $B^\alpha = B_\varrho(x_\alpha) \subset X$ with centers $x_\alpha \in X$ and radius $\varrho \in (0,\Inj(X,g))$, for all $\alpha\in\sI$. With the aid of geodesic normal coordinates, one sees that the Riemannian metric, $g$, is $C^1$-close to a flat metric in a small enough open neighborhood of $x_\alpha$ (see Aubin \cite[Definition 1.24, Proposition 1.25, and Corollary 1.32]{Aubin}). Choose $\eps \in (0,1]$ small enough that we can apply Theorem \ref{thm:Uhlenbeck_Lp_1-3}. Hence, there are a sequence of $W^{2,q}$ local sections $\sigma_\alpha^i:B_\alpha \to P$, and $W^{2,q}$ transition functions $g_{\alpha\beta}^i:B_\alpha\cap B_\beta \to G$, and local connection one-forms $a_\alpha^i = (\sigma_\alpha^i)^*A^i \in W^{1,q}(B_\alpha;T^*X\otimes\fg)$, such that for all $i \in \NN$ and $\alpha,\beta,\gamma \in \sI$,
\begin{align*}
d^{*_g}a_\alpha^i &= 0 \quad\text{on } B_\alpha,
\\
\|a_\alpha^i\|_{W^{1,d/2}(B_\alpha)} &\leq c\|F_{A^i}\|_{L^{d/2}(B_\alpha)},
\\
g_{\alpha\beta}^ig_{\beta\gamma}^ig_{\gamma\alpha}^i &= \id_G \quad\text{on } B_\alpha \cap B_\beta \cap B_\gamma,
\\
%TODO Compare \cite[Equation (6.12)]{Feehan_yangmillsenergygapflat_aim}
%TODO Include primary source reference for dg_{\alpha\beta} convention & correct sign
\nabla g_{\alpha\beta}^i = dg_{\alpha\beta}^i &= g_{\alpha\beta}^ia_\beta^i + a_\alpha^i g_{\alpha\beta}^i
\quad\text{on } B_\alpha\cap B_\beta,
\end{align*}
where $c=c(g,G)\in [1,\infty)$. Hence, for all $\alpha, \beta \in \sI$ and $i \to \infty$, we have
\begin{align*}
a_\alpha^i &\to 0 \quad\text{in } W^{1,d/2}(B_\alpha;T^*X\otimes\fg),
\\
\nabla g_{\alpha\beta}^i &\to 0 \quad\text{in } L^d(B_\alpha\cap B_\beta;G),
\end{align*}
since, using the continuous embedding of Sobolev spaces, $W^{1,d/2}(B;\RR) \subset L^d(B;\RR)$ by \cite[Theorem 4.12, Part I (C)]{AdamsFournier} for any ball $B \Subset \RR^d$,
\[
\|\nabla g_{\alpha\beta}^i\|_{L^d(B_\alpha\cap B_\beta)} \leq c\left(\|a_\alpha^i\|_{W^{1,d/2}(B_\alpha)} + \|a_\beta^i\|_{W^{1,d/2}(B_\beta)}\right),
\]
and the fact that $G$ is compact, so $\|g_{\alpha\beta}^i\|_{L^\infty(B_\alpha\cap B_\beta)} \leq c_0$, where $c_0=c_0(G)$ and $c=c(g,G,\varrho)\in[1,\infty)$. Moreover, because
\[
\nabla^2 g_{\alpha\beta}^i = \nabla g_{\alpha\beta}^i\otimes a_\beta^i + g_{\alpha\beta}^i\nabla a_\beta^i + (\nabla a_\alpha^i) g_{\alpha\beta}^i + a_\alpha^i \otimes \nabla g_{\alpha\beta}^i,
\]
and $L^d(B_\alpha\cap B_\beta) \times L^d(B_\alpha\cap B_\beta) \to L^{d/2}(B_\alpha\cap B_\beta)$ is a continuous Sobolev multiplication map and $W^{1,d/2}(B;\RR) \subset L^d(B;\RR)$ is a continuous Sobolev embedding,  we see that
\[
\nabla^2 g_{\alpha\beta}^i \to 0 \quad\text{in } L^{d/2}(B_\alpha\cap B_\beta;G),
\]
for all $\alpha, \beta \in \sI$, as $i \to \infty$. In particular, the sequence $\{g_{\alpha\beta}^i\}_{i=1}^\infty$ is uniformly bounded in $W^{2,d/2}(B_\alpha\cap B_\beta;G)$ and because the Sobolev embedding, $W^{2,d/2}(B;\RR) \Subset W^{1,r}(B;\RR)$ for $r \in [1,d)$, is compact by the Rellich-Kondrachov Theorem (see \cite[Theorem 6.3]{AdamsFournier}), then, after passing to a subsequence, there is a collection of maps, $h_{\alpha\beta}: B_\alpha \cap B_\beta \to G$ such that $\nabla h_{\alpha\beta} = 0$ on $B_\alpha\cap B_\beta$ and
\[
g_{\alpha\beta}^i \to h_{\alpha\beta} \quad\text{in } W^{1,r}(B_\alpha\cap B_\beta;G), \quad i\to\infty,
\]
for all $\alpha,\beta\in\sI$. (Note that $W^{1,r}(B;\RR) \subset L^d(B;\RR)$ is a continuous Sobolev embedding when $r \in [1,d)$ obeys $r^*=dr/(d-r) \geq d$, that is, $r\geq d/2$, and so we also have that $g_{\alpha\beta}^i \to h_{\alpha\beta}$ in $L^d(B_\alpha\cap B_\beta;G)$ as $i\to\infty$.) Hence, the sequence $\{g_{\alpha\beta}^i\}_{i=1}^\infty$ of $W^{2,q}$ transition functions, defining a sequence of $W^{2,q}$ principal $G$-bundles $P_i$ isomorphic to $P$ (as continuous principal $G$-bundles), converges in $W^{1,r}(B_\alpha\cap B_\beta;G)$ to a collection of constant maps $\{h_{\alpha\beta}\}_{\alpha,\beta\in\sI}$ obeying the cocycle condition,
\[
h_{\alpha\beta} h_{\beta\gamma} h_{\gamma\alpha} = \id_G \quad\text{on } B_\alpha \cap B_\beta \cap B_\gamma, \quad\forall\, \alpha,\beta,\gamma \in \sI.
\]
Therefore, by Proposition \ref{prop:Kobayashi_1-2-6} the collection $\{h_{\alpha\beta}\}_{\alpha,\beta\in\sI}$ defines a $C^\infty$ flat connection $\Gamma$ on a $C^\infty$ principal $G$-bundle $Q$ over $X$ with local connection one-forms $b_\alpha = 0$ on $B_\alpha$, for all $\alpha \in \sI$. But Theorem \ref{thm:Uhlenbeck_Lp_bound_3-2_Sobolev_Ld_small_connection_oneforms} implies that the sequence $\{g_{\alpha\beta}^i\}_{i=1}^\infty$ actually converges to $h_{\alpha\beta}$ in $W_\loc^{2,p}(B_\alpha\cap B_\beta;G)$, for any $p \leq q$ obeying $1 < p < d$ and all $\alpha, \beta \in \sI$, and that $Q$ is isomorphic to $P$ as a continuous principal bundle. This contradicts our initial assumption and thus proves Theorem \ref{thm:Uhlenbeck_Chern_corollary_4-3_critical_existence}.
\end{proof}

\begin{rmk}[Alternative proof of convergence of transition functions]
Rather than apply the Rellich-Kondrachov Theorem in the proof of Theorem \ref{thm:Uhlenbeck_Chern_corollary_4-3_critical_existence}, we may instead observe that the difference between the average $h_{\alpha\beta}^i := (g_{\alpha\beta}^i)_{B_\alpha \cap B_\beta} \in G$ of $g_{\alpha\beta}^i$ on $B_\alpha \cap B_\beta$,
\[
(g_{\alpha\beta})_{B_\alpha \cap B_\beta} := \frac{1}{\vol(B_\alpha \cap B_\beta)}\int_{B_\alpha \cap B_\beta} g_{\alpha\beta}\,d\vol, \quad\forall\, \alpha, \beta \in \sI,
\]
and $g_{\alpha\beta}^i$ may be estimated via the Poincar\'e Inequality \cite[Theorem 5.8.1]{Evans2},
\begin{equation}
\label{eq:Poincare_inequality_g-h}
\|g_{\alpha\beta}^i-h_{\alpha\beta}^i\|_{L^p(B_\alpha \cap B_\beta)} \leq C\|dg_{\alpha\beta}^i\|_{L^p(B_\alpha \cap B_\beta)}, \quad\forall\, \alpha, \beta \in \sI, \quad i \in \NN.
\end{equation}
But $G$ is compact and thus, after passing to a subsequence and relabelling, we may suppose that the sequence $\{h_{\alpha\beta}^i\}_{i=}^\infty$ converges to a limit $h_{\alpha\beta} \in G$ and consequently the sequence $\{g_{\alpha\beta}^i\}_{i=1}^\infty$ converges in $W^{2,p}(B_\alpha \cap B_\beta;G)$ to a limit $h_{\alpha\beta}$.
\end{rmk}

\section{{\L}ojasiewicz--Simon gradient inequalities for Morse--Bott functions}
\label{sec:Lojasiewicz-Simon_gradient_inequality_abstract_functional_Morse-Bott}
Our goal in this section is to give the

\begin{proof}[Proof of Theorem \ref{mainthm:Lojasiewicz-Simon_gradient_inequality_Morse-Bott}]
We begin with several reductions that simplify the proof. First, observe that if $\sE_0:\sU\to\RR$ is defined by $\sE_0(x) := \sE(x+x_\infty)$, then $\sE_0'(0)=0$, so we may assume without loss of generality that $x_\infty=0$ and relabel $\sE_0$ as $\sE$. Second, recall that by hypothesis, $\sX = \sX_0\oplus K$ (a direct sum of Banach spaces), where $\sX_0 \subset \sX$ is a closed subspace (a Banach space) complementing $K = \Ker\sE''(0) = \Ker\sM'(0)$. Hence, by applying a $C^2$ diffeomorphism to a neighborhood of the origin in $\sX$ and possibly shrinking $\sU$, we may assume without loss of generality that $\sU \cap \Crit\sE = \sU \cap K$, recalling that $K = T_{x_\infty}\Crit\sE$ by hypothesis that $\sE$ is Morse--Bott at $x_\infty$. Third, observe that if $\sE_0:\sU\to\RR$ is defined by $\sE_0(x) := \sE(x) - \sE(0)$, then $\sE_0(0)=0$, so we may once again relabel $\sE_0$ as $\sE$ and assume that $\sE(0)=0$.

By hypothesis, $\sG = \sG_0\oplus \sK$ (a direct sum of Banach spaces), where $\sK = \Ker\sM_1(0)$ has closed complement  $\sG_0$ (a Banach space), and $\sH_0 = \Ran\sM_1(0) \subset \sH$ is a closed subspace (a Banach space). Hence, the bounded operator $\sM_1(0):\sG_0\to\sH_0$ is bijective and thus invertible by the Open Mapping Theorem. Note that $K \subset \sK$ by definition of $\sM_1(0)$ and $\sX_0 \subset \sG_0$ by hypothesis.

By shrinking $\sU$ if necessary, we may assume without loss of generality that $\sU$ is convex. By the Mean Value Theorem and the hypothesis that $\sM:\sU\to\sY$ is $C^1$ and writing $x = \xi+k \in \sU$, for $\xi \in \sU\cap\sX_0$ and $k\in \sU\cap K$ and noting that $\sM(k)=0$ for all $k \in \sU\cap K$, we have
\begin{align*}
\sM(\xi+k) &= \int_{0}^{1}\sM'(t\xi)\xi\,dt
\\
&= \sM'(0)\xi + \int_{0}^{1}(\sM'(k+t\xi)-\sM'(0))\xi\,dt
\\
&= \sM_1(0)\xi + \int_{0}^{1}(\sM_1(k+t\xi)-\sM_1(0))\xi\,dt.
\end{align*}
Thus,
\[
\|\sM(\xi+k)\|_\sH \geq \|\sM_1(0)\xi\|_\sH - \max_{t\in[0,1]}\|(\sM_1(k+t\xi)-\sM_1(0))\xi\|_\sH.
\]
Because $\xi \in \sX_0 \subset \sG_0$ and $\sM_1(0):\sG_0\to\sH_0$ is invertible, we have
\[
\|\xi\|_\sG = \|\xi\|_{\sG_0} = \|\sM_1(0)^{-1}\sM_1(0)\xi\|_{\sG_0} \leq \|\sM_1(0)^{-1}\|_{\sL(\sH_0,\sG_0)}
\|\sM_1(0)\xi\|_{\sH_0}.
\]
Therefore,
\[
\|\sM_1(0)\xi\|_\sH = \|\sM_1(0)\xi\|_{\sH_0}
\geq \frac{\|\xi\|_\sG}{\|\sM_1(0)^{-1}\|_{\sL(\sH_0,\sG_0)}} =: 2C_0\|\xi\|_\sG.
\]
On the other hand, given $\eps \in (0,1]$,
\begin{align*}
\max_{t\in[0,1]}\|(\sM_1(k+t\xi)-\sM_1(0))\xi\|_\sH
&\leq
\max_{t\in[0,1]}\|\sM_1(k+t\xi)-\sM_1(0)\|_{\sL(\sG,\sH)}\|\xi\|_\sG
\\
&\leq \eps\|\xi\|_\sG,
\end{align*}
for $\|\xi\|_\sX , \|k\|_\sX \leq \delta=\delta(\eps) \in (0,1]$, where the final inequality follows by the hypothesis of continuity of $\sM_1(x) \in \sL(\sG,\sH)$ with respect to $x \in \sU$. Consequently, choosing $\eps\leq C_0$ yields
\begin{equation}
\label{eq:Gradient_lower_bound}
\|\sM(\xi+k)\|_\sH \geq C_0\|\xi\|_\sG, \quad\forall\, \xi+k \in\sX \text{ such that } \|\xi\|_\sX, \|k\|_\sX \leq \delta.
\end{equation}
In the other direction, since $\sE(k)=0$ and $\sE'(k)=0$ for all $k \in \sU\cap K$,
\[
\sE(\xi+k) = \int_{0}^{1}\sE''(k+t\xi)\xi^2\,dt = \sE''(0)\xi^2 + \int_{0}^{1}(\sE''(k+t\xi)-\sE''(0))\xi^2\,dt.
\]
Now, $\sE''(0)\xi^2 = \langle \xi,\sM'(0)\xi\rangle_{\sX\times\sX^*} = \langle \xi,\sM_1(0)\xi\rangle_{\sG\times\sG^*}$ (using the continuous embeddings, $\sX \subset \sG$ and $\sH \subset \sG^*$, the latter with norm $\kappa \in [1,\infty)$). Therefore,
\begin{align*}
|\sE''(0)\xi^2| &= \left|\langle \xi,\sM_1(0)\xi\rangle_{\sG\times\sG^*}\right|
\\
                &\leq \|\xi\|_\sG\|\sM_1(0)\xi\|_{\sG^*}
  \\
  &\leq \kappa\|\xi\|_\sG\|\sM_1(0)\xi\|_\sH
\\
&\leq \kappa\|\sM_1(0)\|_{\sL(\sG,\sH)}\|\xi\|_\sG^2 =: \frac{1}{2}C_1\|\xi\|_\sG^2.
\end{align*}
Similarly, $\sE''(k+t\xi)\xi^2 = \langle \xi,\sM'(k+t\xi)\xi\rangle_{\sX\times\sX^*} = \langle \xi, \sM_1(k+t\xi)\xi\rangle_{\sG\times\sG^*}$ and
\begin{align*}
\left|\int_{0}^{1}(\sE''(k+t\xi)-\sE''(0))\xi^2\,dt\right|
&=
\left|\int_{0}^{1}\langle \xi,(\sM_1(k+t\xi)-\sM_1(0))\xi\rangle_{\sG\times\sG^*}\,dt\right|
\\
&\leq \|\xi\|_\sG \max_{t\in[0,1]}\|(\sM_1(k+t\xi)-\sM_1(0))\xi\|_{\sG^*}
\\
&\leq \kappa\|\xi\|_\sG \max_{t\in[0,1]}\|(\sM_1(k+t\xi)-\sM_1(0))\xi\|_\sH
\\
&\leq \kappa\|\xi\|_\sG^2 \max_{t\in[0,1]}\|\sM_1(k+t\xi)-\sM_1(0)\|_{\sL(\sG,\sH)}
\\
&\leq \kappa\eps\|\xi\|_\sG^2, \quad\text{for } \|\xi\|_\sX, \|k\|_\sX \leq \delta.
\end{align*}
Consequently, choosing $\eps \in (0,1]$ so that $\kappa\eps \leq \frac{1}{2}C_1$, we obtain
\begin{equation}
\label{eq:Energy_upper_bound}
|\sE(\xi+k)| \leq C_1\|\xi\|_\sG^2, \quad\forall\, \xi+k\in\sX \text{ such that } \|\xi\|_\sX, \|k\|_\sX \leq \delta.
\end{equation}
Combining \eqref{eq:Gradient_lower_bound} and \eqref{eq:Energy_upper_bound} yields
\[
\|\sM(x)\|_\sH \geq Z|\sE(x)|^{1/2}, \quad\forall\, x=\xi+k\in\sX \text{ such that } \|\xi\|_\sX, \|k\|_\sX \leq \delta,
\]
for $Z := C_0/\sqrt{C_1}$. This completes the proof of Theorem \ref{mainthm:Lojasiewicz-Simon_gradient_inequality_Morse-Bott}.
\end{proof}

\section{Morse--Bott property of Yang--Mills energy functions}
\label{sec:Morse-Bott_property_Yang-Mills_energy_functions}
In our articles \cite{Feehan_Maridakis_Lojasiewicz-Simon_Banach, Feehan_Maridakis_Lojasiewicz-Simon_coupled_Yang-Mills_v6} with Maridakis we only gave a few examples where the energy functions $\sE$ were known to be Morse--Bott. In this section, we provide two criteria for when Yang--Mills energy functions are Morse--Bott. Those criteria are simplest in the case of the self-dual Yang--Mills energy function near anti-self-dual connections over four-dimensional manifolds, which we discuss in Section \ref{subsec:Self-dual_Yang-Mills_energy_function_near_anti-self-dual_connections} (and where we prove Theorem \ref{mainthm:Optimal_Lojasiewicz-Simon_inequalities_self-dual_Yang-Mills_energy_function}), and in the case of the Yang--Mills energy function near flat connections over manifolds of dimension $d \geq 2$, which we discuss in Section \ref{subsec:Yang-Mills_energy_function_near_flat_connections} (and where we prove Theorem \ref{mainthm:Lojasiewicz-Simon_inequalities_Yang-Mills_energy_flat_Morse-Bott}). Finally, in Section \ref{subsec:Yang-Mills_energy_function_near_yang-mills_connections} we give the short proof of Theorem \ref{mainthm:Lojasiewicz-Simon_inequalities_Yang-Mills_energy_Morse-Bott}.

\subsection{Self-dual Yang--Mills energy function near anti-self-dual connections}
\label{subsec:Self-dual_Yang-Mills_energy_function_near_anti-self-dual_connections}
In this subsection, we assume that $(X,g)$ is a closed, \emph{four}-dimensional, smooth Riemannian manifold and that, as usual, $G$ is a compact Lie group and $P$ is a smooth principal $G$-bundle over $X$. The self-dual Yang--Mills energy function, $\YM_+:\sA(P)\to\RR$ in \eqref{eq:Self-dual_Yang-Mills_energy_function}, has Hessian map, $\YM_+'':\sA(P)\to T^*\sA(P)\times T^*\sA(P)$, given by
\begin{equation}
\label{eq:Hessian_self-dual_Yang-Mills_energy_function}
\YM_+''(A)(a,b) = (d_A^+a,d_A^+b)_{L^2(X)} + (F_A^+,a\wedge b)_{L^2(X)},
\end{equation}
for all $a, b \in T_A\sA(P) = W^{1,q}(X;T^*X\otimes\ad P)$.

\begin{lem}[Morse--Bott property of the self-dual Yang--Mills energy function at regular anti-self-dual connections]
\label{lem:Morse-Bott_property_self-dual_Yang-Mills_energy}
Let $(X,g)$ be a closed, four-dimensional, smooth Riemannian manifold, $G$ be a compact Lie group, $P$ be a smooth principal $G$-bundle over $X$, and $q>2$ be a constant. If $A$ is a $W^{1,q}$ anti-self-dual Yang--Mills connection on $P$ such that $\Coker d_A^+ = 0$, then $\YM_+:\sA(P) \to \RR$ is a Morse--Bott function at $A$ in the sense of Definition \ref{defn:Morse-Bott_function}. Moreover, if in addition the isotropy group of $A$ in $\Aut(P)$ is the center of $G$, then $\YM:\sB^*(P) \to \RR$ is a Morse--Bott function at $[A]$. 
\end{lem}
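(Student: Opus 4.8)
The plan is to verify the Morse--Bott conditions of Definition \ref{defn:Morse-Bott_function} directly, using the standard elliptic deformation theory of the anti-self-dual equation. First I would recall that the zero locus $\tilde M_+(P,g) = \{B \in \sA(P): F_B^+ = 0\}$ is, near a connection $A$ with $\Coker d_A^+ = 0$, a smooth submanifold of $\sA(P)$: this is \cite[Section 4.2.5]{DK}, applied with the understanding that the relevant map $B \mapsto F_B^+$ is a submersion at $A$ precisely because its differential $d_A^+:W^{1,q}(X;\Lambda^1\otimes\ad P) \to L^q(X;\Lambda^+\otimes\ad P)$ is surjective (the hypothesis $H_A^2 = 0$). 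The tangent space is then $T_A\tilde M_+(P,g) = \Ker d_A^+$. Since every connection in $\tilde M_+(P,g)$ is a critical point of $\sE_+$ by \eqref{eq:Gradient_self-dual_Yang-Mills_energy_function}, we have $\tilde M_+(P,g) \subset \widetilde{\Crit}(\sE_+)$, and I would next argue the reverse inclusion holds in a neighborhood of $A$: if $B$ is near $A$ and $\sE_+'(B) = 0$ then $d_B^{+,*}F_B^+ = 0$, and since $\Coker d_B^+ = 0$ for $B$ near $A$ (openness of the surjectivity condition), this forces $F_B^+ = 0$. Hence $\widetilde{\Crit}(\sE_+)$ coincides with the smooth submanifold $\tilde M_+(P,g)$ near $A$, giving the first requirement of Morse--Bott at $A$.

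Second I would verify the nondegeneracy condition \eqref{eq:Nondegenerate_critical_submanifold}, namely $T_A\widetilde{\Crit}(\sE_+) = \Ker \sE_+''(A)$. Using \eqref{eq:Hessian_self-dual_Yang-Mills_energy_function} and the fact that $F_A^+ = 0$ (since $A$ is anti-self-dual), the Hessian simplifies to
\[
\sE_+''(A)(a,b) = (d_A^+a, d_A^+b)_{L^2(X)},
\]
so that $\sE_+''(A)(a,\cdot) = 0$ if and only if $(d_A^+a, d_A^+a)_{L^2(X)} = 0$, i.e. $d_A^+a = 0$. (One must be slightly careful that the pairing is with all $b \in W^{1,q}$ and not just $b = a$; but taking $b=a$ already yields $d_A^+a = 0$, and the converse is immediate.) Thus $\Ker\sE_+''(A) = \Ker d_A^+ = T_A\tilde M_+(P,g) = T_A\widetilde{\Crit}(\sE_+)$, which is exactly \eqref{eq:Nondegenerate_critical_submanifold}. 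This establishes that $\sE_+:\sA(P)\to\RR$ is Morse--Bott at $A$.

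For the second assertion, about $\sE:\sB^*(P)\to\RR$, I would descend the picture to the quotient $\sB^*(P) = \sA^*(P)/\Aut(P)$. When the isotropy of $A$ is the center of $G$, the connection $A$ lies in the open subset $\sA^*(P)$ on which $\Aut(P)$ acts with minimal stabilizer, and $\sB^*(P)$ carries a smooth Banach manifold structure near $[A]$ by \cite[Section 4.2]{DK} or \cite[Chapter 3]{FU}; a local slice for the action is the ball in $\Ker d_A^* \subset W^{1,q}(X;\Lambda^1\otimes\ad P)$. Under this identification, $\Crit(\sE_+)\cap\sB^*(P)$ near $[A]$ is $M_+^*(P,g) = M_+(P,g)\cap\sB^*(P)$, which is smooth near $[A]$ by the slice theorem together with $H_A^2 = 0$ (see \cite[Section 4.2.5]{DK}), with tangent space the usual degree-one cohomology $H_A^1 = \Ker d_A^+ \cap \Ker d_A^*$ of the deformation complex. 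The restriction of $\sE_+''(A)$ to the slice is again $a \mapsto (d_A^+a, d_A^+a)_{L^2(X)}$, whose kernel within $\Ker d_A^*$ is exactly $H_A^1$, so \eqref{eq:Nondegenerate_critical_submanifold} holds at $[A]$ as well. I expect the main obstacle to be the careful bookkeeping in this last step: ensuring that the Hessian of the descended function $\sE_+$ on $\sB^*(P)$ really is computed by restricting \eqref{eq:Hessian_self-dual_Yang-Mills_energy_function} to the slice (which uses that $\sE_+$ is gauge invariant and $A$ is a critical point, so that second derivatives along gauge orbits drop out), and that the smooth manifold structure on $M_+^*(P,g)$ near $[A]$ from the Kuranishi-type model in \cite[Section 4.2.5]{DK} matches the critical set of the descended $\sE_+$ — the same openness argument for surjectivity of $d_B^+$ as in the first step handles the identification of $\Crit(\sE_+)$ with $M_+^*(P,g)$ near $[A]$.
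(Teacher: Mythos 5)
Your proposal follows the paper's proof essentially verbatim: both identify $\widetilde{\Crit}(\sE_+)$ with $\tilde M_+(P,g)$ near $A$ via the openness of $\Coker d_B^+ = 0$ and the gradient formula, both invoke \cite[Section 4.2.5]{DK} for smoothness of $\tilde M_+(P,g)$ at a regular point with tangent space $\Ker d_A^+$, and both reduce the Hessian at $A$ to $(d_A^+a, d_A^+b)_{L^2}$ and compute its kernel to be $\Ker d_A^+$ (the paper phrases this as $\Ker d_A^{+,*}d_A^+ = \Ker d_A^+$; your $b=a$ Cauchy--Schwarz argument is the same computation). The quotient case is handled identically by restricting to the Coulomb slice $\Ker d_A^*$, so your caution about descent to $\sB^*(P)$ is addressed the same way the paper addresses it.
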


\begin{proof}
We first consider $\YM_+:\sA(P) \to \RR$. From Donaldson and Kronheimer \cite[Section 4.2.5]{DK}, the intersection of the subvariety, $\widetilde{M}_+(P,g) = \{B \in \sA(P): F_B^+ = 0\}$, with an open ball $\widetilde{U}_A(\eps) \subset \sA(P)$ with center $A$ and small enough radius $\eps=\eps(A,g) \in (0,1]$, is a smooth manifold if $\Coker d_A^+ = 0$, since the latter property means that $0 \in L^q(X;\wedge^+(T^*X)\otimes\ad P)$ is a regular value of the map $\sA(P) \ni A \mapsto F_A^+ \in L^q(X;\wedge^+(T^*X)\otimes\ad P)$. Moreover, $\Coker d_B^+ = 0$ for small enough $\eps$ and all $B \in \widetilde{U}_A(\eps)$ since the property of $d_A^+$ being surjective is open. Hence, from the discussion in Section \ref{subsec:Optimal_Lojasiewicz-Simon_inequalities_self-dual_Yang-Mills_energy_Morse-Bott},
\[
\widetilde M_+(P,g) \cap \widetilde{U}_A(\eps) = \widetilde{\Crit}\YM_+ \cap\, \widetilde{U}_A(\eps),
\]
and $\widetilde{\Crit}\YM_+ \cap\, \widetilde{U}_A(\eps)$ is a smooth manifold. Because $F_A^+=0$, we have by \eqref{eq:Hessian_self-dual_Yang-Mills_energy_function} that
\[
\YM_+''(A)(a,b) = (d_A^+a,d_A^+b)_{L^2(X)} = (d_A^{+,*}d_A^+a,b)_{L^2(X)}.
\]
On the other hand, the tangent space to $\widetilde{\Crit}\YM_+ \subset \sA(P)$ at $A$ is given by
\[
T_A\widetilde{\Crit}\YM_+ = \Ker \left(d_A^+: W^{1,q}(X;T^*X\otimes\ad P) \to L^q(X;\wedge^+(T^*X)\otimes\ad P)\right).
\]
But then
\begin{align*}
\Ker\YM_+''(A) &= \Ker \left(d_A^{+,*}d_A^+: W^{1,q}(X;T^*X\otimes\ad P) \to W^{-1,q}(X;T^*X\otimes\ad P)\right)
\\
               &= \Ker \left(d_A^+: W^{1,q}(X;T^*X\otimes\ad P) \to L^q(X;\wedge^+(T^*X)\otimes\ad P)\right)
  \\
  &= T_A\widetilde{\Crit}\YM_+,
\end{align*}
and thus $\YM_+:\sA(P) \to \RR$ is a Morse--Bott function at $A$ by Definition \ref{defn:Morse-Bott_function}.

We now consider $\YM_+:\sB^*(P) \to \RR$. The argument here is very similar and again relies on \cite[Section 4.2]{DK} for a description of the manifold structures of $M_+(P,g)$ and $\sB(P)$. We let $U_{[A]}(\eps) \subset \sB^*(P)$ denote the open ball with center $[A]$ and radius $\eps$ and now find that
\[
M_+^*(P,g) \cap U_{[A]}(\eps) = \Crit\YM_+ \cap U_{[A]}(\eps),
\]
and $\Crit\YM_+ \cap U_{[A]}(\eps)$ is a smooth manifold. The tangent space to $\Crit\YM_+ \subset \sB^*(P)$ at $[A]$ is thus given by
\[
T_A\Crit\YM_+ = \Ker \left(d_A^+: \Ker d_A^*\cap W^{1,q}(X;T^*X\otimes\ad P) \to L^q(X;\wedge^+(T^*X)\otimes\ad P)\right).
\]
But then
\begin{align*}
  {}&\Ker\YM_+''(A)
  \\
  &= \Ker \left(d_A^{+,*}d_A^+: \Ker d_A^*\cap W^{1,q}(X;T^*X\otimes\ad P) \to \Ker d_A^*\cap W^{-1,q}(X;T^*X\otimes\ad P)\right)
\\
&=
     \Ker \left(d_A^+: \Ker d_A^*\cap W^{1,q}(X;T^*X\otimes\ad P) \to L^q(X;\wedge^+(T^*X)\otimes\ad P)\right)
  \\
  &= T_{[A]}\Crit\YM_+,
\end{align*}
and thus $\YM_+:\sB^*(P) \to \RR$ is a Morse--Bott function at $[A]$ by Definition \ref{defn:Morse-Bott_function}.
\end{proof}

The {\L}ojasiewicz--Simon gradient inequality \eqref{eq:Lojasiewicz-Simon_gradient_inequality_self-dual_Yang-Mills_energy} in
Theorem \ref{mainthm:Optimal_Lojasiewicz-Simon_inequalities_self-dual_Yang-Mills_energy_function} may be proved as a consequence of the Morse--Bott property of $\YM_+$ and Theorem \ref{mainthm:Lojasiewicz-Simon_gradient_inequality_Morse-Bott} or directly using standard arguments in Yang--Mills gauge theory. We shall provide both arguments.

\begin{proof}[Proof of Inequality \eqref{eq:Lojasiewicz-Simon_gradient_inequality_self-dual_Yang-Mills_energy} using the Morse--Bott property of $\YM_+$ under the condition \eqref{eq:Lojasiewicz-Simon_gradient_inequality_self-dual_Yang-Mills_energy_W12_neighborhood}]
We seek to apply Corollary \ref{maincor:Lojasiewicz-Simon_gradient_inequality_Morse-Bott_YisXdual} with $\sX = W_{A_\infty}^{1,2}(X;T^*X\otimes\ad P)$. Our \cite[Proposition 3.1.1]{Feehan_Maridakis_Lojasiewicz-Simon_coupled_Yang-Mills_v6}, giving analyticity of the boson coupled Yang--Mills energy function carries over \mutatis (when $d=4$ and $p=2$) for the self-dual Yang--Mills energy function and, indeed, is easier since $X$ is restricted to have dimension $d=4$ and the structure of the energy function is much simpler. Hence, the map
\[
  \YM_+:A_\infty + W_{A_\infty}^{1,2}(X;T^*X\otimes\ad P) \to \RR
\]
is at least $C^2$. Moreover, when $\Coker d_{A_\infty}^+ = 0$, we verified that $\YM_+$ has the Morse--Bott property (in the sense of Definition \ref{defn:Morse-Bott_function}) at $A_\infty$ in Lemma \ref{lem:Morse-Bott_property_self-dual_Yang-Mills_energy}. Therefore, Inequality
\eqref{eq:Lojasiewicz-Simon_gradient_inequality_self-dual_Yang-Mills_energy} now follows from Corollary \ref{maincor:Lojasiewicz-Simon_gradient_inequality_Morse-Bott_YisXdual}, with one caveat: In order to apply Corollary \ref{maincor:Lojasiewicz-Simon_gradient_inequality_Morse-Bott_YisXdual}, we must strengthen the hypothesis \eqref{eq:Lojasiewicz-Simon_gradient_inequality_self-dual_Yang-Mills_energy_L4_neighborhood} to
\begin{equation}
\label{eq:Lojasiewicz-Simon_gradient_inequality_self-dual_Yang-Mills_energy_W12_neighborhood}
\|A - A_\infty\|_{W_{A_\infty}^{1,2}(X)} < \sigma,
\end{equation}
corresponding to the {\L}ojasiewicz--Simon neigborhood condition \eqref{eq:Lojasiewicz-Simon_gradient_inequality_neighborhood_Morse-Bott_YisXdual} in Corollary \ref{maincor:Lojasiewicz-Simon_gradient_inequality_Morse-Bott_YisXdual}. 
\end{proof}

\begin{proof}[Proof of Theorem \ref{mainthm:Optimal_Lojasiewicz-Simon_inequalities_self-dual_Yang-Mills_energy_function}, including direct proof of Inequality \eqref{eq:Lojasiewicz-Simon_gradient_inequality_self-dual_Yang-Mills_energy}]
The first and final assertions regarding the Morse--Bott properties of $\YM_+:\sA(P)\to\RR$ and $\YM_+:\sB^*(P)\to\RR$ both follow from Lemma \ref{lem:Morse-Bott_property_self-dual_Yang-Mills_energy}.

In the remainder of the proof, we may assume without loss of generality that $A_\infty$ is a $C^\infty$ connection by choosing a $W^{2,q}$ gauge transformation $u \in \Aut(P)$ such that $u(A_\infty)$ is a $C^\infty$ anti-self-dual connection. To see this, we observe that the hypothesis \eqref{eq:Lojasiewicz-Simon_gradient_inequality_self-dual_Yang-Mills_energy_L4_neighborhood} is equivalent to
\[
\|u(A) - u(A_\infty)\|_{L^4(X)} < \sigma
\]
and the inequalities \eqref{eq:Lojasiewicz-Simon_distance_inequality_self-dual_Yang-Mills_energy},
\eqref{eq:Lojasiewicz-Simon_gradient_inequality_self-dual_Yang-Mills_energy} are also equivalent to their analogues with $u(A)$ and $u(A_\infty)$. The existence of $u$ follows from standard arguments; see Uhlenbeck \cite[p. 33]{UhlLp} or Wehrheim \cite[Theorem 9.4 (i)]{Wehrheim_2004}.

Because $F_{A_\infty}^+ = 0$, we have an elliptic complex \cite[Equation (4.2.26)]{DK},
\[
\Omega^0(X;\ad P) \xrightarrow{d_{A_\infty}} \Omega^1(X;\ad P) \xrightarrow{d_{A_\infty}^+} \Omega^{2,+}(X;\ad P)
\]
and an $L^2$-orthogonal Hodge decomposition \cite[Theorem 1.5.2]{Gilkey2}
\[
W_{A_\infty}^{1,q}(X;T^*X\otimes\ad P)
=
\Ker \left(d_{A_\infty}^+ + d_{A_\infty}^*\right)
\oplus
\Ran d_{A_\infty}
\oplus
\Ran d_{A_\infty}^{+,*}.
\]
Note that $\Ran d_{A_\infty} \subset \Ker d_{A_\infty}^+$. We now write $A = A_\infty + a$ for $a \in W_{A_\infty}^{1,q}(X;T^*X\otimes\ad P)$ and split $a = a_\perp + a_\parallel$, where $a_\perp, a_\parallel \in W_{A_\infty}^{1,q}(X;T^*X\otimes\ad P)$ and $a_\perp$ is $L^2$-orthogonal to $\Ker d_{A_\infty}^+$ while $a_\parallel \in \Ker d_{A_\infty}^+$.

We first consider the case where $a_\parallel = 0$ and observe that $a = a_\perp = d_{A_\infty}^{+,*}v$ for
\[
  v \in W_{A_\infty}^{2,q}(X;\wedge^2(T^*X)\otimes\ad P)
\]
by the Hodge decomposition. Because $F_{A_\infty}^+ = 0$, we have
\begin{equation}
\label{eq:ASD_near_Ainfty}
F_A^+ = F_{A_\infty + a}^+ = d_{A_\infty}^+a + (a\wedge a)^+.
\end{equation}
We claim that $a$ obeys the following \apriori estimate, with $p \in (1,\infty)$ obeying $p\leq q$ and a constant $C=C(A_\infty,g,G,p)\in[1,\infty)$:
\begin{equation}
\label{eq:Apriori_W1p_estimate_a_dAinfty+a}
\|a\|_{W_{A_\infty}^{1,p}(X)} \leq C\|d_{A_\infty}^+ a\|_{L^p(X)}.
\end{equation}
To see this, we observe that
\[
\|d_{A_\infty}^{+,*}v\|_{W_{A_\infty}^{1,p}(X)}
\leq
c\|v\|_{W_{A_\infty}^{2,p}(X)}
\leq
C\|d_{A_\infty}^+d_{A_\infty}^{+,*}v\|_{L^p(X)}
\]
for constants $c=c(g,G)$ and $C=C(A_\infty,g,G,p)$ in $[1,\infty)$. Ellipticity of the second-order operator $d_{A_\infty}^+d_{A_\infty}^{+,*}$ follows from its Bochner--Weitzenb\"ock formula \cite[Equation (6.26)]{FU}, as that implies that its principal symbol coincides with that of the covariant Laplace operator $\nabla_{A_\infty}^*\nabla_{A_\infty}$ and thus a scalar multiple (the Riemannian metric on $T^*X$) of the identity. The \apriori $W^{2,p}$ elliptic estimate for $v$ follows from \cite[Theorem 9.14]{GilbargTrudinger} or \cite[Theorem 14.60]{Feehan_yang_mills_gradient_flow_v4} for $d_{A_\infty}^+d_{A_\infty}^{+,*}$ and an argument exactly analogous to the proof of \cite[Lemma 9.17]{GilbargTrudinger} to eliminate the term $\|v\|_{L^p(X)}$ from the right-hand side. Hence, the claim \eqref{eq:Apriori_W1p_estimate_a_dAinfty+a} follows.

Because $1/p = 1/p^*+1/4$ with $p^* = 4p/(4-p) \in (4,\infty)$, we have
\[
\|(a\wedge a)^+\|_{L^p(X)} \leq c\|a\|_{L^{p^*}(X)}\|a\|_{L^4(X)}  \leq C\|a\|_{W_{A_\infty}^{1,p}(X)}\|a\|_{L^4(X)},
\]
for a constant $c=c(g,G)\in[1,\infty)$ and $C=C(g,G,p) \in [1,\infty)$. Consequently,
\begin{align*}
\|a\|_{W_{A_\infty}^{1,p}(X)} &\leq C\|d_{A_\infty}^+ a\|_{L^p(X)}
\quad\text{(by \eqref{eq:Apriori_W1p_estimate_a_dAinfty+a})}
\\
&\leq C\|F_A^+\|_{L^p(X)} + C\|(a\wedge a)^+\|_{L^p(X)}
\quad\text{(by \eqref{eq:ASD_near_Ainfty})}
\\
&\leq C\|F_A^+\|_{L^p(X)} + C\|a\|_{W_{A_\infty}^{1,p}(X)}\|a\|_{L^4(X)}.
\end{align*}
Since $\|a\|_{L^4(X)} < \sigma$ by \eqref{eq:Lojasiewicz-Simon_gradient_inequality_self-dual_Yang-Mills_energy_L4_neighborhood}, then rearrangement, for small enough $\sigma = \sigma(A_\infty,g,G,p) \in (0,1]$, yields
\begin{equation}
\label{eq:W1p_a_leq_LpFA+_when_a_perp_ker_dA+}
\|a\|_{W_{A_\infty}^{1,p}(X)} \leq C\|F_A^+\|_{L^p(X)},
\end{equation}
and thus for $p=2$ we obtain \eqref{eq:Lojasiewicz-Simon_distance_inequality_self-dual_Yang-Mills_energy}.

To prove \eqref{eq:Lojasiewicz-Simon_gradient_inequality_self-dual_Yang-Mills_energy}, write $d_A^+a = d_{A_\infty}^+a + 2(a\wedge a)^+ = F_A^+ + (a\wedge a)^+$ and note that
\begin{align*}
\|d_A^{+,*}F_A^+\|_{W_{A_\infty}^{-1,2}(X)}
&=
\sup_{b \in W_{A_\infty}^{1,2}(X;T^*X\otimes \ad P) \less \{0\}}
\frac{(d_A^{+,*}F_A^+, b)_{L^2(X)}}{\|b\|_{W_{A_\infty}^{1,2}(X)}}
\\
&\geq \frac{(d_A^{+,*}F_A^+, a)_{L^2(X)}}{\|a\|_{W_{A_\infty}^{1,2}(X)}}
=
\frac{(F_A^+, d_A^+a)_{L^2(X)}}{\|a\|_{W_{A_\infty}^{1,2}(X)}}
=
\frac{(F_A^+, F_A^+ + (a\wedge a)^+)_{L^2(X)}}{\|a\|_{W_{A_\infty}^{1,2}(X)}}.
\end{align*}
Therefore,
\begin{equation}
\label{eq:W12dual_norm_Gradient_self-dual_Yang-Mills_geq_pre-energy}
\|d_A^{+,*}F_A^+\|_{W_{A_\infty}^{-1,2}(X)}
\geq \frac{\|F_A^+\|_{L^2(X)}^2}{\|a\|_{W_{A_\infty}^{1,2}(X)}} +
\frac{(F_A^+,(a\wedge a)^+)_{L^2(X)}}{\|a\|_{W_{A_\infty}^{1,2}(X)}}.
\end{equation}
The gradient inequality \eqref{eq:Lojasiewicz-Simon_gradient_inequality_self-dual_Yang-Mills_energy} now follows. Indeed,
\[
\|(a \wedge a)^+\|_{L^2(X)} \leq c\|a\|_{L^4(X)}^2 \leq C\|a\|_{L^4(X)}\|a\|_{W_{A_\infty}^{1,2}(X)},
\]
for constants $c$ and $C$ with the same dependencies as above, and
\begin{align*}
\|d_A^{+,*}F_A^+\|_{W_{A_\infty}^{-1,2}(X)}
&\geq
\frac{\|F_A^+\|_{L^2(X)}^2}{\|a\|_{W_{A_\infty}^{1,2}(X)}}
- \frac{\|F_A^+\|_{L^2(X)}\|(a \wedge a)^+\|_{L^2(X)} }{\|a\|_{W_{A_\infty}^{1,2}(X)}}
\quad\text{(by \eqref{eq:W12dual_norm_Gradient_self-dual_Yang-Mills_geq_pre-energy})}
\\
&\geq
\frac{\|F_A^+\|_{L^2(X)}^2}{\|a\|_{W_{A_\infty}^{1,2}(X)}}
- C\frac{\|F_A^+\|_{L^2(X)}\|a\|_{L^4(X)}\|a\|_{W_{A_\infty}^{1,2}(X)} }{\|a\|_{W_{A_\infty}^{1,2}(X)}}
\\
&\geq
C^{-1}\|F_A^+\|_{L^2(X)} - C\sigma\|F_A^+\|_{L^2(X)}
\quad\text{(by \eqref{eq:Lojasiewicz-Simon_gradient_inequality_self-dual_Yang-Mills_energy_L4_neighborhood}
and \eqref{eq:Lojasiewicz-Simon_distance_inequality_self-dual_Yang-Mills_energy})}.
\end{align*}
Now choose $\sigma$ small enough that $\sigma \leq 1/(2C^2)$ to give \eqref{eq:Lojasiewicz-Simon_gradient_inequality_self-dual_Yang-Mills_energy}. This completes the proof of the optimal {\L}ojasiewicz--Simon inequalities when $a_\parallel = 0$.

When $a_\parallel \neq 0$, we instead choose a $W^{1,q}$ anti-self-dual connection $\tilde A_\infty$ on $P$ such that $A = \tilde A_\infty + \tilde a$, where $\tilde a \in W_{A_\infty}^{1,q}(X;T^*X\otimes\ad P)$ is $L^2$-orthogonal to $\Ker d_{\tilde A_\infty}^+$ and obeys $\|\tilde a\|_{L^4(X)} < 2\sigma$. The existence of $\tilde a$ follows because an open neighborhood of $A_\infty$ in $\widetilde M_+(P,g) \subset \sA(P)$ is a smooth submanifold by our hypothesis that $\Coker d_{A_\infty}^+ = 0$ and so has an $L^2$-normal tubular neighborhood in $\sA(P)$ (compare \cite[Theorem 4.5.2]{Hirsch} in the case of finite-dimensional manifolds). To see this explicitly, we note that by \cite[Section 4.2.5]{DK} for small enough $\sigma=\sigma(A_\infty,g,G) \in (0,1]$,
\[
\sU := \{b \in W_{A_\infty}^{1,q}(X;T^*X\otimes\ad P): F_{A_\infty + b}^+ = 0 \text{ and } \|b\|_{L^4(X)} < \sigma\}
\]
is an open, smooth submanifold of $W_{A_\infty}^{1,q}(X;T^*X\otimes\ad P)$, with
\[
T_b := \Ker d_{A_\infty+b}^+ \cap W_{A_\infty}^{1,q}(X;T^*X\otimes\ad P),
\]
as tangent space at $b$ and smooth normal bundle, $\sN$, with fiber over $b$,
\[
N_b := \left(\Ker d_{A_\infty+b}^+\right)^\perp \cap W_{A_\infty}^{1,q}(X;T^*X\otimes\ad P),
\]
where $(\Ker d_{A_\infty+b}^+)^\perp$ is the $L^2$-orthogonal complement of $\Ker d_{A_\infty+b}^+ \cap W_{A_\infty}^{1,q}(X;T^*X\otimes\ad P)$. The differential of the smooth map,
\[
\sN \ni (b,\eta) \mapsto b+\eta \in W_{A_\infty}^{1,q}(X;T^*X\otimes\ad P) = T_0\oplus N_0,
\]
is the identity at the origin $(0,0)$ and so the existence of an $L^2$-normal tubular neighborhood now follows from the Implicit Function Theorem for smooth maps on Banach spaces. Because $F_{\tilde A_\infty}^+ = 0$, we have
\[
F_A^+ = F_{\tilde A_\infty + \tilde a}^+ = d_{\tilde A_\infty}^+\tilde a + (\tilde a\wedge \tilde a)^+,
\]
and so the inequalities \eqref{eq:Lojasiewicz-Simon_distance_inequality_self-dual_Yang-Mills_energy} and \eqref{eq:Lojasiewicz-Simon_gradient_inequality_self-dual_Yang-Mills_energy} now follow almost exactly as before, noting that $\|\tilde A_\infty - A_\infty\|_{L^4(X)} < \sigma$. This completes the proof of Theorem \ref{mainthm:Optimal_Lojasiewicz-Simon_inequalities_self-dual_Yang-Mills_energy_function}.
\end{proof}

\subsection{Yang--Mills energy function near flat connections}
\label{subsec:Yang-Mills_energy_function_near_flat_connections}
We shall proceed by analogy with our development in Section \ref{subsec:Self-dual_Yang-Mills_energy_function_near_anti-self-dual_connections} but return to the general case where $X$ may have any dimension $d \geq 2$. If $F_A=0$, then $\YM'(A) \equiv 0$ by \eqref{eq:Gradient_Yang-Mills_energy_function} and $A$ is a critical point of $\YM:\sA(P) \to \RR$, so that
\[
\widetilde M_0(P,g) \subset \widetilde{\Crit}\YM \cap \sA(P),
\]
where $\widetilde{\Crit}\YM$ denotes the critical set of $\YM:\sA(P) \to \RR$. Conversely, suppose $A \in \widetilde{\Crit}\YM$. The Bianchi Identity \cite[Equation (2.1.21)]{DK} implies that $d_AF_A = 0$, so $F_A \in \Ker d_A \cap L^q(X;\wedge^2(T^*X)\otimes\ad P)$ and if $A$ is a \emph{regular point} of the map $\sA(P) \ni A \mapsto F_A \in L^q(X;\wedge^2(T^*X)\otimes\ad P)$ in the sense that
\[
\Ker d_A \cap L^q(X;\wedge^2(T^*X)\otimes\ad P)
=
\Ran d_A \cap L^q(X;\wedge^2(T^*X)\otimes\ad P),
\]
then \eqref{eq:Gradient_Yang-Mills_energy_function} implies that $F_A=0$ and $A \in \widetilde M_0(P,g)$. Of course, in the absence of an assumption that $A$ is regular in the preceding sense, then $A$ is (by definition) a \emph{Yang--Mills connection} as in \eqref{eq:Yang_Mills},
\[
d_A^*F_A = 0,
\]
and of course need not be flat. However, if we require in addition to \eqref{eq:Yang_Mills} that
\begin{equation}
\label{eq:Curvature_Ldover2_small}
\|F_A\|_{L^{d/2}(X)} \leq \eps,
\end{equation}
for $\eps = \eps(g,G) \in (0,1]$, then $A$ is necessarily flat by Feehan \cite[Theorem 1]{Feehan_yangmillsenergygapflat_aim}, \cite{Feehan_yangmillsenergygapflat_corrigendum} and thus we obtain the reverse inclusion,
\[
\widetilde{\Crit}\YM \cap \sA_\eps(P) \subset \widetilde M_0(P,g),
\]
where $\sA_\eps(P) := \{A \in \sA(P): A \text{ obeys } \eqref{eq:Curvature_Ldover2_small}\}$. 

If $\Gamma$ is a flat connection on $P$, then its exterior covariant derivative defines an elliptic complex,
\[
\cdots \Omega^i(X;\ad P) \xrightarrow{d_\Gamma} \Omega^{i+1}(X;\ad P) \xrightarrow{d_\Gamma} \Omega^{i+2}(X;\ad P) \cdots
\]
for $i \geq 0$, since $d_\Gamma^2 = F_\Gamma = 0$. By analogy with their definitions based on the deformation complex for an anti-self-dual connection \cite[Section 4.2.5]{DK} on a principal $G$-bundle $P$ over a four-dimensional Riemannian manifold, one defines
\[
H_\Gamma^i(X;\ad P) := \Ker d_\Gamma \cap \Omega^i(X;\ad P)/\Ran d_\Gamma, \quad i \geq 0.
\]
By analogy with the construction in \cite[Section 4.2.5]{DK} of a local Kuranishi model for an open neighborhood of a point $[A] \in M_+(P,g) \subset \sB(P)$ when $X$ has dimension four, we observe that if $H_\Gamma^2(X;\ad P) = 0$, then there is an open neighborhood $\tilde\sU_\Gamma \subset \sA(P)$ of a flat connection $\Gamma$ on $P$ such that
\[
\tilde\sU_\Gamma \cap \widetilde M_0(P) \subset \sA(P)
\]
is an open, smooth submanifold. (See Ho, Wilkin, and Wu \cite[Proposition 2.4]{Ho_Wilkin_Wu_2019} for a detailed proof.) Moreover, if the isotropy group of $\Gamma$ in $\Aut(P)$ is the center of $G$, then the quotient,
\[
\sU_\Gamma \cap M_0(P) \subset \sB^*(P),
\]
is an open, smooth submanifold. In general, the moduli space $M_0(P)$ will not be a smooth submanifold but rather a finite-dimensional, real analytic subvariety (compare \cite[p. 139]{DK}).

By gauge invariance, the Yang--Mills energy function is well-defined on the quotient, $\YM:\sB^*(P) \to \RR$ (with $q > d/2$ for $d\geq 4$ and $q=2$ for $d=2,3$), and we have the equality,
\[
M_0^*(P) = \Crit\YM \cap \sB_\eps^*(P),
\]
where $\Crit\YM$ denotes the critical set of $\YM:\sB^*(P) \to \RR$, and $\sB_\eps(P) := \{[A] \in \sB(P): A \text{ obeys } \eqref{eq:Curvature_Ldover2_small}\}$, and $\sB_\eps^*(P) := \sB_\eps(P) \cap \sB^*(P)$, and $M_0^*(P) := M_0(P) \cap \sB^*(P)$.

Given the preceding remarks, the proof of Lemma \ref{lem:Morse-Bott_property_self-dual_Yang-Mills_energy} adapts\footnote{But see Feehan \cite[Lemma A.5]{Feehan_nonlinear_uhlenbeck_estimate} for a detailed proof.} \mutatis to give the

\begin{lem}[Morse--Bott property of the Yang--Mills energy function at regular flat connections]
\label{lem:Morse-Bott_property_Yang-Mills_energy_near_flat_connection}
Let $(X,g)$ be a closed, smooth Riemannian manifold of dimension $d\geq 2$, and $G$ be a compact Lie group, $P$ be a smooth principal $G$-bundle over $X$, and $q > d/2$ for $d\geq 4$ and $q=2$ for $d=2,3$. If $\Gamma$ is a $W^{1,q}$ flat connection on $P$ such that $H_\Gamma^2(X;\ad P) = 0$, then $\YM:\sA(P) \to \RR$ is a Morse--Bott function at $\Gamma$ in the sense of Definition \ref{defn:Morse-Bott_function}. Moreover, if in addition the isotropy group of $\Gamma$ in $\Aut(P)$ is the center of $G$, then $\YM:\sB^*(P) \to \RR$ is a Morse--Bott function at $[\Gamma]$.
\end{lem}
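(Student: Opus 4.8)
\textbf{Proof proposal for Lemma~\ref{lem:Morse-Bott_property_Yang-Mills_energy_near_flat_connection}.}
The plan is to run the argument of Lemma~\ref{lem:Morse-Bott_property_self-dual_Yang-Mills_energy} almost verbatim, replacing the deformation complex of an anti-self-dual connection $d_{A_\infty}^+$ by the elliptic complex $\cdots \to \Omega^i(X;\ad P)\xrightarrow{d_\Gamma}\Omega^{i+1}(X;\ad P)\to\cdots$ attached to the flat connection $\Gamma$ (which is a complex precisely because $d_\Gamma^2=F_\Gamma=0$). First I would treat $\sE:\sA(P)\to\RR$. The key input is that the hypothesis $H_\Gamma^2=0$ means that the value $0\in\Ran d_\Gamma\cap L^q(X;\Lambda^2\otimes\ad P)$ is achieved, equivalently that $\Gamma$ is a regular point of the curvature map in the sense recalled in Section~\ref{subsec:Yang-Mills_energy_function_near_flat_connections}, so the discussion there shows that for a small enough open ball $\tilde\sU_\Gamma\subset\sA(P)$ centered at $\Gamma$, the set $\tilde\sU_\Gamma\cap\tilde M_0(P)$ is a smooth submanifold; moreover, since curvature is $L^{d/2}$-small on $\tilde\sU_\Gamma$ after possibly shrinking, Feehan's \cite[Theorem~1]{Feehan_yangmillsenergygapflat} (or the argument due to Huang \cite{Huang_2017arxiv}) gives $\tilde\sU_\Gamma\cap\widetilde{\Crit}(\sE)=\tilde\sU_\Gamma\cap\tilde M_0(P,g)$, so the critical set is that smooth submanifold near $\Gamma$.

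Next I would compute the Hessian and identify its kernel with the tangent space to the critical submanifold. Since $F_\Gamma=0$, formula~\eqref{eq:Hessian_Yang-Mills_energy_function} reduces to $\sE''(\Gamma)(a)b=(d_\Gamma a,d_\Gamma b)_{L^2(X)}=(d_\Gamma^*d_\Gamma a,b)_{L^2(X)}$, so
\[
\Ker\sE''(\Gamma)=\Ker\left(d_\Gamma^*d_\Gamma: W^{1,q}(X;\Lambda^1\otimes\ad P)\to W^{-1,q}(X;\Lambda^1\otimes\ad P)\right)=\Ker\left(d_\Gamma: W^{1,q}(X;\Lambda^1\otimes\ad P)\to L^q(X;\Lambda^2\otimes\ad P)\right),
\]
the last equality being the usual fact that $d_\Gamma^*d_\Gamma a=0$ forces $d_\Gamma a=0$ (pair with $a$ and integrate by parts; one must record that the $W^{1,q}$ regularity of $\Gamma$ and $q>d/2$ make this legitimate). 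On the other hand the tangent space to $\widetilde{\Crit}(\sE)=\tilde M_0(P,g)$ at $\Gamma$ is exactly $\Ker d_\Gamma\cap W^{1,q}(X;\Lambda^1\otimes\ad P)$, so the two coincide and $\sE$ is Morse--Bott at $\Gamma$ by Definition~\ref{defn:Morse-Bott_function}.

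For the statement on $\sB^*(P)$, assuming the isotropy group of $\Gamma$ is the center of $G$, I would repeat the computation on the Coulomb slice $\Ker d_\Gamma^*\cap W^{1,q}(X;\Lambda^1\otimes\ad P)$, exactly as in the second half of the proof of Lemma~\ref{lem:Morse-Bott_property_self-dual_Yang-Mills_energy}: $\sU_\Gamma\cap M_0^*(P)$ is a smooth submanifold of $\sB^*(P)$ by the remarks of Section~\ref{subsec:Yang-Mills_energy_function_near_flat_connections}, it equals $\Crit(\sE)\cap\sU_\Gamma$ there, and the kernel of the restricted Hessian $d_\Gamma^*d_\Gamma$ on the slice equals $\Ker d_\Gamma$ on the slice, which is the tangent space $T_{[\Gamma]}\Crit(\sE)$. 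The main obstacle is not any single estimate but making sure the elliptic regularity and Hodge-theoretic statements that are standard for $d_{A_\infty}^+$ in dimension four transfer cleanly to $d_\Gamma$ in arbitrary dimension $d\ge2$ with only $W^{1,q}$ regularity on $\Gamma$ --- in particular that $d_\Gamma^*d_\Gamma$ is elliptic (its principal symbol is that of $\nabla_\Gamma^*\nabla_\Gamma$, a scalar multiple of the identity, so this is immediate) and that the local Kuranishi-type model of Section~\ref{subsec:Yang-Mills_energy_function_near_flat_connections} genuinely gives a smooth submanifold when $H_\Gamma^2=0$; both are already granted by the preparatory discussion in that subsection, so the lemma follows by the stated \emph{mutatis mutandis} adaptation.
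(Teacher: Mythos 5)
Your proposal is correct and follows the same route as the paper, which in fact proves this lemma by asserting that the proof of Lemma \ref{lem:Morse-Bott_property_self-dual_Yang-Mills_energy} adapts \emph{mutatis mutandis} once the elliptic complex for $d_\Gamma$, the identification $\tilde\sU_\Gamma\cap\widetilde{\Crit}(\sE)=\tilde\sU_\Gamma\cap\tilde M_0(P,g)$ via the energy gap theorem, and the local Kuranishi smoothness under $H_\Gamma^2=0$ are in place from the preparatory discussion of Section~\ref{subsec:Yang-Mills_energy_function_near_flat_connections}; you have spelled out exactly that adaptation. The only slip is cosmetic: the clause ``the value $0\in\Ran d_\Gamma\cap L^q$ is achieved'' is not quite what $H_\Gamma^2=0$ says --- the correct reading is that the linearization $d_\Gamma$ of the (Bianchi-constrained) curvature map is surjective onto $\Ker d_\Gamma\cap L^q(X;\Lambda^2\otimes\ad P)$, equivalently that $0$ is a regular value --- but you immediately recover the correct formulation in the next clause, so there is no mathematical gap.
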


When $H_\Gamma^2(X;\ad P)=0$, we shall prove the {\L}ojasiewicz--Simon gradient inequality \eqref{eq:Lojasiewicz-Simon_gradient_inequality_Yang-Mills_energy_flat_local} in
Theorem \ref{mainthm:Lojasiewicz-Simon_inequalities_Yang-Mills_energy_flat_Morse-Bott} using the Morse--Bott property of $\YM$ at $\Gamma$ from Lemma \ref{lem:Morse-Bott_property_Yang-Mills_energy_near_flat_connection} and Theorem \ref{mainthm:Lojasiewicz-Simon_gradient_inequality_Morse-Bott}. We shall also give a direct proof of \eqref{eq:Lojasiewicz-Simon_gradient_inequality_Yang-Mills_energy_flat_local} using arguments in Yang--Mills gauge theory. To verify the preceding results, we outline the modifications required to the corresponding proofs in Section \ref{subsec:Self-dual_Yang-Mills_energy_function_near_anti-self-dual_connections} for $\YM_+$ when $X$ has dimension four.

\begin{proof}[Proof of Inequality \eqref{eq:Lojasiewicz-Simon_gradient_inequality_Yang-Mills_energy_flat_local} using the Morse--Bott property of $\YM$ under the condition \eqref{eq:Lojasiewicz-Simon_gradient_inequality_flat_Yang-Mills_energy_W1p_neighborhood}]
We\hfill\break shall apply Theorem \ref{mainthm:Lojasiewicz-Simon_gradient_inequality_Morse-Bott} with
\begin{multline*}
  \sX = W_\Gamma^{1,p}(X;T^*X\otimes\ad P), \quad \sG = W_\Gamma^{1,2}(X;T^*X\otimes\ad P),
  \\
  \sY = W_\Gamma^{-1,p}(X;T^*X\otimes\ad P), \quad \sH = W_\Gamma^{-1,2}(X;T^*X\otimes\ad P).
\end{multline*}
Thus, $\sH=\sG^*$ and $\sG^* \subset \sX^*$. Our \cite[Proposition 3.1.1]{Feehan_Maridakis_Lojasiewicz-Simon_coupled_Yang-Mills_v6}, giving analyticity of the boson coupled Yang--Mills energy function (when $d\geq 2$ and $p\in(d/2,\infty)$ obeys $p\geq 2$) implies that the Yang--Mills energy function 
\[
  \YM:\Gamma + W_\Gamma^{1,p}(X;T^*X\otimes\ad P) \to \RR
\]
is at least $C^2$. Moreover, the gradient map
\[
  \YM':\Gamma + W_\Gamma^{1,p}(X;T^*X\otimes\ad P) \to W_\Gamma^{-1,p}(X;T^*X\otimes\ad P)
\]
is at least $C^1$ and by \cite[Lemma 4.1.1]{Feehan_Maridakis_Lojasiewicz-Simon_coupled_Yang-Mills_v6} the Hessian operator
\[
  \YM''(A) \in \sL\left( W_\Gamma^{1,p}(X;T^*X\otimes\ad P), W_\Gamma^{-1,p}(X;T^*X\otimes\ad P) \right)
\]
for each $A \in \Gamma + W_\Gamma^{1,p}(X;T^*X\otimes\ad P)$ has a bounded extension
\[
  \sM_1(A) \in \sL\left( W_\Gamma^{1,2}(X;T^*X\otimes\ad P), W_\Gamma^{-1,2}(X;T^*X\otimes\ad P) \right)
\]
such that the following map is continuous:
\begin{multline*}
  \Gamma + W_\Gamma^{1,p}(X;T^*X\otimes\ad P) \ni A
  \\
  \mapsto \sM_1(A) \in \sL\left( W_\Gamma^{1,2}(X;T^*X\otimes\ad P), W_\Gamma^{-1,2}(X;T^*X\otimes\ad P) \right).
\end{multline*}
Since $H_\Gamma^2(X;\ad P)=0$, then $\YM$ has the Morse--Bott property (in the sense of Definition \ref{defn:Morse-Bott_function}) at $\Gamma$ by Lemma \ref{lem:Morse-Bott_property_Yang-Mills_energy_near_flat_connection}. Therefore, Inequality
\eqref{eq:Lojasiewicz-Simon_gradient_inequality_Yang-Mills_energy_flat_local} now follows from Theorem \ref{mainthm:Lojasiewicz-Simon_gradient_inequality_Morse-Bott}, with one caveat: In order to apply Theorem \ref{mainthm:Lojasiewicz-Simon_gradient_inequality_Morse-Bott}, we must strengthen the hypothesis \eqref{eq:Lojasiewicz-Simon_gradient_inequality_Yang-Mills_energy_flat_nbhd} to
\begin{equation}
\label{eq:Lojasiewicz-Simon_gradient_inequality_flat_Yang-Mills_energy_W1p_neighborhood}
\|A - \Gamma\|_{W_\Gamma^{1,p}(X)} < \sigma,
\end{equation}
corresponding to the {\L}ojasiewicz--Simon neigborhood condition \eqref{eq:Lojasiewicz-Simon_gradient_inequality_neighborhood_Morse-Bott} in Theorem \ref{mainthm:Lojasiewicz-Simon_gradient_inequality_Morse-Bott}. 
\end{proof}

\begin{proof}[Proof of Theorem \ref{mainthm:Lojasiewicz-Simon_inequalities_Yang-Mills_energy_flat_Morse-Bott}, including direct proof of Inequality \eqref{eq:Lojasiewicz-Simon_gradient_inequality_Yang-Mills_energy_flat_local}]
The first and final assertions regarding the Morse--Bott properties of $\YM:\sA(P)\to\RR$ and $\YM:\sB^*(P)\to\RR$ both follow from Lemma \ref{lem:Morse-Bott_property_Yang-Mills_energy_near_flat_connection}. For the remainder of the proof, we highlight the modifications required to the proof of Theorem \ref{mainthm:Optimal_Lojasiewicz-Simon_inequalities_self-dual_Yang-Mills_energy_function}. 

As before, we may assume without loss of generality that $\Gamma$ is a $C^\infty$ connection by choosing a $W^{2,q}$ gauge transformation $u \in \Aut(P)$ such that $u(\Gamma)$ is a $C^\infty$ flat connection. Similarly, we have an $L^2$-orthogonal Hodge decomposition \cite[Theorem 1.5.2]{Gilkey2},
\[
W_\Gamma^{1,q}(X;T^*X\otimes\ad P)
=
\Ker \left(d_\Gamma + d_\Gamma^*\right)
\oplus
\Ran d_\Gamma
\oplus
\Ran d_\Gamma^*.
\]
Note that $\Ran d_\Gamma \subset \Ker d_\Gamma$ and write $A = \Gamma + a$ for $a \in W_\Gamma^{1,q}(X;T^*X\otimes\ad P)$ and split $a = a_\perp + a_\parallel$, where $a_\perp, a_\parallel \in W_\Gamma^{1,q}(X;T^*X\otimes\ad P)$ and $a_\perp$ is $L^2$-orthogonal to $\Ker d_\Gamma$ while $a_\parallel \in \Ker d_\Gamma$.

We first consider the case where $a_\parallel = 0$ and observe that $a = a_\perp = d_\Gamma^*v$ for $v \in W_\Gamma^{2,q}(X;\wedge^2(T^*X)\otimes\ad P)$ by the Hodge decomposition. Because $F_\Gamma = 0$, we have
\begin{equation}
\label{eq:Flat_near_Gamma}
F_A = F_{\Gamma + a} = d_\Gamma a + a\wedge a.
\end{equation}
The proof of \eqref{eq:Apriori_W1p_estimate_a_dAinfty+a} carries over without change to show that $a$ obeys the following \apriori estimate, with $p \in (1,\infty)$ obeying $p\leq q$ and a constant $C=C(\Gamma,g,G,p)\in[1,\infty)$:
\begin{equation}
\label{eq:Apriori_W1p_estimate_a_dGammaa}
\|a\|_{W_\Gamma^{1,p}(X)} \leq C\|d_\Gamma a\|_{L^p(X)}.
\end{equation}
Moreover, the proof of \eqref{eq:W1p_a_leq_LpFA+_when_a_perp_ker_dA+} adapts to show that, for small enough $\sigma = \sigma(g,G,p,\Gamma) \in (0,1]$,
\begin{equation}
\label{eq:W1p_a_leq_LpFA_when_a_perp_ker_dA}
\|a\|_{W_\Gamma^{1,p}(X)} \leq C\|F_A\|_{L^p(X)},
\end{equation}
where $C = G(g,G,p,\Gamma) \in [1,\infty)$ and for $p \in (1,d)$ or $p=2$ when $d=2$ obeying $p\leq q$.

The only change in the proof of \eqref{eq:W1p_a_leq_LpFA+_when_a_perp_ker_dA+} is that we now use the continuous Sobolev multiplication $L^d(X)\times L^{p^*}(X) \to L^p(X)$ and continuous Sobolev embedding $W^{1,p}(X) \subset L^{p^*}(X)$, for $p\in(1,d)$ and $p^* = dp/(d-p) \in (d,\infty)$, to estimate,
\[
\|a\wedge a\|_{L^p(X)} \leq C\|a\|_{L^d(X)}\|a\|_{W_\Gamma^{1,p}(X)}.
\]
For $d=2$ and $p=2$, which is excluded by the preceding requirement that $p\in(1,d)$, we recall that $r_0>2$ and choose $t_0\in (2,\infty)$ by writing $1/2=1/r_0+1/t_0$ and use the continuous Sobolev multiplication $L^{r_0}(X)\times L^{t_0}(X) \to L^2(X)$ and continuous Sobolev embedding $W^{1,p}(X) \subset L^{t_0}(X)$ to estimate
\[
\|a\wedge a\|_{L^2(X)} \leq C\|a\|_{L^{r_0}(X)}\|a\|_{W_\Gamma^{1,2}(X)}.
\]
For all $d \geq 2$, we thus obtain \eqref{eq:Apriori_W1p_estimate_a_dGammaa}, now using the condition \eqref{eq:Lojasiewicz-Simon_gradient_inequality_Yang-Mills_energy_flat_nbhd} in place of the condition \eqref{eq:Lojasiewicz-Simon_gradient_inequality_self-dual_Yang-Mills_energy_L4_neighborhood} used to obtain \eqref{eq:W1p_a_leq_LpFA+_when_a_perp_ker_dA+}.

By choosing $p=2$ in \eqref{eq:W1p_a_leq_LpFA_when_a_perp_ker_dA} we obtain \eqref{eq:Lojasiewicz-Simon_distance_inequality_Yang-Mills_energy_flat_local}. To establish \eqref{eq:Lojasiewicz-Simon_gradient_inequality_Yang-Mills_energy_flat_local}, we write $d_Aa = d_\Gamma a + 2a\wedge a = F_A + a\wedge a$ and adapt the argument in the proof of Theorem \ref{mainthm:Optimal_Lojasiewicz-Simon_inequalities_self-dual_Yang-Mills_energy_function} used to prove \eqref{eq:Lojasiewicz-Simon_gradient_inequality_self-dual_Yang-Mills_energy}. The only significant change is that, for $d \geq 3$, we now use the continuous Sobolev multiplication $L^d(X)\times L^{2^*}(X) \to L^2(X)$ and continuous Sobolev embedding $W^{1,2}(X) \subset L^{2^*}(X)$ for $2^* = 2d/(d-2) \in (d,\infty)$. For $d=2$ and $r_0>2$, we use the continuous Sobolev multiplication $L^{r_0}(X)\times L^{t_0}(X) \to L^2(X)$ and continuous Sobolev embedding $W^{1,2}(X) \subset L^{t_0}(X)$, as discussed above. This completes the proof of the optimal {\L}ojasiewicz--Simon inequalities when $a_\parallel = 0$.

When $a_\parallel \neq 0$, we instead choose a $W^{1,q}$ flat connection $\tilde \Gamma$ on $P$ such that $A = \tilde \Gamma + \tilde a$, where $\tilde a \in W_\Gamma^{1,q}(X;T^*X\otimes\ad P)$ is $L^2$-orthogonal to $\Ker d_{\tilde \Gamma}$ and obeys $\|\tilde a\|_{L^{r_0}(X)} < 2\sigma$. The existence of $\tilde a$ follows because an open neighborhood of $\Gamma$ in $\widetilde M_0(P,g) \subset \sA(P)$ is a smooth submanifold by our hypothesis that $H_\Gamma^2(X;\ad P) = 0$ and so has an $L^2$-normal tubular neighborhood in $\sA(P)$, by the same argument as used in the proof of Theorem \ref{mainthm:Optimal_Lojasiewicz-Simon_inequalities_self-dual_Yang-Mills_energy_function}. In the present context, we recall that
\[
T_\Gamma := \Ker d_\Gamma \cap W_\Gamma^{1,q}(X;T^*X\otimes\ad P)
\]
is the tangent space at $\Gamma$ to $\{A \in \sA(P): F_A = 0\}$ and
\[
N_\Gamma := \left(\Ker d_\Gamma\right)^\perp \cap W_\Gamma^{1,q}(X;T^*X\otimes\ad P)
\]
is the corresponding normal space. Because $F_{\tilde \Gamma} = 0$, we have
\[
F_A = F_{\tilde \Gamma + \tilde a}^+ = d_{\tilde \Gamma}\tilde a + \tilde a\wedge \tilde a,
\]
and so the inequalities \eqref{eq:Lojasiewicz-Simon_distance_inequality_Yang-Mills_energy_flat_local} and \eqref{eq:Lojasiewicz-Simon_gradient_inequality_Yang-Mills_energy_flat_local} now follow almost exactly as before, noting that $\|\tilde \Gamma - \Gamma\|_{L^{r_0}(X)} < \sigma$. This completes the proof of Theorem \ref{mainthm:Lojasiewicz-Simon_inequalities_Yang-Mills_energy_flat_Morse-Bott}.
\end{proof}

\subsection{Yang--Mills energy function near arbitrary critical points}
\label{subsec:Yang-Mills_energy_function_near_yang-mills_connections}
It remains to give the short

\begin{proof}[Proof of Theorem \ref{mainthm:Lojasiewicz-Simon_inequalities_Yang-Mills_energy_Morse-Bott}]
The argument is virtually identical to our proof in Section \ref{subsec:Yang-Mills_energy_function_near_flat_connections} of Inequality \eqref{eq:Lojasiewicz-Simon_gradient_inequality_Yang-Mills_energy_flat_local} in Theorem \ref{mainthm:Lojasiewicz-Simon_inequalities_Yang-Mills_energy_flat_Morse-Bott} using the Morse--Bott property of $\YM$ near a regular flat connection $\Gamma$ under the condition \eqref{eq:Lojasiewicz-Simon_gradient_inequality_flat_Yang-Mills_energy_W1p_neighborhood}. The only difference is that we now assume as a hypothesis that $\YM$ has the Morse--Bott property (in the sense of Definition \ref{defn:Morse-Bott_function}) at the critical point $A_\infty$.
\end{proof}

\subsection{Yang--Mills energy function over Riemann surfaces and arbitrary critical points}
\label{subsec:Yang-Mills_energy_function_near_yang-mills_connections_over_Riemann_surfaces}
We begin with the

\begin{proof}[First proof of Theorem  \ref{mainthm:Lojasiewicz-Simon_inequalities_Yang-Mills_energy_irreducible_Riemann_surface}]
By R\r{a}de \cite[Proposition 7.2]{Rade_1992}, the {\L}ojasiewicz exponent is $1/2$ under the hypotheses of Theorem \ref{mainthm:Lojasiewicz-Simon_inequalities_Yang-Mills_energy_irreducible_Riemann_surface}. Moreover, a general result due to the author \cite[Theorem 2]{Feehan_lojasiewicz_inequality_all_dimensions_morse-bott} asserts that when an analytic function has {\L}ojasiewicz exponent equal to $1/2$ at a critical point and a Hessian operator that is suitably Fredholm at that point, then the analytic function must be Morse--Bott. Indeed, we can apply \cite[Theorem 1]{Feehan_lojasiewicz_inequality_all_dimensions_morse-bott} to the analytic function $f(a) := \YM(A_\infty + a)-\YM(A_\infty)$, where $a$ belongs to an open neighborhood $\sU$ of the origin in the Hilbert space
\[
  \sX := \Ker d_{A_\infty}^* \cap W_{A_\infty}^{1,2}(X;T^*X\otimes\ad P),
\]
noting that the continuous dual space of $\sX$ is given by
\[
  \sX^*
  =
  \left(\Ker d_{A_\infty}^* \cap W_{A_\infty}^{1,2}(X;T^*X\otimes\ad P)\right)^*
  \cong 
  \Ker d_{A_\infty}^* \cap W_{A_\infty}^{-1,2}(X;T^*X\otimes\ad P).
\]
The Hessian $f''(0) \in \Hom(\sX,\sX^*)$ is a Fredholm operator with index zero by Feehan and Maridakis \cite[Proposition 3.1.6]{Feehan_Maridakis_Lojasiewicz-Simon_coupled_Yang-Mills_v6}. According to R\r{a}de \cite[Proposition 7.2]{Rade_1992}, the energy function \eqref{eq:Yang-Mills_energy_function}, that is,
\[
  \YM:A_\infty + W_{A_\infty}^{1,2}(X;T^*X\otimes\ad P) \to \RR,
\]
obeys the optimal {\L}ojasiewicz--Simon gradient inequality \eqref{eq:Lojasiewicz-Simon_gradient_inequality_Yang-Mills_energy_local} on the open ball $B_\sigma(A_\infty)$ of radius $\sigma\in(0,1]$ around $A_\infty$ in the affine Hilbert space $A_\infty + W_{A_\infty}^{1,2}(X;T^*X\otimes\ad P)$ defined by \eqref{eq:Lojasiewicz-Simon_gradient_inequality_Yang-Mills_energy_nbhd}. Hence,
\[
  \|f'(a)\|_{\sX^*} \geq Z|f(a)|^{1/2}, \quad\text{for all } a \in \sU,
\]
where $Z\in(0,\infty)$ is a constant and thus $f:\sU\to\RR$ is a Morse--Bott function in the sense of Definition \ref{defn:Morse-Bott_function}. Thus, $\sU\cap\Crit f$ is a smooth submanifold (possibly after shrinking $\sU$) and $\Ker f''(0) = T_0\Crit f$.

We have an $L^2$-orthogonal direct sum
\[
  W_{A_\infty}^{1,2}(X;T^*X\otimes\ad P) = \Ran d_{A_\infty} \oplus \Ker d_{A_\infty}^*,
\]
where
\begin{align*}
  \Ran d_{A_\infty}
  &= \Ran \left(d_{A_\infty}: W_{A_\infty}^{2,2}(X;\ad P) \to  W_{A_\infty}^{1,2}(X;T^*X\otimes\ad P) \right),
  \\
  \Ker d_{A_\infty}^*
  &= \Ker \left(d_{A_\infty}^*: W_{A_\infty}^{1,2}(X;T^*X\otimes\ad P) \to  L^2(X;\ad P) \right),
\end{align*}  
and the $L^2$-orthogonal projection
\[
  \pi:\sA(P) \to A_\infty + \Ker d_{A_\infty}^* \cap W_{A_\infty}^{1,2}(X;T^*X\otimes\ad P)
\]
is a smooth submersion with
\[
  \pi^{-1}(A_\infty + \sU)\cap\Crit\YM = A_\infty + \pi^{-1}(\sU\cap\Crit f).
\]
In particular, $\Crit\YM \subset \sA(P)$ is a smooth submanifold near $A_\infty$ and 
\[
  T_{A_\infty}\Crit\YM
  = T_{A_\infty}(\YM')^{-1}(0))
  = (\YM''(A_\infty))^{-1}(0)
  = \Ker\YM''(A_\infty).
\]
Therefore, $\YM$ is Morse--Bott at $A_\infty$, as claimed.
\end{proof}

R\r{a}de proves the particular case of \cite[Proposition 7.2]{Rade_1992} that we just applied in \cite[Section 10]{Rade_1992}, where the main technical ingredient is his \cite[Lemma 10.1]{Rade_1992}. While there is a slight overlap between his arguments and those of Atiyah and Bott \cite{Atiyah_Bott_1983}, his proof is essentially independent of \cite{Atiyah_Bott_1983} and does \emph{not} proceed by first showing that $\YM$ is Morse--Bott at a Yang--Mills $\U(n)$ connection $A_\infty$ with trivial isotropy subgroup over a Riemann surface.

The second (and far more direct) proof below was suggested to the author by the referee. Before proceeding to the proof proper, we recall some facts about Yang--Mills connections over Riemann surfaces. If $G$ is a Lie group and $P$ is a principal $G$-bundle over a smooth manifold $X$, then there is a bijection between conjugacy classes of representations $\rho:\pi_1(X)\to G$ of the fundamental group of $X$ and gauge-equivalence classes of flat connections on $P$ (see Atiyah and Bott \cite[p. 563]{Atiyah_Bott_1983}, Donaldson and Kronheimer \cite[Proposition 2.2.3]{DK} or Kobayashi \cite[Proposition 1.2.6]{Kobayashi_differential_geometry_complex_vector_bundles}). When $X$ is a closed, connected Riemann surface with genus greater than or equal to one, solutions $A$ to the Yang--Mills equation \eqref{eq:Yang_Mills} that are not necessarily flat can be described in terms of \emph{universal central extensions} of the fundamental group
\[
  1 \xrightarrow{} \ZZ \xrightarrow{} \Gamma \xrightarrow{} \pi_1(X) \xrightarrow{} 1,
\]
and the central extension \cite[Equation (6.5)]{Atiyah_Bott_1983} obtained from $\Gamma$ by extending the center to $\RR$,
\[
  1 \xrightarrow{} \RR \xrightarrow{} \Gamma_\RR \xrightarrow{} \pi_1(X) \xrightarrow{} 1.
\]
(See also Diez and Huebschmann \cite{Diez_Huebschmann_2018}.) Given any homomorphism $\rho:\Gamma_\RR\to G$, there is an induced $G$-connection $A_\rho$ that satisfies the Yang-Mills equation \eqref{eq:Yang_Mills} and one has the

\begin{thm}
\label{thm:Atiyah_Bott_6-7}  
(See Atiyah and Bott \cite[Theorem 6.7]{Atiyah_Bott_1983}.)
Let $G$ be compact Lie group and $X$ be a closed, connected Riemann surface with genus greater than or equal to one. Then the mapping $\rho\to A_\rho$ induces a bijection between conjugacy classes of homomorphisms $\rho:\Gamma_\RR\to G$ and equivalence classes of Yang--Mills connections over $X$.  
\end{thm}

The curvature $F_A$ of a Yang--Mills connection $A$ on $P$ is given by $F_A = \xi\otimes\vol \in \Omega^2(X;\fg)$, where $\fg$ is the Lie algebra of $G$ \cite[Equation (6.10)]{Atiyah_Bott_1983} and one calls $A$ \emph{central} if $\xi$ is in the center of $\fg$. One has the

\begin{prop}
\label{prop:Atiyah_Bott_6-16}  
(See Atiyah and Bott \cite[Proposition 6.16]{Atiyah_Bott_1983}.)
Continue the hypotheses of Theorem \ref{thm:Atiyah_Bott_6-7}. Then every smooth principal $G$-bundle $P$ over $X$ has a central Yang--Mills connection. 
\end{prop}

The value of the function $\YM(A)$ at a central Yang--Mills connection $A$ is determined by the characteristic classes of $P$ and this value is the \emph{absolute minimum} for $P$ (see \cite[p. 562]{Atiyah_Bott_1983} and \cite[Section 12]{Atiyah_Bott_1983}). For $G=\U(n)$, a homomorphism $\Gamma_\RR \to \U(n)$ is a unitary representation of $\Gamma_\RR$. If a representation is irreducible, then $\xi$ is necessarily central; Yang--Mills $\U(n)$-connections for which $\xi$ is diagonal (with entries $-2\pi ik/n$) give rise to the absolute minimum $4\pi^2k^2/n$ for the Yang-Mills function, where $k$ is the first Chern number of the principal $\U(n)$-bundle $P$ \cite[pp. 564, 570]{Atiyah_Bott_1983}. We can now proceed to give the

\begin{proof}[Second proof of Theorem  \ref{mainthm:Lojasiewicz-Simon_inequalities_Yang-Mills_energy_irreducible_Riemann_surface}]
We may assume without loss of generality that $A_\infty$ is $C^\infty$-smooth since, if not, we can appeal to Wehrheim \cite[Theorem 9.4]{Wehrheim_2004} and find a $W^{2,2}$ gauge transformation $u_\infty\in\Aut(P)$ such that $u_\infty(A_\infty)$ is $C^\infty$-smooth and replace $u_\infty(A_\infty)$ by $A_\infty$. While we denote Sobolev spaces of $W^{k,2}$ sections of $\Lambda^l(T^*X)\otimes\ad P$ by $W^{k,2}(X;\Lambda^l(T^*X)\otimes\ad P)$ elsewhere in this article for integers $k\geq 0$, it will be convenient here to suppress the Sobolev notation and denote these spaces simply by $\Omega^l(\ad P)$, with the appropriate Sobolev regularity being understood.
  
From \eqref{eq:Hessian_Yang-Mills_energy_function}, the Hessian at $A_\infty$ with respect to the $L^2$ metric,
\[
  (\Hess\YM(A_\infty)a, b)_{L^2(X)} := \YM''(A_\infty)(a,b), \quad\text{for all } a,b \in \Omega^1(\ad P),
\]  
is given by (see also \cite[Proposition 4.1]{AtiyahBott})
\[
  \Hess\YM(A_\infty)a = d_{A_\infty}^*d_{A_\infty}a + \star [\star F_{A_\infty},a],
  \quad\text{for all } a \in \Omega^1(\ad P).
\]
At a Yang--Mills $\U(n)$-connection $A_\infty$ with trivial isotropy subgroup, the curvature $F_{A_\infty}$ is central as noted in the preceding paragraphs. Therefore, the term $[\star F_{A_\infty}, a]$ is zero for all $a \in \Omega^1(\ad P)$ and we obtain
\[
  \YM''(A_\infty) = d_{A_\infty}^*d_{A_\infty}.
\]
We may view the gradient of the Yang--Mills energy function, determined by the differential  \eqref{eq:Gradient_Yang-Mills_energy_function} and the $L^2$ metric by the following relation for any connection $A\in\sA(P)$,
\[
  (\grad\YM(A), a)_{L^2(X)} := \YM'(A)a, \quad\text{for all } a \in \Omega^1(\ad P),
\]  
as defining a section of a vector bundle over $\sA(P)$,
\[
  \sA(P) \ni A \mapsto s(A) := (A,\grad\YM(A)) = (A,d_A^*F_A) \in \sR(P), 
\]
where
\[
  \sR(P) := \left\{\left(A,\Ran \left(d_A^*:\Omega^2(\ad P) \to \Omega^1(\ad P)\right)\right): A \in \sA(P)\right\} \subset \sA(P)\times \Omega^1(\ad P).
\]
We claim that the section $s$ is \emph{transverse} at $A_\infty$ to the zero section $\sA(P)\times\{0\}\subset\sR(P)$, with
\[
  Ds(A_\infty)a = \Hess\YM(A_\infty)a = d_{A_\infty}^*d_{A_\infty}a \in \Ran d_{A_\infty}^* = \sR(P)_{A_\infty}.
\]
Given the claim, $\sU_{A_\infty}\cap\Crit\YM = \{A\in\sU_{A_\infty}:s(A)=0\}$ will be a smooth submanifold of $\sA(P)$ for a small enough open neighborhood $\sU_{A_\infty}$ of $\sA_\infty$ in $\sA(P)$.

We observe that $s\transv (\sA(P)\times\{0\}) \subset \sR(P)$ at $A_\infty$ if and only if
\[
  \Ran Ds(A_\infty) = \Ran \left(d_{A_\infty}^*:\Omega^2(\ad P) \to \Omega^1(\ad P)\right),
\]
that is, if and only if
\[
  \Ran \left(d_{A_\infty}^*d_{A_\infty}:\Omega^1(\ad P) \to \Omega^1(\ad P)\right)
  = \Ran \left(d_{A_\infty}^*:\Omega^2(\ad P) \to \Omega^1(\ad P)\right),
\]
which is implied by the condition
\[
  \Ran \left(d_{A_\infty}:\Omega^1(\ad P) \to \Omega^2(\ad P)\right) = \Omega^2(\ad P).
\]
Hence, abusing notation slightly\footnote{Since $(\Omega^\bullet(\ad P), d_A)$ forms an elliptic complex if and only if $A$ is a flat connection.}, it suffices to check whether
\[
  \bH_{A_\infty}^2
  := \Ran \left(d_{A_\infty}:\Omega^1(\ad P) \to \Omega^2(\ad P)\right)^\perp
  = \Ker \left(d_{A_\infty}^*:\Omega^2(\ad P) \to \Omega^1(\ad P)\right)
  = \{0\}.
\]
Suppose now that $A_\infty$ has trivial isotropy subgroup (isomorphic to the center of $G$) in $\Aut(P)$ and thus
\[
  \bH_{A_\infty}^0 := \Ker \left(d_{A_\infty}:\Omega^0(\ad P) \to \Omega^1(\ad P)\right) = \{0\}.
\]
Because $\dim X = 2$, we have $d_{A_\infty}^* = -\star d_{A_\infty}\star:\Omega^2(\ad P) \to \Omega^1(\ad P)$ by Warner \cite[Equation 6.1 (2)]{Warner} and linear isomorphisms $\star:\Omega^p(\ad P) \cong \Omega^{2-p}(\ad P)$ for $p=0,1,2$. Hence,
\begin{multline*}
  \bH_{A_\infty}^2
  = \Ker \left(\star d_{A_\infty}\star:\Omega^2(\ad P) \to \Omega^1(\ad P)\right)
  \\
  \cong \Ker \left(\star\,d_{A_\infty}:\Omega^0(\ad P) \to \Omega^1(\ad P)\right)
  \\
  = \Ker \left(d_{A_\infty}:\Omega^0(\ad P) \to \Omega^1(\ad P)\right)
  = \bH_{A_\infty}^0
  = \{0\}.
\end{multline*}
Thus, $\Crit\YM$ is a smooth submanifold of $\sA(P)$ in an open neighborhood of $A_\infty$ in $\sA(P)$ and, as in our first proof of Theorem  \ref{mainthm:Lojasiewicz-Simon_inequalities_Yang-Mills_energy_irreducible_Riemann_surface}, its tangent space at $A_\infty$ is given by
\[
  T_{A_\infty}\Crit\YM
  = T_{A_\infty}(\YM')^{-1}(0))
  = (\YM''(A_\infty))^{-1}(0)
  = \Ker\YM''(A_\infty).
\]
Therefore, $\YM$ is Morse--Bott at $A_\infty$, as claimed.
\end{proof}

%%%%%%%%%%%%%%%%%%%%%%%%%%%%%%%%%%%%%%%%%%%%%%%%%%%%%%%%%%%%%%%%%%%%%%%%%%%%%%%
%
%                                bibliography
%
%%%%%%%%%%%%%%%%%%%%%%%%%%%%%%%%%%%%%%%%%%%%%%%%%%%%%%%%%%%%%%%%%%%%%%%%%%%%%%%

%\bibliography{master,mfpde}
\bibliography{/Users/pfeehan/Dropbox/LATEX/Bibinputs/master,/Users/pfeehan/Dropbox/LATEX/Bibinputs/mfpde}
\bibliographystyle{amsplain-nodash}

\end{document}